\documentclass[11pt]{amsart}
\usepackage[utf8]{inputenc}
\usepackage{listings}
\usepackage{comment}
\usepackage{lmodern}
\usepackage{graphicx}
\usepackage{custom-template,shortcuts}

\title[The geometric step via resultants]{Polylogarithmic motivic Chabauty--Kim for $\mathbb{P}^1 \setminus \{ 0,1,\infty \}$: \\ the geometric step via resultants}
\author[Jarossay]{David Jarossay}
\address{David Jarossay, De Vinci Higher Education, De Vinci Research Center, Paris, France\vspace*{-3pt}}
\email{david.jarossay@devinci.fr}
\author[Lilienfeldt]{David T.-B. G. Lilienfeldt}
\address{David T.-B. G. Lilienfeldt, Leiden University, Mathematical Institute, The Netherlands \newline
Current address: \'Ecole Polytechnique, Centre de Math\'ematiques Laurent Schwartz, France \vspace*{-3pt}}
\email{david.lilienfeldt@polytechnique.edu}
\author[Saettone]{Francesco M. Saettone}
\address{Francesco M. Saettone, Weizmann Institute of Science, Department of Mathematics, Rehovot, Israel\vspace*{-3pt}}
\email{francesco.saettone@weizmann.ac.il}
\author[Weiss]{Ariel Weiss}
\address{Ariel Weiss, The Ohio State University, Department of Mathematics, USA \newline
Current address: Trinity College, Department of Mathematics, Hartford, CT 06106, USA\vspace*{-3pt}}
\email{ariel.weiss@trincoll.edu}
\author[Zehavi]{Sa'ar Zehavi}
\address{Sa'ar Zehavi, Ben-Gurion University of the Negev, Be'er Sheva, Israel \newline Current address: Weizmann Institute of Science, Department of Mathematics, Rehovot, Israel \vspace*{-3pt}}
\email{saar.zehavi@weizmann.ac.il}
\date{\today}
\subjclass[2020]{11D45, 11G55, 11Y50}
\keywords{Chabauty--Kim method, S-unit equation, mixed Tate motives, Tannakian formalism, Selmer scheme, unipotent fundamental group}

\setcounter{tocdepth}{1}

\begin{document}

\begin{abstract}
Given a finite set $S$ of primes, we propose a method to construct polylogarithmic motivic Chabauty--Kim functions for $\mathbb{P}^1 \setminus \{ 0,1,\infty \}$ using resultants. For a prime $p\not\in S$, the vanishing loci of the images of such functions under the $p$-adic period map contain the solutions of the $S$-unit equation. In the case $\vert S\vert=2$, we explicitly construct a non-trivial motivic Chabauty--Kim function in depth 6 of degree 18, and prove that there do not exist any other Chabauty--Kim functions with smaller depth and degree. The method, inspired by work of Dan-Cohen and the first author, enhances the geometric step algorithm developed by Corwin and Dan-Cohen, providing a more efficient approach.
\end{abstract}

\maketitle

\tableofcontents

\section{Introduction}

Let $S$ be a finite set of primes, and let $X = \P^1\setminus\{0,1,\infty\}$ denote the thrice punctured projective line over the ring of $S$-integers $\Z_S$ of $\Q$. By Siegel's theorem, the set $X(\Z_S)$ is finite. Equivalently, the $S$-unit equation $a + b = 1$ with $a, b\in \Z_S\t$ has finitely many solutions. In an effort to determine $X(\Z_S)$ explicitly, Kim \cite{kim} initiated the non-abelian Chabauty program. This $p$-adic method ($p$ a prime not in $S$) produces a nested sequence of sets
\[
X(\Z_p)\supseteq X(\Z_p)^{\mathrm{Kim}}_{S,1} \supseteq X(\Z_p)^{\mathrm{Kim}}_{S,2} \supseteq \cdots \supseteq X(\Z_p)^{\mathrm{Kim}}_{S,d} \supseteq \cdots \supseteq X(\Z_S)
\]
that become finite for $d\gg 0$, and Kim conjectured that $X(\Z_S)=X(\Z_p)^{\mathrm{Kim}}_{S,d}$ for $d$ sufficiently large. In particular, computing these sets gives a way to explicitly determine $X(\Z_S)$. 

The set $X(\Z_p)^{\mathrm{Kim}}_{S,d}$, called the \emph{Chabauty--Kim locus} in depth $d$, is defined as the vanishing locus of certain polynomials in $p$-adic multiple polylogarithms. Recently, in \cites{CDC1, CDC2}, Corwin and Dan-Cohen proposed a motivic version of Kim's method for the curve $X$ that only requires the use of $p$-adic single polylogarithms, rather than multiple polylogarithms. Their method produces a nested sequence of sets
\[
X(\Z_p)\supseteq X(\Z_p)^{\mathrm{PL}}_{S,1} \supseteq X(\Z_p)^{\mathrm{PL}}_{S,2} \supseteq \cdots \supseteq X(\Z_p)^{\mathrm{PL}}_{S,d} \supseteq \cdots \supseteq X(\Z_S),
\] 
satisfying $X(\Z_p)^{\mathrm{Kim}}_{S,d} \subseteq (X(\Z_p)^{\mathrm{PL}}_{S,d})^{S_3}$, where the latter denotes the largest $S_3$-stable subset of $X(\Z_p)^{\mathrm{PL}}_{S,d}$. For $d$ sufficiently large, Corwin--Dan-Cohen \cite{CDC1}*{Conj.~2.32} conjectured the equality $X(\Z_S)=(X(\Z_p)^{\mathrm{PL}}_{S,d})^{S_3}$, which implies Kim's conjecture. 

In practice, it is difficult to compute the sets $X(\Z_p)^{\mathrm{Kim}}_{S,d}$ or $X(\Z_p)^{\mathrm{PL}}_{S,d}$: so far, they have only been computed when $\vert S\vert=0$ \cite{BDCKW} and $\vert S\vert=1$ with $d\leq 4$ \cites{CDC1,DCW1}. 

The main advantage of the motivic approach of \cites{CDC1, CDC2} is the separation of the computation of $X(\Z_p)^{\mathrm{PL}}_{S,d}$ into two steps:
\begin{itemize}[leftmargin=*]
\item {\it Geometric step}: find non-trivial polylogarithmic motivic Chabauty--Kim functions in depth $\leq d$.
These functions (\Cref{def:I}) are elements of a weighted polynomial ring $\O(U_S)[\log^{\fu}, \Li^{\fu}_1,\ldots,\Li^{\fu}_d]$, which vanish under a certain cocyle map $\theta^\#$ (\Cref{theta}). Here:
\begin{itemize}
    \item the variables are the unipotent de Rham polylogarithms $\log^{\fu}$ and $\Li_n^{\fu}$ for $n\geq 1$ (see \Cref{s:dRfund});
    \item the coefficient ring $\O(U_S)$ is the coordinate ring of the pro-unipotent mixed Tate fundamental group over $\Z_S$ (see \Cref{s:proT});
    \item the variable $\log^{\fu}$ is assigned degree $1$ and each $\Li^{\fu}_n$ is assigned degree $n$. 
\end{itemize}
This step depends on the cardinality of $S$, but not on the specific primes contained in $S$, nor on the prime $p$.
\item {\it Arithmetic step}: determine the images of the functions found in the geometric step under the $p$-adic period map, in order to obtain $p$-adic Chabauty--Kim functions. \footnote{Can be checked to be non-trivial in practice and are guaranteed to be non-trivial if one assumes the $p$-adic Period Conjecture \cite[Conj. 2.25]{CDC1}.} This step depends on the prime $p$ and the specific primes contained in $S$. 
\end{itemize}
In this paper, we focus on the geometric step and propose a method for solving it for any set $S$ based on the use of resultants. In the case $\vert S\vert=2$, we carry out this resultant method explicitly:
\begin{theorem}\label{thm:intro}
    Assume that $\vert S\vert=2$. 
    \begin{enumerate}
    \item There do not exist any motivic Chabauty--Kim functions of degree less than $18$. 
    \item The lowest depth at which a motivic Chabauty--Kim function exists is $6$. 
    \item There exists a non-trivial motivic Chabauty--Kim function $F^{\vert 2\vert}_{6,18}$ of depth $6$ and degree $18$ (see \Cref{def:F618}). Moreover, any other motivic Chabauty--Kim function of depth less than $19$ and degree $18$ is a multiple of $F^{\vert 2\vert}_{6,18}$ by an element of $\oh(U_S)$. 
    \end{enumerate}
\end{theorem}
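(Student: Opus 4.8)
The plan is to reduce the statement to a controlled piece of linear algebra over $\oh(U_S)$, to produce the distinguished function as a resultant, and to read off minimality and uniqueness from the accompanying rank computations. The first step is to make the cocycle map $\theta^\#$ completely explicit for $\vert S\vert=2$. In this case the pro-unipotent group $U_S$ has Lie algebra free on two generators in weight $1$ and one generator in each odd weight $\geq 3$ --- this follows from the vanishing of $\operatorname{Ext}^2$ in the category of mixed Tate motives over $\Z_S$ and from $\dim\operatorname{Ext}^1(\Q(0),\Q(n))=2,0,1,0,1,\dots$ for $n=1,2,3,\dots$ --- so $\oh(U_S)$ is an explicit graded (shuffle) Hopf algebra whose graded pieces one can write out through weight $18$. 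Feeding this into the coaction formula for the motivic polylogarithms, I would write down $\theta^\#(\log^{\fu})$ and $\theta^\#(\Li^{\fu}_n)$ for $n\leq 6$ in explicit coordinates. Because $\theta^\#$ is linear over $\oh(U_S)$, for each bounded degree $N$ and depth $d$ the requirement that $F=\sum_{\mathbf a}c_{\mathbf a}m_{\mathbf a}$, with $m_{\mathbf a}$ running over the monomials of weighted degree $N$ in $\log^{\fu},\Li^{\fu}_1,\dots,\Li^{\fu}_d$ and $c_{\mathbf a}\in\oh(U_S)$, be a motivic Chabauty--Kim function becomes a finite system of $\oh(U_S)$-linear equations in the $c_{\mathbf a}$; in this way the $\oh(U_S)$-module of motivic Chabauty--Kim functions of degree $\leq N$ and depth $\leq d$ is rendered computable.

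For parts (1) and (2) I would show that this module vanishes in the stated ranges: for all degrees $<18$, and for all degrees once the depth is $\leq 5$. The structural point the resultant reformulation is meant to exploit is that $\theta^\#$ is ``as injective as possible'' on these variables --- no nonzero $\oh(U_S)$-linear relation among the images of the degree-$N$ monomials occurs when $N<18$, and none at all among $\theta^\#(\log^{\fu}),\theta^\#(\Li^{\fu}_1),\dots,\theta^\#(\Li^{\fu}_5)$ in any degree. Rather than a brute-force rank computation, I would argue that the natural source of relations is the resultant of two one-variable polynomials encoding the cocycle equations, and that a weight-and-depth bookkeeping forces any such resultant to have degree $\geq 18$ in the polylogarithm variables and to be unobtainable from data of depth $\leq 5$; the finitely many remaining possibilities in degree $<18$ are then a verifiable computation in the explicit model of $\oh(U_S)$, which the $S_3$-symmetry helps to shrink.

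For part (3) I would run the resultant construction, following the strategy of Dan-Cohen and the first author: one organises the cocycle equations as the simultaneous vanishing of two explicit polynomials $P,Q$ in one auxiliary variable with coefficients in $\oh(U_S)[\log^{\fu},\Li^{\fu}_1,\dots,\Li^{\fu}_6]$, arranged so that $\theta^\#$, applied to the coefficients, sends $P,Q$ to polynomials $\theta^\#(P),\theta^\#(Q)$ with a common root, and sets $F^{\vert 2\vert}_{6,18}:=\operatorname{Res}(P,Q)$. This is a motivic Chabauty--Kim function because $\theta^\#(\operatorname{Res}(P,Q))=\operatorname{Res}(\theta^\#P,\theta^\#Q)=0$ by the elimination property of the resultant; its degree ($18$) and depth ($6$) are read off from those of $P$ and $Q$, and its non-triviality is exactly the independence statement above, so with parts (1)--(2) one obtains existence and minimality. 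For uniqueness I would compute that the module of motivic Chabauty--Kim functions of degree $\leq 18$ and depth $\leq 18$ has rank $1$ over the domain $\oh(U_S)$, and observe that $F^{\vert 2\vert}_{6,18}$ has a coefficient in $\Q^{\times}$, which one reads off from the resultant; then $\oh(U_S)\cdot F^{\vert 2\vert}_{6,18}$ is a direct summand of the ambient free $\oh(U_S)$-module, and a torsion-free rank-one module containing a saturated free rank-one submodule equals it, so every motivic Chabauty--Kim function of degree $18$ and depth less than $19$ is an $\oh(U_S)$-multiple of $F^{\vert 2\vert}_{6,18}$.

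The step I expect to be the main obstacle is the non-existence in (1) and (2): obtaining a correct and compact enough description of $\theta^\#$ on $\log^{\fu},\Li^{\fu}_1,\dots,\Li^{\fu}_6$ and of $\oh(U_S)$ through weight $18$, and then controlling the relation module of the images sharply enough to certify simultaneously that it vanishes below degree $18$, vanishes entirely in depth $\leq 5$, and is one-dimensional over $\oh(U_S)$ in degree $18$. Reorganising the search around resultants is precisely what is meant to keep the size of that computation manageable.
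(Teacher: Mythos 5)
There is a genuine gap, concentrated in the non-existence claims and in the bookkeeping for the resultant.

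For parts (1) and (2) (and for the uniqueness half of (3)) you propose to control the relation module of $\theta^{\#}(\log^{\fu}),\theta^{\#}(\Li_1^{\fu}),\dots$ by a ``verifiable computation in the explicit model of $\oh(U_S)$'', organised around the heuristic that relations come from resultants. Neither half of this closes. First, nothing guarantees that every element of $\ker\theta^{\#}_{d,v}$ arises from a resultant, so weight-and-depth bookkeeping on resultants cannot rule out other relations. Second, the residual ``finitely many possibilities'' are not a feasible computation: $\theta^{\#}_{d,v}$ is a matrix over $\oh(U_S)_{\le d}\cong\Q[X_1,\dots,X_n]$ (already $4183\times 996$ over a $30$-variable polynomial ring for $(d,v)=(6,18)$), and computing its kernel directly over the polynomial ring is out of reach. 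The idea you are missing is the specialisation bound (\Cref{prop:guestimate}): for \emph{any} integer point $x$, $\mathrm{rank}_{\oh(U_S)_{\le d}}\ker M(\theta^{\#}_{d,v})\le\dim_{\Q}\ker M(\theta^{\#}_{d,v})(x)$, because linear dependences persist under specialisation. A single numerical kernel computation over $\Q$ returning $0$ at $(d,v)=(16,17)$ is then a rigorous certificate of non-existence for all $v\le 17$ (lower degrees embed into degree $17$ via multiplication by powers of $\log^{\fu}$, and $\Li_{17}^{\fu}$ is excluded separately), and the output $1$ at $(17,18)$ pins the kernel rank at one, which is what the uniqueness statement rests on. Without this reduction your plan for (1), (2) and for uniqueness does not terminate; note also that your claimed injectivity of $\theta^{\#}$ on depth-$5$ data ``in any degree'' is an infinite family of statements and the proposal gives no finite certificate for it.

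In part (3) the construction is the right one, but two details are wrong as stated. The resultant $\Res(\nu_4(X),\nu_6(X))$ is homogeneous of polylogarithmic degree $26$ (the degree of $a_3^4b_7^2$ in the Sylvester determinant), not $18$: the degree-$18$ function only appears after one observes that every coefficient of $\nu_4$ and $\nu_6$ is a multiple of $\log^{\fu}$ and that the leading coefficients are multiples of $F^{\vert 1\vert}_{2,2}$, and then divides the resultant by $(\log^{\fu})^{6}F^{\vert 1\vert}_{2,2}$. And non-triviality cannot be ``exactly the independence statement above'': injectivity of $\theta^{\#}$ in degrees below $18$ says nothing about whether the degree-$26$ polynomial $\Res(\nu_4,\nu_6)$ vanishes identically in $\oh(U_S)[\log^{\fu},\dots,\Li_6^{\fu}]$. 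One needs a direct argument, e.g.\ that the monomial $(\Li_6^{\fu})^2$ occurs only in the diagonal term $a_3^4b_7^2$ of the Sylvester determinant, with coefficient a visibly non-zero element of $\oh(U_S)$. Your observation that uniqueness over $\oh(U_S)$ (rather than over its fraction field) requires $F^{\vert 2\vert}_{6,18}$ to be primitive is a good one, but it too must be checked on the explicit expression.
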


This result constitutes the first progress towards explicit Chabauty--Kim in depth $> 4$ for $\vert S\vert\geq 2$. 

\begin{remark}
The cocycle map $\theta^\sharp$ can be viewed as a morphism of graded vector spaces, and as the depth $d$ and the degree $v$ grow, the dimension of the relevant graded piece of the domain grows faster than that of the corresponding graded piece of the codomain (\Cref{prop:dpl-dphi}); we tabulate these dimensions in \Cref{tab:s=1}. In particular, when $d$ and $v$ are large enough, the existence of a non-trivial motivic Chabauty--Kim function is guaranteed by a simple dimension argument.
However, the non-trivial motivic Chabauty--Kim function $F^{\vert 2\vert}_{6,18}$ of \Cref{thm:intro} does not fit into this regime: the depth $6$, degree $18$ part of the domain has dimension $996$, while the corresponding part of the codomain has dimension $4183$. 
Hence, the mere existence of $F^{\vert 2\vert}_{6,18}$ is surprising! 
At least when $|S|=2$, our construction can produce multiple distinct non-trivial motivic Chabauty--Kim functions whose existence cannot be explained by naive dimension estimates. It would be interesting to see what circumstances could give an a priori justification for their existence. These observations offer hints of new phenomena in Chabauty--Kim theory that are not visible when $\vert S\vert=1$ (see Section \ref{sec:s=1}).
\end{remark}

\begin{remark}
    The case $\vert S\vert=2$ and $d=5$ is the first instance for which the Goncharov subalgebra $\oh^G(U_S)$ (see \cite[\S 2.3.2]{CDC2}) is strictly contained in $\oh(U_S)$. While the coefficients of $F^{\vert 2\vert}_{6,18}$, which are rational linear combinations of the generators of the shuffle algebra $\oh(U_S)$, are guaranteed to lie in $\oh^G(U_S)$ by \cite[Prop.~2.3.5]{CDC2}, the individual terms of these linear combinations do not belong to $\oh^G(U_S)$ in general. According to Corwin--Dan-Cohen's integral strengthening of Goncharov's depth-$1$ conjecture \cite[Conj.~2.3.6]{CDC2}, the coefficients of $F^{\vert 2\vert}_{6,18}$ should therefore be expressible as rational linear combinations of {\it single} polylogarithmic values (possibly requiring enlarging $S$), even though the individual terms might require {\it multiple} polylogarithmic values in order to carry out the arithmetic step. In the future, it would be interesting to observe these phenomena in the arithmetic step for $F^{\vert 2\vert}_{6,18}$.
\end{remark}

\subsection{Polylogarithmic motivic Chabauty--Kim}

The method of \cite{CDC1} rests on the following commutative diagram of sets:
\begin{equation}\label{diagintro}
\begin{tikzcd}[bend angle = 80]
  X(\Z_S) \arrow[hook]{r} \arrow{d}{\kappa} & X(\Z_p) \arrow{d}{\kappa_p} \\
  \Sel_S^{\PL}(\Q) \arrow{r}{\loc} & \Pi^{\PL}(\Q_p).
\end{tikzcd}
\end{equation}

We refer to \Cref{s:diag} for a detailed description of this diagram. In this context:
\begin{itemize}
    \item $\Pi^{\PL}$ denotes the polylogarithmic pro-unipotent de Rham fundamental group of $X$ (see \Cref{s:dRfund});
    \item the Selmer scheme is defined as $\Sel_S^{\PL}=\Hom_{\G_m}(U_S, \Pi^{\PL})$, where $U_S$ denotes the pro-unipotent mixed Tate fundamental group over $\Z_S$ (see \Cref{s:proT});
    \item the map $\kappa$ is the (unipotent) motivic Kummer map, which is a map of sets;
    \item the map $\kappa_p$ is the local (unipotent) Kummer map, which is a $p$-adic locally analytic map;
    \item the map $\loc$ evaluates a homomorphism at the $p$-adic period map $\per_p\in U_S(\Q_p)$. The map $\loc$ is a map of $\Q_p$-schemes restricted to $\Q$-points.
\end{itemize}

The Selmer scheme comes equipped with a universal evaluation map of $U_S$-schemes 
\begin{equation}\label{intro:ev}
\ev \colon \Sel_S^{\PL} \times_{\Q}\, U_S \to \Pi^{\PL}\times_{\Q} U_S.
\end{equation}
Let $K$ denote the function field of $U_S$ and let $\ev_K$ be the pull-back of $\ev$ along $\spec(K)\to U_S$. Given an integer $d\geq 1$, define 
\[
\mathcal{I}^{\PL}_{S,d}:=\O(\Pi^{\PL}_{\geq -d}\times (U_S)_{\geq -d})\cap \ker(\ev_K^{\#}),
\]
where we used the notation of \cite[bottom p. 1869]{CDC1} for depth $d$ quotient pro-unipotent groups.
Via pull-back by the $p$-adic period map, any function $f\in \mathcal{I}^{\PL}_{S,d}$ induces a $p$-adic Chabauty--Kim function (see \eqref{pCK} for the precise definition)
\[
f\vert_{X(\Z_p)} \colon X(\Z_p) \xrightarrow{\kappa} \Pi^{\PL}(\Q_p) \xrightarrow{f_{\Q_p}} \Q_p.
\]
The {\it polylogarithmic motivic Chabauty--Kim locus} in depth $d$ is then defined as
\[X(\Z_p)^{\PL}_{S, d} := \set{z\in X(\Z_p) : f|
_{X(\Z_p)}(z) = 0,\ \ \forall f\in \mathcal{I}^{\PL}_{S,d}}.\]
The geometric step amounts to finding non-zero elements of $\mathcal{I}^{\PL}_{S,d}$.

\subsection{Methods}

In \cite[\S 3.3]{CDC1}, Corwin and Dan-Cohen describe an affine model for the Selmer scheme $\Sel^{\PL}_S$ in terms of non-canonical coordinates. In this model, the map \eqref{intro:ev} is identified with an explicit map (Definition \ref{theta})
\begin{equation}\label{intro:theta}
    \theta^{\#} \: \oh(U_S)[\log^{\fu}, \Li_1^{\fu}, \Li_2^{\fu}, \ldots] \to \oh(U_S)[\Phi]
\end{equation}
of graded algebras over the shuffle algebra $\O(U_S)$, whose kernel is exactly the space of motivic Chabauty--Kim functions. Restricting to a fixed depth $d$ and degree $v$, there is a corresponding $\O(U_S)_{\le d}$-module map (see Definition \ref{def:az-le-d} for the definition of $\O(U_S)_{\le d}$)
\begin{equation}\label{intro:thetav}
    \theta^{\#}_{d, v} \: \oh(U_S)_{\le d}[\log^{\fu}, \Li_1^{\fu}, \ldots\Li_d^{\fu}]_v \to \oh(U_S)_{\le d}[\Phi, d]_v.
\end{equation}
Completing the geometric step in depth $d$ and degree $v$ is equivalent to computing the kernel of this map.

When $|S| = 1$, Corwin and Dan-Cohen compute the kernel of $\theta^{\#}_{d, v}$ for small $d$ and $v$ by hand, with minimal difficulty. However, this case is deceptively simple: we show in \Cref{rem:infeasible} that for $|S| = 2$, even when one knows that $\ker(\theta^{\#}_{6, 18})$ should be non-trivial, computing its kernel directly amounts to computing the kernel of a $4183\times 996$ matrix over a polynomial algebra in $30$ variables, which is infeasible even for a supercomputer.

To prove \Cref{thm:intro}, we explicitly compute this kernel using a two-faceted approach. First, we develop an algorithm to prove that $\ker(\theta^{\#}_{d, v}) = 0$ whenever $d, v\le 17$. Second, via the method of resultants, we explicitly construct an element of $\ker(\theta^{\#}_{6, 18})$.

\subsubsection{An algorithm for bounding the number of motivic Chabauty--Kim functions}\label{s:introalgo}

The shuffle algebra $\oh(U_S)_{\leq d}$ can be identified with a polynomial ring $\Q[X_1, \ldots, X_n]$ with $n$ the number of Lyndon words of degree $\geq -d$ of a fixed ordered generating set $\Sigma$ for $\Lie(U_S)$. Thus, for fixed $d$ and $v$, the operator $\theta^{\#}_{d, v}$ can be viewed as a matrix over a polynomial ring. While computing its kernel directly is infeasible, observe that its kernel is non-zero only if it is non-zero for every possible integer evaluation of $(X_1, \ldots, X_n)$. 

We use this observation to formulate \Cref{algo}, which takes as input a tuple of positive integers $(s,d,v)$, where $s=\vert S\vert$, and outputs the dimension of the kernel of $\theta^{\#}_{d, v}$, evaluated at a random integer vector $x\in \Z^n$. The output is therefore an upper bound (and likely also a lower bound -- see Remark \ref{rem:Zar}) for the dimension of the space of polylogarithmic motivic Chabauty--Kim functions in depth $d$ and degree $v$ for $s$ primes (\Cref{prop:guestimate}).

We have implemented this algorithm in SageMath \cite{sagemath}; the code can be viewed at our  \href{https://github.com/Ariel-Z-Weiss/polylog-chabauty-kim}{GitHub repository}. The output of the algorithm for $(2, 17, 17)$ is $0$, proving \Cref{thm:intro}$(i)$ (\Cref{thm:no}). 

However, the output of the algorithm for $(2,17,18)$, and even $(2, 6, 18)$, is $1$, which indicates that it is extremely likely that a non-zero polylogarithmic motivic Chabauty--Kim function exists in depth $6$ and degree $18$. Granted the existence of such a function, the output proves \Cref{thm:intro}$(iii)$. In order to prove \Cref{thm:intro}, all that remains is therefore to prove the existence of such a function.

\subsubsection{Constructing motivic Chabauty--Kim functions via resultants}

For any $\vert S\vert$, we propose a resultant method that effectively constructs polylogarithmic motivic Chabauty--Kim functions (\Cref{s:multiple}). The method is inspired by the work \cite{DCJ} on the surface $M_{0,5}$ (the moduli space of genus $0$ curves with $5$ marked points). At this level of generality, we cannot guarantee the non-triviality of the resulting functions. However, when $\vert S\vert=2$, we construct a function of depth $6$ and degree $18$ and prove its non-triviality (\Cref{prop:F5}). We now briefly sketch the construction in the case $\vert S\vert =2$ (see \Cref{chapter:resultants} for the detailed construction). 

Constructing a polylogarithmic motivic Chabauty--Kim function requires finding an element in the kernel of the map \eqref{intro:theta}. As explained in \Cref{s:introalgo}, after restricting to a fixed depth $d$ and degree $v$, the map \eqref{intro:thetav} can be expressed as a matrix with coefficients in the shuffle algebra $\oh(U_S)_{\leq d}$. It is however computationally infeasible to compute its null space, even though we can implement the matrix in SageMath. In the case where $\vert S\vert=2$ and $d$ is even, the images of $\log^{\fu}, \Li_1^{\fu}, \ldots, \Li_d^{\fu}$ under \eqref{intro:theta} take values in a polynomial algebra with $4+(d-1)/2$ variables $x_1, y_1, x_2, y_2, z_3, z_5, \ldots, z_{d-1}$ and coefficients in $\oh(U_S)_{\leq d}$. Due to the particular shape of the map \eqref{intro:theta}, it is possible to express $z_k$ in terms of images under \eqref{intro:theta} and the variables $x_1, y_1, x_2, y_2,$ and $z_{k'}$ for $k'<k$. From the shape of $\theta^{\#}(\log^{\fu}), \theta^{\#}(\Li_1^{\fu})$, and $\theta^{\#}(\Li_2^{\fu})$, the variables $x_1, y_1, x_2, y_2$ can in turn be expressed in terms of images under \eqref{intro:ev} and the single variable $x_1$. From these observations, it is possible to derive a polynomial expression $P_d(X)\in \mathrm{Im}(\theta^{\#})[X]$ with the property that $P_d(x_1)=0$. The construction works whenever $d\geq 4$. Write $P_d(X)=\theta^{\#}(\nu_d(X))$ for some $\nu_d(X)\in (\oh(U_S)_{\leq d}[\log^{\fu}, \Li_1^{\fu}, \ldots, \Li_d^{\fu}])[X]$. Then 
\[
    \mathrm{Res}(\nu_4(X), \nu_6(X))\in \mathcal{I}^{\PL}_{S, 6}
\]
is a non-trivial Chabauty--Kim function, which happens to be a multiple of $(\log^{\fu})^6(\Li_2^{\fu}-\frac{1}{2}\log^{\fu}\Li_1^{\fu})$. The function $F^{\vert 2\vert}_{6,18}$ satisfying 
\[
\mathrm{Res}(\nu_4(X), \nu_6(X))=(\log^{\fu})^6\bigg(\Li_2^{\fu}-\frac{1}{2}\log^{\fu}\Li_1^{\fu}\bigg) F^{\vert 2\vert}_{6,18}
\]
is a non-trivial Chabauty--Kim function of depth $6$ and degree $18$ (\Cref{prop:F5}).
It is worth pointing out that the factor $\Li_2^{\fu}-\frac{1}{2}\log^{\fu}\Li_1^{\fu}$ is precisely the generator of $\mathcal{I}^{\PL}_{\{ \ell \},2}$ studied in \cite{DCW2}, where $\ell$ denotes any prime. We note however that it is not an element of $\mathcal{I}^{\PL}_{S,2}$ for $\vert S\vert =2$.

\subsection{Prior works}

The present work is the first to tackle explicit Chabauty--Kim in depth $> 4$ for $\vert S\vert\geq 2$. 
In this section, we give an overview of prior works on the subject.

The map $\loc$ was first made explicit in \cite{DCW1} in depth $2$. In \cite{BDCKW}, the case $S=\emptyset$ in depths $1$ and $2$ was studied. In \cite{DCW1}, the equality $X(\Z_S)=(X(\Z_p)^{\mathrm{PL}}_{S,d})^{S_3}$ was verified for $S=\{ 2\}$ and $d=2$ with $p=3,5,7$, but turned out to fail with $p=11$. In \cite{DCW2}, Dan-Cohen and Wewers studied the case $S=\{ 2\}$ in depth $4$ and verified the equality $X(\Z_S)=(X(\Z_p)^{\mathrm{PL}}_{S,d})^{S_3}$ with $p=3, 5, \ldots, 29$ using two explicit $p$-adic Chabauty--Kim functions in depths $2$ and $4$ respectively \cite[Thm.~1.17.1]{DCW2}. The motivic framework of the method was further developed in \cite{DC}, resulting in an algorithm for determining $X(\Z_S)$ whose halting is conditional on a list of conjectures. The polylogarithmic motivic Chabauty--Kim method for $X$ was fully developed in \cite{CDC1}, including the separation into the geometric and the arithmetic step. The geometric step was solved for $\vert S\vert=1$ in depth $4$, and the equality $X(\Z_S)=(X(\Z_p)^{\mathrm{PL}}_{S,d})^{S_3}$ was verified for $S=\{ 3 \}$ in depth $4$ with $p=5,7$. It is also in \cite{CDC1} that the necessity of symmetrising the polylogarithmic Chabauty--Kim locus via the $S_3$-action was discovered. In \cite{CDC2}, the authors describe an algorithm to determine the set $X(\Z_S)$, based on the method of \cite{CDC1}, whose halting depends on a list of conjectures. Part of the work of the present paper can be seen as making their algorithm for the geometric step effective, a step that was perceived in \cites{CDC1, CDC2} as simple but is far from it as soon as the size of $S$ or $d$ gets large.
In related work, motivic Chabauty--Kim was recently developed for the surface $M_{0,5}$ in \cite{DCJ} and was worked out explicitly for $S=\{ 2, 3\}$ in depth $4$. A certain weight advantage encountered for $M_{0,5}$ allowed Dan-Cohen and the first author to work in depth $4$, a depth that is too small to handle $X$ when $\vert S\vert =2$. Their calculations solve in particular the arithmetic step for $X$ in depth $4$ for $S=\{ 2, 3\}$. Recently, the case $S=\{ 2, 3\}$ for $X$ was tackled in \cite{martin} using a different method, namely the {\it refined Chabauty--Kim method} first developed in \cite{bettsdogra}. This refined version of the method allows L\"udtke to work in depth $4$, resulting in new evidence for the {\it refined Kim conjecture} in this setting. The previous work \cite{BB+} concerned the refined Chabauty--Kim method for $\vert S\vert=2$ in depth $2$. The recent work \cite{BKL24} proves Kim's conjecture for $S = \emptyset$ and the refined Kim conjecture for $S=\{2\}$, for all odd primes $p$. 

\subsection{Outline}

\Cref{s:mot} contains preliminaries on mixed Tate motives and motivic fundamental groups. \Cref{s:overview} gives an overview of the polylogarithmic motivic Chabauty--Kim method. \Cref{s:coord} gives a description of the evaluation map \eqref{intro:ev} in terms of non-canonical coordinates of the Selmer scheme. This marks the end of the background material. \Cref{s:algg} contains the upper bound algorithm and the proofs of \Cref{thm:intro}$(i)$, $(iii)$. Finally, in \Cref{chapter:resultants}, we describe the resultant method, which culminates in the proof of \Cref{thm:intro}. \Cref{s:multiple} contains a brief discussion of possible generalisations of the method to the case $\vert S\vert>2$.

\subsection*{Acknowledgements}

The authors thank David Corwin and Ishai Dan-Cohen for their support and encouragement throughout the project. They also thank Martin L\"udtke for numerous helpful conversations, and both Martin L\"udtke and Alexander Betts for their comments and corrections on a preliminary version of this paper, as well as the anonymous referees. DJ and SZ were supported by the Israel Science Foundation (grants No.\ 726/17 and No.\ 621/21 respectively, PI: Ishai Dan-Cohen) while at Ben-Gurion University of the Negev. DTBGL was supported by an Emily Erskine Endowment Fund Post-Doctoral Researcher Fellowship and the Israel Science Foundation (grant No.\ 2301/20, PI: Ari Shnidman) while at the Hebrew University of Jerusalem, and by an Edixhoven Post-Doctoral Fellowship while at Leiden University. FMS and AW were supported by the Israel Science Foundation (grant No.\ 1963/20, PI: Daniel Disegni) and by the US-Israel Binational Science Foundation (grant No.\ 2018250, PI: Daniel Disegni) while at Ben-Gurion University of the Negev.

%%%%%%%%%%%%%%%%%%%%%%%%%%%%%%%%%%%%%%%%%%%%%%%%%%%%%%%%%%%

\section{Mixed Tate motives and fundamental groups}\label{s:mot}

\subsection{Mixed Tate Motives}

Let $S$ be a finite set of primes, and let $\MT(\Q)$ denote the category of mixed Tate motives over $\Q$, as defined in \cite[Thm.~4.2]{levineTM} \footnote{Note that the hypothesis in \cite[Thm.~4.2]{levineTM} holds: the Beilinson--Soul\'e vanishing conjecture for number fields follows from work of Borel \cite{borel}.}: it is a rigid abelian tensor category whose objects are iterated extensions of the pure Tate motives $\Q(n)$, with $n\in \Z$.

\begin{definition}
    The category $\MT(\Z_S)$ is the full subcategory of $\MT(\Q)$ consisting of mixed Tate motives unramified outside $S$, in the sense of \cite[\S 1.7, Def.~1.4]{delignegoncharov}.
\end{definition}

Each object $Y$ of $\MT(\Z_S)$ has a functorial finite increasing filtration
\[
0\sub W_a(Y)\sub W_{a+1}(Y)\sub \cdots \sub W_{b-1}(Y) \sub W_b(Y)=Y,
\]
where, for each integer $n$, the quotient $\GR_n^W(Y) = W_{n}(Y)/W_{n-1}(Y)$ is isomorphic to a direct sum of a finite number of copies of $\Q(-n/2)$ (we adopt the convention that $\Q(-n/2)=0$ if $n$ is odd). The above filtration is called the \emph{weight filtration}.

The subcategory of semisimple objects of $\MT(\Z_S)$ consists of direct sums of the Tate objects, and is therefore isomorphic to the category $\GrVec_\Q$ of finite-dimensional graded $\Q$-vector spaces.
Moreover, by \cite[Cor. 4.3]{levineTM}, for all integers $p, q$, we have
\begin{equation}\label{eq:k-groups-extensions}
\mathrm{Ext}_{\MT(\Q)}^{p}(\Q(0), \Q(q))\simeq K_{2q-p}^{(q)}(\Q).
\end{equation}
The weight filtration induces a (canonical) graded piece functor
\begin{equation*}\label{funct:fibre}
\omega\colon \MT(\Z_S) \to \GrVec_\Q
\end{equation*}
given by
\[
Y \mapsto \bigoplus_{n\in \Z} (\GR^W_n(Y) \otimes \Q(n/2))=\bigoplus_{d\in \Z} \mathrm{Hom}_{\MT(\Z_S)}(\Q(-d), \mathrm{gr}^W_{2d}(Y))=:\bigoplus_{d\in \Z} \omega_d(Y).
\]
The degree $d$ of $\omega_d(Y)$ is referred to as the {\it half-weight}, but we will simply use the terminology ``degree''.
The functor $\omega$ is faithful and exact, and endows $(\MT(\Z_S), \omega)$ with the structure of a neutral $\Q$-Tannakian category (we refer to \cite{Rivano2} for details on the Tannakian formalism).

\begin{remark}\label{s:remarkgrading}
We have departed from the conventions of \cite{delignegoncharov} in order to maintain the conventions of \cite{CDC1}. More precisely, our $d\th$ graded piece $\omega_d(Y)$ of $\omega(Y)$ is Deligne--Goncharov's $(-d)\th$ graded piece. A consequence is that the pro-unipotent Lie algebras that we encounter later are \emph{negatively} graded.
\end{remark}

\subsection{The pro-unipotent mixed Tate fundamental group}\label{s:proT}

\begin{definition}
    Let 
    $G_S:=\Aut^\tensor(\omega)$
be the Tannakian fundamental group of $(\MT(\Z_S), \omega)$. It is a (pro-)affine algebraic group over $\Q$ and $\omega$ induces an equivalence of categories $\MT(\Z_S) \simeq \mathrm{Rep}_{\Q}(G_S)$.
\end{definition}

The action of $G_S$ on 
\[
\omega(\Q(-1))=\omega_1(\Q(-1))=\Hom_{\MT(\Z_S)}(\Q(-1),\Q(-1))=\Q,
\]
gives a non-trivial map from $G_S$ to $\GL_{1,\Q}$, and thus a surjection $G_S\twoheadrightarrow\G_m$.

\begin{definition}\label{def:pro-unip}
    The \emph{pro-unipotent fundamental group} $U_S$ of $\MT(\Z_S)$ is the kernel of $G_S\twoheadrightarrow\G_m$. 
\end{definition}

By definition, there is a short exact sequence
\[1\to U_S\to G_S\to\G_m\to 1\]
of affine algebraic groups over $\Q$. This short exact sequence splits: 
letting $\lambda\in \G_m$ act on $\omega_d(X)$ by multiplication by $\lambda^d$ gives a $\G_m$-action (i.e., a grading) on $\omega(X)$, and hence a splitting $\G_m\to G_S$ (known as the Levy splitting of $G_S$). 
Thus, there is an isomorphism
\[G_S\simeq U_S\rtimes\G_m.\]
The action of $U_S$ respects the weight filtration on $\MT(\Z_S)$ and is trivial on weight graded pieces. It follows that $U_S$ is the maximal pro-unipotent subgroup of $G_S$ \cite[\S 2.1]{delignegoncharov}.

We may describe $U_S$ in terms of the extension groups $\Ext^1(\Q(0), \Q(n))$ in $\MT(\Z_S)$ (the $\Ext^2$ are trivial in $\MT(\Z_S)$ \cite[Prop.~1.9]{delignegoncharov}), as we will now explain. \footnote{To avoid any ambiguity when taking duals of graded vector spaces, we adopt the conventions outlined in \cite[\S 2.1.1]{CDC1}.} By \eqref{eq:k-groups-extensions}, we have
\[
\Ext^1(\Q(0), \Q(n))=K_{2n-1}^{(n)}(\Z_S)=
\begin{cases}
K_{2n-1}(\Z_S)\otimes \Q & n\geq 0 \\
0 & n<0.
\end{cases}
\]
When $n = 1$, we have $K_1(\Z_S)\otimes \Q=\Z_S^\times \otimes \Q$, which has dimension $\vert S\vert$. When $n\ge 2$, we have $K_{2n-1}(\Z_S)\otimes \Q=K_{2n-1}(\Q)\otimes \Q$, which has dimension $0$ for $n$ even and dimension $1$ for $n$ odd \cite{borel}. 

By the formalism of pro-unipotent groups, it follows that $\mathrm{Lie}(U_S)$ is a free, negatively graded, pro-nilpotent Lie algebra, which is finite-dimensional in each degree (the grading is negative because of \Cref{s:remarkgrading}). Moreover, the functor $\omega$ induces an equivalence between $\MT(\Z_S)$ and the category of graded finite-dimensional $\Q$-vector spaces with an action of $\Lie(U_S)$ compatible with the gradings \cite[Prop.~2.2]{delignegoncharov}. By \cite[A.15]{delignegoncharov}, a system of homogeneous free generators of $\Lie(U_S)$ is obtained by (non-canonically) lifting a basis of each 
\[
\Lie(U_S)^{\ab}_{-n}=\Ext^1(\Q(0), \Q(n))^\vee.
\]
Hence, the pro-unipotent group $U_S$ is non-canonically isomorphic to the free pro-unipotent group (in the sense of \cite[\S 2.1.3]{CDC1}) on the graded vector space $\bigoplus_{n\in \Z} \Ext^1(\Q(0), \Q(n))^\vee$ with $\Ext^1(\Q(0), \Q(n))^\vee$ placed in degree $-n$.

\begin{definition}\label{def:free-generators-n}
    Fix once and for all a system of free generators of $\Lie(U_S)$ given by $\{\tau_\ell \}_{\ell\in S}$  in degree (i.e., half-weight) $-1$, and $\sigma_{2n+1}$ in degree $-2n-1$, for each $n\ge 1$. We denote by $\Sigma$ this ordered set of free generators, and by $\Sigma_{-d}$ the subset of generators of degree $-d$. \footnote{These generators may be chosen arbitrarily, but for the arithmetic step one should choose them as in \cite[Prop.~4.1]{CDC1}.} 
\end{definition}

As a Hopf algebra, $\oh(U_S)$ is dual to the completed universal enveloping algebra $\UU\Lie(U_S)$. Hence, by the formalism of \cite[\S2.1.3]{CDC1}, it follows that $\oh(U_S)$ is non-canonically isomorphic to the free shuffle algebra on the positively graded vector space
$\bigoplus_{n=1}^\infty \Ext^1(\Q(0), \Q(n))$ with $\Ext^1(\Q(0), \Q(n))$ placed in degree $n$. 

Explicitly, for each word $w\in \UU\Lie(U_S)$ in the fixed generators $\{ \tau_\ell \}_{\ell\in S}$ and $\{ \sigma_{2n+1} \}_{n\geq 1}$ of $\Lie(U_S)$, we denote the dual basis element by $f_w\in \oh(U_S)$. The coproduct is the deconcatenation coproduct given by 
\begin{equation}\label{eq:coprod}
\Delta f_w:= \sum_{w_1w_2=w} f_{w_1}\otimes f_{w_2}.
\end{equation}
For words $w_1, w_2\in \UU\Lie(U_S)$, the commutative product is the shuffle product given by 
\[
f_{w_1} f_{w_2}:= \sum_{\sigma \in \Sha(\ell(w_1), \ell(w_2))} f_{\sigma(w_1w_2)},
\]
where $\Sha(\ell(w_1), \ell(w_2))\subseteq S_{\ell(w_1)+\ell(w_2)}$ is the subgroup of shuffle permutations of type $(\ell(w_1), \ell(w_2))$. 

The elements $\{ f_w \}_w$ constitute a basis of $\oh(U_S)$, which we call the \emph{abstract shuffle basis}.

\begin{definition}\label{def:az-le-d}
    We write $\oh(U_S)_{\leq d}$ for the subalgebra of $\oh(U_S)$ generated by words of degree at most $d$. More precisely, $\oh(U_S)_{\leq d}=\oh((U_S)_{\geq -d})$, where $(U_S)_{\geq -d}$ is the pro-unipotent group associated to the quotient Lie algebra $\Lie(U_S)/\Lie(U_S)_{< -d}$ generated by the set $\Sigma_{\geq -d}$ of generators in $\Sigma$ of degree $\geq -d$.
\end{definition}

\subsection{The unipotent de Rham fundamental group}\label{s:dRfund}

Let 
$X := \P^1 \setminus \{0,1,\infty\}$ denote the thrice-punctured projective line,
viewed as a scheme over $\Z_S$. 

\begin{definition}
    Fix a point $x\iQ\setminus \{ 0,1 \}$ or a tangential vector at $0, 1$ or $\infty$. The \emph{unipotent de Rham fundamental group} of $X$ at $x$, $\pi^{\mathrm{un}, \dR}_1(X; x)$, is the Tannakian fundamental group of the neutral $\Q$-Tannakian category of algebraic vector bundles with nilpotent connection on $X_\Q$, with fibre functor given by taking the fibre at $x$.
    In the case that $x=\vec{1}_0$ is the tangential vector $1$ at the point $0$, we set 
\[
\Pi:=\pi^{\mathrm{un}, \dR}_1(X;\vec{1}_0). \footnote{Here we do not follow the notation in Deligne--Goncharov \cite[\S 5.7]{delignegoncharov}. Namely, our $\Pi$ is their $\Pi_{0,0}$, but it is isomorphic to what Deligne and Goncharov denote by $\Pi$.}
\]
\end{definition}

The group $\Pi$ is an affine, pro-unipotent group over $\Q$. Its coordinate ring $\oh(\Pi)$ is the de Rham realisation of a commutative Hopf algebra in the category of $\mathrm{ind}$-objects of $\MT(\Z)$, the category of unramified mixed Tate motives over $\Q$ \cite[\S4, \S5]{delignegoncharov}. The de Rham realisation functor of $\MT(\Z_S)$ is canonically isomorphic to $\omega$ \footnote{Because the Hodge filtration splits the weight filtration.} \cite[\S2.9]{delignegoncharov}. In particular, the group $\Pi$ carries an action of $G_S$.

Let $V$ denote the graded vector space consisting of $H_1^{\dR}(X_\Q)$ in degree $-1$ and $0$ in all other degrees. Then we can view $\Pi$ as the free pro-unipotent group on $V$, in the following sense. Take the completed graded tensor algebra $TV$ on $V$ and put the unique coproduct on it such that all elements of $V$ are primitive. Denote by $\mathfrak{n}$ the subspace of $TV$ consisting of primitive elements. Then $\mathfrak{n}$ has the structure of a strictly negatively graded pro-nilpotent Lie algebra, so it is the Lie algebra of a graded pro-unipotent group $U(V)$. The statement that $\Pi$ is the free pro-unipotent group on $V$ means that $\Pi=U(V)$. In particular, the coordinate ring $\oh(\Pi)$ is the Hopf dual algebra of the universal enveloping algebra $\mathcal{U}\mathfrak{n}$. This enveloping algebra is $TV$ with the unique coproduct for which all elements of $H_{1}^{\dR}(X_\Q)$ are primitive. 

This description allows us to apply the machinery of \cite[\S 2.1.3]{CDC1}. 
Take $\{ e_0, e_1 \}\sub TV$ to be the basis dual to the standard basis $\{ dz/z, dz/(1-z) \}$ of $H^{1}_{\dR}(X_\Q)$, assign to these basis elements the degree $-1$ and declare them to be primitive. Then $TV$ is generated by words $w$ in the basis elements $e_0$ and $e_1$. Let $\Li^{\mathfrak{u}}_w$ denote the element of $\oh(\Pi)$ that is dual to the word $w$. Then $\oh(\Pi)$ is generated by the elements $\{ \Li^{\mathfrak{u}}_w \}_w$.
The deconcatenation coproduct is given by  
\begin{equation}\label{concat}
\Delta \Li^{\mathfrak{u}}_w := \sum_{w_1w_2=w} \Li^{\mathfrak{u}}_{w_1}\otimes \Li^{\mathfrak{u}}_{w_2},   
\end{equation}
and the shuffle product by
\begin{equation}\label{shuffle}
\Li^{\mathfrak{u}}_{w_1} \Li^{\mathfrak{u}}_{w_2} := \sum_{\sigma \in \Sha(\ell(w_1), \ell(w_2))} \Li^{\mathfrak{u}}_{\sigma(w_1w_2)}.
\end{equation}

%%%%%%%%%%%%%%%%%%%%%%%%%%%%%%%%%%%%%%%%%%%%%%%%%%

\section{The polylogarithmic motivic Chabauty--Kim method}\label{s:overview}

We restrict the discussion exclusively to the polylogarithmic quotient of $\Pi$. For the more general situation, as well as details, see \cite[\S 2]{CDC1}.

\subsection{The polylogarithmic quotient}\label{sec:polylog quotient}

Recall that 
$X = \P^1 \setminus \{0,1,\infty\}$ is viewed as a scheme over $\Z_S$.
The inclusion of schemes $X\hookrightarrow \G_m$ gives rise to a $G_S$-equivariant morphism $\Pi\to \pi^{\mathrm{un}, \dR}_1(\G_m;\vec{1}_0)=\Q(1)$, the target group being the pro-unipotent de Rham fundamental group of $\mathbb{G}_m$ at the tangential base-point $\vec{1}_{0}$. Let $N$ denote the kernel of this homomorphism. 

\begin{definition}
    The \emph{polylogarithmic unipotent de Rham fundamental group} is defined as 
\[
\Pi^{\PL}:=\Pi/[N, N].
\]
\end{definition}
The group $N$ carries an action of $G_S$, hence $[N,N]$ is $G_S$-stable and, as a consequence, $\Pi^{\PL}$ carries an action of $G_S$.
Deligne \cite[Prop.~16.13]{del} proved that
\[
\Pi^{\PL}=\Q(1)\ltimes \prod_{i=1}^\infty \Q(i),
\]
the action of $\Q(1)$ on the infinite product being the one defined by \cite[\S 16.12]{del}. In particular, the action of $G_S$ factors through $\G_m$.
Given $d\geq 1$, the depth $d$ polylogarithmic quotient is $\Pi^{\PL}_{\geq -d}$ (in the notation of \cite[p. 1869]{CDC1}). 

Recall from \Cref{s:dRfund} that $\Pi$ is the free pro-unipotent group on the graded vector space $V$, which is $H_1^{\dR}(X_\Q)=\mathrm{Span}_{\Q}\{ e_0, e_1 \}$ in degree $-1$ and zero elsewhere. The specific words of the form
\[
e_0, \quad e_1, \quad e_1e_0, \quad e_1e_0e_0, \quad e_1e_0e_0e_0, \quad\ldots
\]
are called polylogarithmic words, and we denote their corresponding dual basis elements in $\oh(\Pi)$  by 
\[
\log^{\mathfrak{u}}, \quad \Li_1^{\mathfrak{u}}, \quad \Li_2^{\mathfrak{u}}, \quad \Li_3^{\mathfrak{u}}, \quad\Li_4^{\mathfrak{u}}, \quad \ldots.
\]
We then have  
\[\O(\Pi^{\PL}) = \langle \log^\uu, \Li_1^\uu, \Li_2^\uu,\ldots\rangle\sub \O(\Pi).\] 
For each $d\ge 1$, let $\O(\Pi^{\PL})_{\leq d}\sub\O(\Pi^{\PL})$ be the coordinate ring of $\Pi^{\PL}_{\geq -d}$. Then 
\[\oh(\Pi^{\PL})_{\leq d}=\Q[\log^{\fu}, \Li_1^{\fu}, \Li_2^{\fu}, \ldots, \Li_d^{\fu}]\]
is a Hopf subalgebra of $\O(\Pi^{\PL})$.

\subsection{The Selmer scheme}

\begin{definition}\label{def:ev}
    The polylogarithmic motivic Selmer scheme is defined as
    \[
\Sel_S^{\PL}:=\Hom_{\G_m}(U_S, \Pi^{\PL}).
\] 
It is a $\Q$-scheme that comes equipped (by representability) with a universal evaluation map (a morphism of $U_S$-schemes)
\[
\ev \: \Sel_S^{\PL} \times_\Q U_S \to \Pi^{\PL} \times_\Q U_S.
\] 
\end{definition}
By \cite[Prop.~5.2.1]{DCW2}, the set of $\Q$-points of the Selmer scheme can be identified with 
\[
\Sel_S^{\PL}(\Q)=H^1(G_S, \Pi^{\PL}):=\{ \Pi^{\PL}\text{-torsors over } \Q \text{ with a compatible action of } G_S\}.
\]

Let $K := \operatorname{Frac}(\oh(U_S))$ denote the function field of $U_S$. 
Pull back the morphism of $U_S$-schemes $\ev$ to a map of $K$-schemes 
\[
\ev_K : (\Sel_S^{\PL})_{K} \to (\Pi^{\PL})_{K}. 
\]

\begin{definition}\label{def:I}
    Define
    \[
\cI^{\PL}_S:=\ker(\ev_K^{\#})=\{ f\in \cO((\Pi^{\PL})_{K}) \mid f(\im(\ev_K))=0 \},
\]
where $\ev_K^{\#}$ is the map on coordinate rings induced by $\ev_K$.
\end{definition}

\subsection{The Chabauty--Kim diagram}\label{s:diag}

Let $p \not\in S$ be a prime.
The (expanded) Chabauty--Kim diagram is a commutative diagram of sets
\begin{equation}\label{diag1}
\begin{tikzcd}[bend angle = 80]
  X(\Z_S) \arrow[hook]{r} \arrow{d}{\kappa} & X(\Z_p) \arrow{d}{\kappa_p} \\
  H^1(G_S, \Pi^{\PL}) \arrow[equal]{d} \arrow{r}{\loc} & \Pi^{\PL}(\Q_p) \arrow[equal]{dd} \\
  \Sel_S^{\PL}(\Q) \arrow[hook]{d} & \\
  \Sel_S^{\PL}(\Q_p) \arrow{r}{\loc} & \Pi^{\PL}(\Q_p),
\end{tikzcd}
\end{equation}
with maps given as follows:
\begin{itemize}[leftmargin=*]
    \item {\it The motivic Kummer map $\kappa$}: Given a point $b\in X(\Z_S)$, assign to it a $G_S$-equivariant $\Pi$-torsor over $\Z_S$, namely the torsor of paths from $b$ to $\vec{1}_0$ denoted $_bP_{\vec{1}_0}$. This torsor can naturally be viewed as an element of $H^1(G_S, \Pi)$. Now, $\kappa(b)$ is the image of $_bP_{\vec{1}_0}$ in $H^1(G_S, \Pi^{\PL})$. 
    
    \item {\it The local Kummer map $\kappa_p$}: Given a point $b\in X(\Z_p)$, assign to it the point $\varphi_b \in   \Pi(\Q_p)$ determined by
    \[
    \varphi^{\#}_b \: \cO(\Pi)\to \Q_p, \qquad \Li^{\fu}_w \mapsto \int_b^{\vec{1}_0} \omega_w,
    \]
    where $\omega_w$ is the sequence of differential forms associated with the word $w$.  
    Here the integral is taken in the sense of Coleman's theory of $p$-adic iterated integrals. Its image in the quotient $\Pi^{\PL}$ is then $\kappa_p(b)$. 
    The map $\kappa_p$ thus defined is $p$-adic locally analytic. 
    
    \item {\it The map $\loc$:} There is a canonical $\Q_p$-point $\per_p$ of $U_S$, or equivalently, a map 
    $
    \per_p^{\#} \: \oh(U_S) \to \Q_p,
    $
    known as the $p$-adic period map (the point $(\eta_p^{\mathrm{ur}})^{-1}$ in the notation of \cite{perp}, see \cite[\S 3.3]{BKL24}). It maps polylogarithmic values to the corresponding $p$-adic polylogarithmic values. 
    Pulling back the evaluation map $\ev$ along $\per_p \: \spec(\Q_p)\to U_S$ gives a morphism of $\Q_p$-schemes
    \[
    \ev_{\Q_p} \: \Sel^{\PL}_S \times_{\per_p} \spec(\Q_p) \to \Pi^{\PL} \times_{\per_p} \spec(\Q_p),
    \]
    which in turn induces a map on $\Q_p$-points
    \begin{align*}
    \loc_{\Q_p} \: \Sel^{\PL}_S(\Q_p) &\to \Pi^{\PL}(\Q_p)\\
    c&\mapsto c(\per_p).
    \end{align*}
    The map $\loc$ in \eqref{diag1} is the restriction of the map $\loc_{\Q_p}$ to the $\Q$-points $\Sel_S^{\PL}(\Q)$. In particular, $\loc$ is algebraic, in the sense that it arises as the map on $\Q_p$-points of the morphism of schemes $\ev_{\Q_p}$.
\end{itemize}

\subsection{The Chabauty--Kim locus}

The goal is to determine the finite set $X(\Z_S)$. Denote the pull-back of $\Pi^{\PL}\times_{\Q} U_S \to U_S$ via $\per_p \colon \spec(\Q_p)\to U_S$ by $\Pi^{\PL}_{\Q_p}\to \spec(\Q_p)$. It comes equipped with a morphism $\varphi \colon \Pi^{\PL}_{\Q_p}\to \Pi^{\PL}\times_{\Q} U_S$.
Given $f\in \O(\Pi^{\PL}\times U_S)$, we let $f_{\Q_p} = \varphi^{\#}(f)\in \oh(\Pi^{\PL}_{\Q_p})$. The latter induces a function $f_{\Q_p} \colon \Pi^{\PL}(\Q_p)\to \Q_p$ and we obtain, following \cite[Def.~2.26]{CDC1}, a $p$-adic Chabauty--Kim function by taking the composition
\begin{equation}\label{pCK}
f|_{X(\Z_p)} \: X(\Z_p)\xrightarrow{\kappa_p}\Pi^{\PL}(\Q_p)\xrightarrow{f_{\Q_p}} \Q_p.
\end{equation}

Recall from \Cref{def:I} that $\cI^{\PL}_S=\{ f\in \cO((\Pi^{\PL})_K) \mid f(\im(\ev_K))=0 \}$. Given an integer $d\geq 1$, we define 
\begin{equation}
    \mathcal{I}^{\PL}_{S,d}:=\O(\Pi^{\PL}_{\geq -d}\times (U_S)_{\geq -d})\cap \cI^{\PL}_S.
\end{equation}

\begin{definition}
    Given an integer $d\geq 1$, we define the \emph{polylogarithmic motivic Chabauty--Kim locus} in depth $d$ to be
    \[X(\Z_p)^{\PL}_{S, d} := \set{z\in X(\Z_p) : f|
_{X(\Z_p)}(z) = 0,\ \ \forall f\in \mathcal{I}^{\PL}_{S,d}}. \footnote{Note that our definition of the ideal $\mathcal{I}^{\PL}_{S,d}$ differs slightly from \cite[Def. 2.22]{CDC1} since they allow $K$-coefficients. However, our definition of $X(\Z_p)^{\PL}_{S, d}$ remains the same as \cite[Def. 2.27]{CDC1}. The advantage of allowing $K$-coefficients is described in \cite[Rem. 2.21]{CDC1} and we use $K$-coefficients in the elimination theory performed in Section \ref{chapter:resultants}, although we eventually clear denominators to produce elements in $\mathcal{I}^{\PL}_{S,d}$.} 
\] 
\end{definition}
From the commutativity of \eqref{diag1}, the set $X(\Z_S)$ is contained in $X(\Z_p)^{\PL}_{S, d}$ for all $d$. 
We thus obtain a descending sequence of sets
\[
X(\Z_p)\supseteq X(\Z_p)^{\mathrm{PL}}_{S,1} \supseteq X(\Z_p)^{\mathrm{PL}}_{S,2} \supseteq \cdots \supseteq X(\Z_p)^{\mathrm{PL}}_{S,d} \supseteq \cdots \supseteq X(\Z_S).
\] 
Moreover, the set $X(\Z_p)^{\mathrm{PL}}_{S,d}$ is finite for sufficiently large $d$ (this follows for instance from \Cref{prop:dpl-dphi} below).
It is conjectured \cite[Conj. 2.32]{CDC1} that 
\begin{equation}\label{conj:cdc}
X(\Z_S)=(X(\Z_p)^{\mathrm{PL}}_{S,d})^{S_3}, \qquad \text{ for } d\gg 0.  
\end{equation}
There are inclusions
\[
X(\Z_S)\subseteq X(\Z_p)^{\rm Kim}_{S,d}\subseteq (X(\Z_p)^{\mathrm{PL}}_{S,d})^{S_3},
\]
where $X(\Z_p)^{\rm Kim}_{S,d}$ denotes Kim's $p$-adic Chabauty locus in depth $d$ (see \cite[Rem. 2.28]{CDC1}).
From these inclusions, it is clear that conjecture \eqref{conj:cdc} implies Kim's conjecture \cite{kim}. One insight of \cite{CDC1} is that it may suffice to use single polylogarithms to deal with the $S$-unit equation, as opposed to requiring multiple polylogarithms as in Kim's original approach.
Dan-Cohen and Corwin verified the conjectural equality \eqref{conj:cdc}  for $S=\{3 \}$, $p=5,7$, and $d=4$ \cite[Thm. 5.5]{CDC1}.

The main advantage of the motivic approach of \cites{CDC1, CDC2} is the division of the computation of $X(\Z_p)^{\mathrm{PL}}_{S,d}$ into two steps: 
\begin{itemize}[leftmargin=*]
\item {\it Geometric step in depth $d$}: the task of finding non-trivial polylogarithmic motivic Chabauty--Kim functions in depth $\leq d$, i.e., non-zero elements $f\in \mathcal{I}^{\PL}_{S,d}$. These are polynomials in $\oh(U_S)_{\leq d}[\log^{\fu}, \Li_1^{\fu}, \ldots, \Li_d^{\fu}]$. This step depends on the cardinality of $S$, but not on the specific primes contained in $S$, nor on the prime $p$;
\item {\it Arithmetic step in depth $d$}: the task of determining the $p$-adic Chabauty--Kim functions $f|_{X(\Z_p)}$ defined by \eqref{pCK} for the functions $f$ found in the geometric step. This requires the calculation of the image of $f$ under the $p$-adic period map $\per_p$. The resulting functions $f|_{X(\Z_p)}$ can be checked to be non-trivial in practice (and are guaranteed to be non-trivial if one assumes the $p$-adic Period Conjecture \cite[Conj. 2.25]{CDC1}). This step depends on the prime $p$ and the specific primes contained in $S$. 
\end{itemize}

\section{The geometric step in coordinates}\label{s:coord}

The goal of the geometric step in depth $d$ is to find non-zero elements 
\[
f\in \mathcal{I}^{\PL}_{S,d}=\O(\Pi^{\PL}_{\geq -d} \times (U_S)_{\geq -d})\cap \mathcal{I}^{\PL}_S,
\] 
where we recall that the ideal $\mathcal{I}^{\PL}_S=\ker(\ev_K^{\#})$ is defined in \Cref{def:I}. 
In this section we describe, following \cite{CDC1}, a (non-canonical) affine model for the Selmer scheme $\Sel_S^{\PL}$ (\Cref{def:ev}) and give an explicit formula for the map $\ev$ in the chosen coordinates.  

\subsection{Coordinates on Selmer schemes}

Recall from \Cref{def:free-generators-n} the fixed system of free generators $\Sigma = \{\tau_\l, \sigma_{2n+1}\}$ of $\Lie(U_S)$, with their specified degrees. Each word $w$ in these generators corresponds to a dual element in $\oh(U_S)$, denoted $f_w$ (the abstract shuffle basis). 

Let $R$ be a $\Q$-algebra, $c\in \Sel^{\PL}_S(R)$, and $\lambda$ be a polylogarithmic word of degree $d$. 
We may write $\Li_\lambda^{\fu}(c):=c^{\#}(\Li_\lambda^{\fu})\in \oh(U_S) \otimes R$ in terms of the abstract shuffle basis:
\[
\Li_\lambda^{\fu}(c)=\sum_{\deg(w)=-d} \phi_{\lambda}^{w}(c) f_w \in \oh(U_S)\otimes R,\]
with coefficients $\phi_{\lambda}^{w}(c)\in R$.
The sum is taken over words of degree $d$ since $c^{\#}$ preserves the gradings by $\mathbb{G}_m$-equivariance.
For each polylogarithmic word $\lambda$ and for each $\rho\in \Sigma$ of the same degree, introduce a free variable $\Phi_\lambda^\rho$. 
Define the polynomial algebra
\[\Q[\Phi]:=\Q[\{ \Phi_{\lambda}^{\rho} \}].\]
Then the assignment
\[c \mapsto (\phi_\lambda^\rho(c))_{\lambda, \rho}\]
defines a map
\[\Psi\: \Sel^{\PL}_S\to\Spec(\Q[\Phi]),\]
which, by \cite[Cor.~3.11]{CDC1}, is an isomorphism of $\Q$-schemes. This isomorphism provides a (non-canonical) description of $\Sel^{\PL}_S$ as an affine scheme. 
Moreover, by \cite[\S 3.3.1]{CDC1}, there are compatible isomorphisms for each integer $d$
\begin{equation}\label{isom:psin}
    \Psi_d \: \Sel^{\PL}_{S,d} \xrightarrow{\sim}\Spec(\Q[\Phi,d])
\end{equation}
where $\Sel^{\PL}_{S,d}:=\Hom_{\G_m}((U_S)_{\geq -d}, \Pi^{\PL}_{\geq -d})$ and $\Q[\Phi,d]=\Q[\{ \Phi_{\lambda}^{\rho} \}_{\deg(\rho)=\deg(\lambda)\geq -d}]$.

\subsection{The evaluation map in coordinates}
Observe that 
\[
\Sel^{\PL}_S \times_{\Q} U_S \overset{\Psi\times \id}{\simeq} \Spec(\Q[\Phi])\times_\Q U_S=\Spec(\oh(U_S)[\Phi]).
\]
\begin{definition}\label{theta}
    Define a map of $U_S$-schemes
\[
\theta \: \Spec(\oh(U_S)[\Phi]) \to \Pi \times_\Q U_S,
\]
or equivalently, a map of $\oh(U_S)$-algebras
\[
\theta^{\#} \: \oh(U_S)[\log^{\fu}, \Li_1^{\fu}, \Li_2^{\fu}, \ldots] \to \oh(U_S)[\Phi]
\]
by 
\begin{equation*}
\log^{\fu} \mapsto \sum_{\tau\in \Sigma_{-1}} f_{\tau} \Phi_{e_0}^\tau
\qquad \text{ and } \qquad
\Li_k^{\fu} \mapsto \sum f_{\rho\tau_{(1)}\ldots \tau_{(r)}}\Phi^\rho_{e_1 e_0^{s-1}}\Phi_{e_0}^{\tau_{(1)}}\ldots \Phi_{e_0}^{\tau_{(r)}},
\end{equation*}
where the latter sum is taken over $\tau_{(1)}, \ldots, \tau_{(r)}\in \Sigma_{-1}$ and $\rho\in \Sigma_{-s}$ such that $r+s=k$ and $1\leq s\leq k$.
\end{definition}
 
By \cite[Cor. 3.11]{CDC1}, we have 
\[
\theta \circ (\Psi\times \id)=\ev.
\]
Moreover, everything remains valid in bounded depth. Hence, if $f\in \oh(U_S)_{\leq d}[\log^{\fu}, \Li_1^{\fu}, \Li_2^{\fu}, \ldots, \Li_d^{\fu}]$, then $f\in \cI_{S,d}^{\PL}$ if and only if $\theta^\#(f)=0$.

\subsection{The geometric step in coordinates}

In conclusion, the task of solving the geometric step in depth $d$ has been reduced to finding the kernel of the map of finite-dimensional $\oh(U_S)_{\leq d}$-algebras
\[
\theta^{\#}_d \: \oh(U_S)_{\leq d}[\log^{\fu}, \Li_1^{\fu}, \Li_2^{\fu}, \ldots, \Li_d^{\fu}] \to \oh(U_S)_{\leq d}[\Phi, d]
\]
defined in \Cref{theta}, a problem in linear algebra.

In \cite{CDC1}, the geometric step was solved in the case $\vert S\vert =1$ in depth $n=4$. They found \cite[Prop.~4.5]{CDC1} that the functions 
\begin{equation}\label{F122}
F^{\vert 1\vert}_{2,2}:=\Li_2^{\fu}-\frac{1}{2}\log^{\fu}\Li_1^{\fu},
\end{equation}
\[
F^{\vert 1\vert}_{4,4}:=f_{\sigma_3}f_{\tau_\ell}\Li_4^{\fu}-f_{\sigma_3\tau_\ell}\log^{\fu}\Li_3^{\fu}-\frac{(\log^{\fu})^3\Li_1^{\fu}}{24}(f_{\sigma_3}f_{\tau_\ell}-4f_{\sigma_3\tau_\ell})
\]
lie in $\mathcal{I}^{\PL}_{S, 4}$ for $S=\{ \ell \}$.

%%%%%%%%%%%%%%%%%%%%%%%%%%%%%%%%%%%%%%%%%%%%%%%%%%%%%%%%%%%%

\section{An upper bound algorithm}\label{s:algg}

Fix an integer $d \ge 1$. In the previous section, we reduced the geometric step to the computation of the kernel of the map of $\oh(U_S)_{\leq d}$-algebras
\[
\theta^{\#}_d \: \oh(U_S)_{\leq d}[\log^{\fu}, \Li_1^{\fu}, \Li_2^{\fu}, \ldots, \Li_d^{\fu}] \to R_{\Phi,d}\sub  \oh(U_S)_{\leq d}[\Phi, d]
\]
defined in \Cref{theta}, where
\[R_{\Phi,d}:= \oh(U_S)_{\leq d}\left[\{\Phi_{e_0}^{\tau_\l}, \Phi_{e_1}^{\tau_\l} : \l\in S\}\cup \{\Phi^{\sigma_{2n+1}}_{e_1e_0^{2n}} : n\ge 1\}\right].\]
Moreover, $\theta_d^\#$ is a homomorphism of graded rings, where: 
\begin{itemize}
\item each $\Li_i^\uu$ is assigned degree $i$, and $\log^\uu$ is assigned degree $1$;
\item each $\Phi^{\tau_\l}_{e_0}$ and $\Phi^{\tau_\l}_{e_1}$ is assigned degree $1$;
\item each $\Phi^{\sigma_{2n+1}}_{e_1e_0^{2n}}$ is assigned degree $2n+1$.
\end{itemize}
Hence, for each integer $v$, we may focus on the $v\th$ graded piece to obtain a map
\begin{equation}\label{theta-d-v}
    \theta_{d, v}^\#\:(\oh(U_S)_{\leq d}[\log^\uu, \Li_1^\uu, \ldots, \Li_d^\uu])_v \to (R_{\Phi, d})_v
\end{equation}
of free finite-rank $\oh(U_S)_{\leq d}$-modules. 
\begin{definition}\label{def:deg} 
    We call an element of $\ker(\theta^{\#}_{d, v})$ a {\it polylogarithmic motivic Chabauty--Kim function} of depth $d$ and degree $v$.
\end{definition}

\subsection{Existence of Chabauty--Kim functions in high depth}

Fix a depth $d$ and a degree $v$ in the sense of \Cref{def:deg}. Both $(\oh(U_S)_{\leq d}[\log^\uu, \Li_1^\uu, \ldots, \Li_d^\uu])_v$ and $(R_{\Phi, d})_v$ have natural monomial bases, so we can view $\theta_{d, v}^\#$ as a matrix $M(\theta_{d, v}^\#)$ with coefficients in $\oh(U_S)_{\leq d}$ with respect to these bases. Define
\[\dim_{\PL}(d, v):= \mathrm{rank}_{\oh(U_S)_{\leq d}}(\oh(U_S)_{\leq d}[\log^\uu, \Li_1^\uu, \ldots, \Li_d^\uu])_v\]
and 
\[\dim_\Phi(d, v):=\mathrm{rank}_{\oh(U_S)_{\leq d}} (R_{\Phi, d})_v.\]
Given a tuple of positive integers $\alpha$ of length $\vert\alpha\vert$ and a positive integer $n$, denote the number of partitions of $n$ with respect to $\alpha$ by $p(n; \alpha):=\# \{ \beta \in (\Z_{\geq 0})^{\vert\alpha\vert} \mid \langle \alpha, \beta\rangle=n \}$. We then have
\[\dim_{\PL}(d, v) = p(v; (1, 1, 2, \ldots, d))\]
and
\[\dim_\Phi(d, v) = p(v; (\underbrace{1, \ldots, 1}_{2|S| \text{ times}}, 3, 5, \ldots, \max(k\le d : k\text{ odd})).\]
The numerology stems from the fact that $\Phi_{e_0}^{\tau_\ell}$ and $\Phi_{e_1}^{\tau_\ell}$ all have degree $1$ for $\ell\in S$, while $\Phi^{\sigma_{2k+1}}_{e_1e_0^{2k}}$ has degree $2k+1$.

\begin{remark}\label{clever}
    The matrix $M(\theta_{d, v}^\#)$ is of size $\dim_\Phi(d, v) \times \dim_{\PL}(d, v)$. In particular, if $\dim_{\PL}(d, v) > \dim_\Phi(d, v)$, then its kernel is necessarily non-trivial, in which case there exists a polylogarithmic motivic Chabauty--Kim function in depth $d$ and degree $v$.
\end{remark}

\begin{proposition}\label{prop:dpl-dphi}
    For all $|S|$, there exist integers $d$ and $v$ such that $\dim_{\PL}(d, v) > \dim_\Phi(d, v)$.
\end{proposition}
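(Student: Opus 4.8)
The plan is to fix a single value of $d$ (depending only on $|S|$) and then compare $\dim_{\PL}(d,v)$ and $\dim_\Phi(d,v)$ as functions of $v$, showing that the former has strictly larger polynomial growth. The formulas recorded above identify $\dim_{\PL}(d,v)$ and $\dim_\Phi(d,v)$ with the coefficients of $t^v$ in the rational generating functions
\[
\frac{1}{(1-t)^2}\prod_{k=2}^{d}\frac{1}{1-t^{k}}
\qquad\text{and}\qquad
\frac{1}{(1-t)^{2|S|}}\prod_{\substack{3\le k\le d\\ k\ \mathrm{odd}}}\frac{1}{1-t^{k}},
\]
respectively. First I would invoke the standard fact (the classical theory of denumerants, or a direct partial-fraction computation at $t=1$) that the coefficient of $t^{v}$ in $\prod_{i=1}^{m}(1-t^{a_i})^{-1}$, with the $a_i$ positive integers, is a quasi-polynomial in $v$ of degree $m-1$ with leading coefficient $1/\big((m-1)!\,a_1\cdots a_m\big)>0$. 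Applying this, the first generating function has $d+1$ factors in its denominator, so $\dim_{\PL}(d,v)$ grows like a positive constant times $v^{d}$, while the second has $2|S|+m_d$ factors, where $m_d:=\#\{k:3\le k\le d,\ k\ \mathrm{odd}\}$, so $\dim_\Phi(d,v)$ grows like a positive constant times $v^{\,2|S|+m_d-1}$.

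It then suffices to pick $d$ with $d>2|S|+m_d-1$. The natural choice is $d=4|S|$ (assuming $|S|\ge 1$; when $|S|=0$ the claim is immediate, as $\dim_\Phi(d,1)=0<2=\dim_{\PL}(d,1)$): then $m_d=2|S|-1$, so $\dim_{\PL}(4|S|,v)\sim c\,v^{4|S|}$ and $\dim_\Phi(4|S|,v)\sim c'\,v^{4|S|-2}$ with $c,c'>0$. Hence $\dim_{\PL}(4|S|,v)>\dim_\Phi(4|S|,v)$ for all sufficiently large $v$, which gives the proposition.

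I expect the only point requiring care to be the quasi-polynomiality statement together with the positivity of its leading coefficient; this is entirely standard, but if one wishes to avoid quoting it, it can be replaced by elementary two-sided estimates. For a lower bound: in a $\PL$-partition of $v$, let the parts of size $2,3,\dots,d$ each range over $\{0,1,\dots,\lfloor v/(2d^{2})\rfloor\}$ and let the part of size $1$ absorb the (automatically non-negative) remainder, giving $\dim_{\PL}(d,v)\ge(\lfloor v/(2d^{2})\rfloor+1)^{d-1}$. For an upper bound: a $\Phi$-partition of $v$ is determined by the multiplicities of its $m_d$ odd parts together with $2|S|-1$ of the $2|S|$ colours of the part $1$ (the last colour being forced), so $\dim_\Phi(d,v)\le(v+1)^{\,2|S|-1+m_d}$. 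With $d=4|S|$ the first bound is of order $v^{4|S|-1}$ and the second of order $v^{4|S|-2}$, and the conclusion again follows for $v\gg 0$.
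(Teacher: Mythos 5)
Your proof is correct, and it follows the same overall strategy as the paper: fix a depth $d$ depending only on $|S|$, compare the polynomial growth rates in $v$ of the two partition-counting functions, and observe that $d=4|S|$ (equivalently, $d>2|S|+m_d-1$) forces $\dim_{\PL}(d,v)>\dim_\Phi(d,v)$ for $v\gg 0$. The difference lies in how you establish the growth rates: the paper iterates the recursions $\dim_{\PL}(d,v)=\sum_{r}\dim_{\PL}(d-1,v-rd)$ and its $\Phi$-analogue to get the one-sided bounds $\dim_{\PL}(d,v)\gg v^{d}$ and $\dim_\Phi(d,v)\ll v^{2|S|+(d-1)/2}$, whereas you read off the exact asymptotic degrees from the generating functions via the classical denumerant (Schur/Sylvester) quasi-polynomial asymptotics, with an elementary box-counting fallback. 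Your route is slightly sharper (you get degree exactly $2|S|+m_d-1$ for $\dim_\Phi$ rather than the paper's upper bound $2|S|+m_d$, though both lead to the same threshold $d\ge 4|S|$) and it cleanly isolates the one external input; the paper's route is self-contained at the cost of cruder constants. Your separate treatment of $|S|=0$ is a welcome precaution, since the paper's bound $\dim_\Phi(1,v)=\binom{2|S|+v-1}{2|S|-1}$ degenerates there. I verified your elementary lower bound: with each of the $d-1$ multiplicities of the parts $2,\dots,d$ at most $\lfloor v/(2d^2)\rfloor$, their total weight is at most $\tfrac{d(d+1)}{2}\cdot\tfrac{v}{2d^2}\le v$, so the remainder is indeed non-negative and absorbable by a part of size $1$; this gives order $v^{d-1}$, which still beats $v^{2|S|+m_d-1}=v^{4|S|-2}$ at $d=4|S|$, as you note.
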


\begin{proof}
Let us adopt the convention that $\dim_{\PL}(d,0) = \dim_{\Phi}(d,0) = 1$. Fix $|S|$ and $d$, and let $v$ be arbitrarily large. We have:
\[
\dim_{\PL}(d, v) = \sum_{r=0}^{[v/d]}\dim_{\PL}(d-1, v-rd) \gg v\dim_{\PL}(d-1, v/2),
\]
where we implicitly used the monotonicity in $v$ of $\dim_{\PL}(d, v)$. We may continue iteratively to obtain the bound:
\[
\dim_{\PL}(d, v) \gg v^{d-1}\dim_{\PL}(1, v/2^{d-1}) \gg v^d,
\]
where the last inequality is due to the fact that $\dim_{\PL}(1, n) = n + 1$.

On the other hand, if $d\ge 1$ is even, then $\dim_{\Phi}(d, v) = \dim_{\Phi}(d-1, v)$. Otherwise:
\[
\dim_{\Phi}(d, v) = \sum_{r=0}^{[v/d]}\dim_{\Phi}(d-2, v-rd) \ll v\dim_{\Phi}(d-2, v).
\]
Continuing iteratively, we reach the bound:
\[
\dim_{\Phi}(d, v) \ll v^{(d-1)/2}\dim_{\Phi}(1, v) \ll v^{2|S| + (d-1)/2},
\]
where the last inequality is due to the fact that $\dim_{\Phi}(1, v) = {{2|S| + v - 1}\choose {2|S| - 1}} \ll v^{2|S|}$. 

If we fix $d$ to be a large enough constant so that $d > 2|S| + (d-1)/2$, then for $v$ large enough we have
$\dim_{\PL}(d, v) > \dim_{\Phi}(d, v)$.
\end{proof}

\begin{proposition}\label{prop:dim}   
Suppose $\vert S\vert =2$ and $d<6$. Then for all $v \ge 1$, we have
\[\dim_{\PL}(d,v)\leq \dim_{\Phi}(d,v).\]
\end{proposition}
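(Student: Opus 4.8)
The statement to prove is that for $|S| = 2$ and $d < 6$, we have $\dim_{\PL}(d, v) \leq \dim_\Phi(d, v)$ for all $v \geq 1$. Since $|S| = 2$, we have $2|S| = 4$, so $\dim_\Phi(d, v) = p(v; (1,1,1,1,3,5,\ldots))$ where the odd parts run up to $d$. Because $\dim_\Phi(d, v) = \dim_\Phi(d-1, v)$ whenever $d$ is even, it suffices to check the inequality for $d = 1, 3, 5$ (the cases $d = 2, 4$ reduce to $d = 1, 3$ respectively on the $\Phi$-side, while the $\PL$-side only grows with $d$, so in fact one must be slightly careful: $\dim_{\PL}(2,v) \geq \dim_{\PL}(1,v)$, so the case $d=2$ does \emph{not} follow formally from $d=1$). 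The cleanest approach is to compare generating functions directly. We have
\[
\sum_{v \geq 0} \dim_{\PL}(d, v)\, t^v = \frac{1}{(1-t)^2} \prod_{k=2}^{d} \frac{1}{1-t^k}, \qquad
\sum_{v \geq 0} \dim_\Phi(d, v)\, t^v = \frac{1}{(1-t)^4} \prod_{\substack{3 \leq k \leq d \\ k \text{ odd}}} \frac{1}{1-t^k}.
\]
The plan is to show, for each $d \in \{1, 2, 3, 4, 5\}$, that the difference of these two power series has non-negative coefficients.

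The key step is a coefficientwise (i.e., in $\mathbb{N}[[t]]$) comparison. For $d = 1$: the $\PL$ side is $1/(1-t)^2$ and the $\Phi$ side is $1/(1-t)^4$, and $1/(1-t)^4 - 1/(1-t)^2 = \frac{1}{(1-t)^2}\left(\frac{1}{(1-t)^2} - 1\right)$, which has non-negative coefficients since $\frac{1}{(1-t)^2} - 1 = 2t + 3t^2 + \cdots$ does. For $d = 2$: $\PL$ side is $\frac{1}{(1-t)^2(1-t^2)}$, $\Phi$ side is $\frac{1}{(1-t)^4}$; it suffices to note $\frac{1}{1-t^2} = \frac{1}{(1-t)(1+t)}$ so the $\PL$ side equals $\frac{1}{(1-t)^3(1+t)}$, and $\frac{1}{(1-t)^4} - \frac{1}{(1-t)^3(1+t)} = \frac{1}{(1-t)^3}\left(\frac{1}{1-t} - \frac{1}{1+t}\right) = \frac{1}{(1-t)^3}\cdot\frac{2t}{1-t^2} = \frac{2t}{(1-t)^4(1+t)}$, which one checks has non-negative coefficients (expand $\frac{1}{1+t} = \sum (-1)^n t^n$ against $\frac{2t}{(1-t)^4}$ and verify the alternating sum stays non-negative, or better, write $\frac{2t}{(1-t)^4(1+t)} = \frac{2t}{(1-t)^3(1-t^2)}$ which is manifestly in $\mathbb{N}[[t]]$). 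The cases $d = 3, 4, 5$ proceed by the same device: clear the factor $\frac{1}{1-t^k}$ for even-degree comparison or use $\frac{1}{1-t^{2j}} = \frac{1}{(1-t^j)(1+t^j)}$ to rewrite everything over a common denominator of the form $\prod(1 - t^{a_i})$ with positive exponents, reducing each inequality to checking that an explicit rational function of the shape $\frac{t^m \cdot (\text{polynomial with non-negative coefficients})}{\prod_i (1 - t^{a_i})}$ — or a finite correction plus such a term — lies in $\mathbb{N}[[t]]$.

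I expect the main obstacle to be the bookkeeping for $d = 5$, where the $\PL$ side has denominator $(1-t)^2(1-t^2)(1-t^3)(1-t^4)(1-t^5)$ and the $\Phi$ side has denominator $(1-t)^4(1-t^3)(1-t^5)$; after cancelling the common $\frac{1}{(1-t^3)(1-t^5)}$ one must compare $\frac{1}{(1-t)^2(1-t^2)(1-t^4)}$ with $\frac{1}{(1-t)^4}$, i.e.\ show $\frac{1}{(1-t)^4} - \frac{1}{(1-t)^2(1-t^2)(1-t^4)} \in \mathbb{N}[[t]]$. Rewriting $\frac{1}{(1-t^2)(1-t^4)} = \frac{1}{(1-t)^2(1+t)(1+t^2)(1+t)} $ is messy; instead I would argue combinatorially: $\dim_{\PL}(5,v) = p(v;(1,1,2,4))$ (the part $3$ and a part $1$ combine to give nothing new beyond bookkeeping — actually keep all parts $(1,1,2,3,4,5)$), counts partitions of $v$ into parts from a multiset, and I would exhibit an explicit injection from the set counted by $\dim_{\PL}(d,v)$ into the set counted by $\dim_\Phi(d,v)$: replace each part equal to an even number $2j \leq d$ by the pair of parts $(j, j)$ drawn from the four available $1$'s when $j = 1$... this does not quite work directly, so the robust route is the generating-function positivity check, carried out explicitly for the finitely many $d$. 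Since only $d \in \{1,2,3,4,5\}$ occur, each reduces to verifying non-negativity of the coefficients of one fixed rational function, which can be done by the standard trick of writing it with all-positive denominator exponents; I would present these five short verifications and note that monotonicity of $\dim_{\PL}(d,v)$ in $d$ handles nothing extra since we check every $d < 6$ individually.
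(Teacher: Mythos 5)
Your generating-function route is genuinely different from the paper's and is viable, but as written it has a gap at precisely the decisive case. The computations for $d=1$ and $d=2$ are complete and correct, and $d=3$ reduces to $d=2$ after cancelling the common factor $1/(1-t^3)$ (legitimate, since multiplying a series with non-negative coefficients by $1/(1-t^3)$ preserves non-negativity of coefficients). But for $d=4$ and $d=5$ everything reduces, after cancelling $1/(1-t^3)$ resp.\ $1/((1-t^3)(1-t^5))$, to the single assertion
\[
\frac{1}{(1-t)^4}-\frac{1}{(1-t)^2(1-t^2)(1-t^4)}\in\mathbb{N}[[t]],
\]
and this is exactly the step you never carry out: you begin a combinatorial injection, abandon it (``this does not quite work directly''), and fall back on an unexecuted ``standard trick''. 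Since this is the only non-routine case, the argument is incomplete as it stands. The gap is easy to fill: factor $1/(1-t)^2$ out of both terms, so that it suffices to show $\frac{1}{(1-t)^2}-\frac{1}{(1-t^2)(1-t^4)}\in\mathbb{N}[[t]]$; the coefficient of $t^n$ in the first series is $n+1$, while in the second it is $0$ for $n$ odd and $\lfloor n/4\rfloor+1$ for $n$ even, so the difference is non-negative. (Equivalently, $(a,b,c,e)\mapsto(a,b,c,c+4e)$ injects the solutions of $a+b+2c+4e=v$ into the solutions of $a'+b'+c'+d'=v$.) A small slip along the way: $\dim_{\PL}(5,v)=p(v;(1,1,2,3,4,5))$, not $p(v;(1,1,2,4))$; the tuple $(1,1,2,4)$ is only what remains after the cancellation.

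The paper avoids the case-by-case analysis altogether. It first notes $\dim_{\PL}(5,v)=\sum_{k}\dim_{\PL}(4,v-5k)$ and $\dim_{\Phi}(5,v)=\sum_{k}\dim_{\Phi}(3,v-5k)$, reducing the whole proposition to the single inequality $\dim_{\PL}(4,v)\le\dim_{\Phi}(3,v)$, and then proves that by the explicit injection $(a,b,c,d,e)\mapsto(a+c+2d+e,\,b,\,c,\,d,\,e)$ from solutions of $a+b+2c+3d+4e=v$ to solutions of $a'+b'+c'+d'+3e'=v$. Your framework is more systematic and would adapt to other values of $|S|$ and $d$, but each positivity check must actually be closed; the paper's single injection does the whole job in one line once found.
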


\begin{proof}
Note that 
\[
\dim_{\PL}(5,v) = \sum_{k=0}^{[v/5]}\dim_{\PL}(4,v-5k) \qquad \text{ and } \qquad \dim_{\Phi}(5,v) = \sum_{k=0}^{[v/5]}\dim_{\Phi}(3,v-5k).
\]
Hence, it suffices to prove that for every $v \ge 0$, one has:
\[
\dim_{\PL}(4,v) \le \dim_{\Phi}(3,v).
\]
On the one hand, $\dim_{\PL}(4,v)$ counts the number of ways to express $v$ as a sum of distinct weights of the form $(1,1,2,3,4)$, with repetitions and without order, while on the other hand $\dim_{\Phi}(3,v)$ counts the number of ways to express $v$ as a sum of distinct weights of the form $(1,1,1,1,3)$, under similar constraints. A $\PL$-partition of $v$ is a vector $(a,b,c,d,e)$ of non-negative integers, having the property that $a + b + 2c + 3d + 4e = v$. One way to prove the inequality $\dim_{\PL}(4,v) \le \dim_{\Phi}(3,v)$ is to produce an injection $(a,b,c,d,e)\mapsto (a',b',c',d',e')$ from the set of $\PL$-partitions of an arbitrary fixed $v$ in depth $4$ to the set of $\Phi$-partitions of the same $v$ in depth $3$. An example of such an injection is given by mapping a vector $(a,b,c,d,e)$ to $(a + \Delta(a,b,c,d,e), b,c,d,e)$, where
\[
\Delta(a,b,c,d,e) := v - a - b - c - d - 3e.
\]
\end{proof}

When $|S| = 1$, we already obtain the inequality 
\begin{equation}\label{eq:polylog-inequality}
   \dim_{\PL}(d, v) > \dim_\Phi(d, v) 
\end{equation}
when $d = 2$ and $v = 2$. This phenomenon explains the relative ease of solving the geometric step when $|S| = 1$ (see \Cref{sec:s=1}). When $|S| = 2$, the following table exhibits the first degree $v$ at which the inequality \eqref{eq:polylog-inequality} becomes true in depth $d$ for $1\leq d\leq 30$:

\begin{centering}
\begin{table}[H]
\begin{tabular}{| c | c | c | c |}
\hline
$d$ & $v$ & $\dim_{\Phi}(d,v)$ & $\dim_{\PL}(d,v)$ \\
\hline
1 & - & - & - \\
2 & - & - & - \\
3 & - & - & - \\
4 & - & - & - \\
5 & - & - & - \\
6 & 251 & 622565228 & 622894943 \\
7 & 291 & 9727962025 & 9751434234 \\
8 & 99 & 21381332 & 21582623 \\
9 & 109 & 87699272 & 87913253 \\
10 & 76 & 9681421 & 9802462 \\
11 & 82 & 25152148 & 25606281 \\
12 & 68 & 7495018 & 7506398 \\
13 & 72 & 14679671 & 14817938 \\
14 & 65 & 7354311 & 7370562 \\
15 & 68 & 12174636 & 12339732 \\
\hline

\end{tabular}
\qquad \qquad
\begin{tabular}{| c | c | c | c |}
\hline
$d$ & $v$ & $\dim_{\Phi}(d,v)$ & $\dim_{\PL}(d,v)$ \\
\hline
16 & 64 & 7960970 & 8045514 \\ 
17 & 66 & 11301646 & 11463717 \\
18 & 64 & 9050983 & 9286340 \\
19 & 65 & 11108926 & 11275641 \\
20 & 63 & 8824385 & 8838834 \\
21 & 64 & 10558940 & 10574205 \\
22 & 63 & 9384203 & 9394631 \\
23 & 64 & 11044181 & 11134313 \\
24 & 64 & 11044181 & 11347166 \\
25 & 64 & 11399096 & 11523873 \\
26 & 64 & 11399096 & 11670040 \\
27 & 64 & 11654983 & 11790526 \\
28 & 64 & 11654983 & 11889539 \\
29 & 64 & 11837155 & 11970650 \\
30 & 64 & 11837155 & 12036909 \\
\hline
\end{tabular}
\caption{The first degree $v$ for which  $\dim_{\PL}(d, v) > \dim_\Phi(d, v)$ when $|S|=2$.}\label{tab:s=1}
\end{table}
\end{centering}

This entries of this table were calculated using SageMath \cite{sagemath}. The code is available at our \href{https://github.com/Ariel-Z-Weiss/polylog-chabauty-kim}{Github repository}.

The blank rows in depths $d=1,2,3,4,5$ for $\vert S\vert =2$ indicate that the inequality $\dim_{\PL}(d, v) > \dim_\Phi(d, v)$ is never satisfied, in agreement with \Cref{prop:dim}. Note that the combination of \Cref{clever} with the row $d=6$ of \Cref{tab:s=1} implies the existence of a motivic Chabauty--Kim function of depth $6$ and weight $251$.

\subsection{A naive approach: computing the kernel of $\theta^{\#}_{d, v}$}

The matrix $M(\theta^{\#}_{d, v})$ has size $\dim_\Phi(d, v)\times \dim_{\PL}(d, v)$ with coefficients in $\oh(U_S)_{\leq d}$ and by \Cref{prop:dpl-dphi}, once $d$ and $v$ are large enough, it is guaranteed to have a non-trivial kernel. Hence, one could attempt to find Chabauty--Kim functions by explicitly computing these matrices.

\subsubsection{The case $|S|= 1$}\label{sec:s=1}

In the case $\vert S\vert=1$, this approach works well. We are guaranteed to find a Chabauty--Kim function already in depth $2$ and degree $2$. In this case, one can write down the corresponding size $3 \times 4$ matrix $M(\theta_{d, v}^\#)$ by hand and compute the kernel. 

For the sake of illustration, we perform this calculation. Write $\tau\in \Sigma_{-1}$ and $\sigma\in \Sigma_{-3}$. Then the map $\theta^{\#}$ in depth $2$ is given by:
\begin{align*}
    \theta^{\#}(\log^{\fu}) & = f_{\tau}\Phi_{e_0}^{\tau} \\
    \theta^{\#}(\Li_1^{\fu}) & = f_{\tau}\Phi_{e_1}^{\tau} \\
    \theta^{\#}(\Li_2^{\fu}) & = \frac{1}{2}f_{\tau}^2\Phi_{e_0}^{\tau}\Phi_{e_1}^{\tau}.
\end{align*}
Bases for the domain and codomain of $\theta^\#_{2, 2}$ are given by 
\[
\{ (\log^{\fu})^2, \log^{\fu}\Li_1^{\fu}, (\Li_1^{\fu})^2, \Li_2^{\fu} \} \quad \text{ and } \quad \{ (\Phi_{e_0}^{\tau})^2, (\Phi_{e_1}^{\tau})^2, \Phi_{e_0}^{\tau}\Phi_{e_1}^{\tau}  \}.
\]
With respect to these bases, we obtain the matrix 
\[
M(\theta_{2,2}^\#)=\left( \begin{matrix} f_\tau^2 & 0 & 0 & 0 \\ 0 & 0 & f_\tau^2 & 0 \\ 0 & f_\tau^2 & 0 & f_\tau^2/2 \end{matrix} \right).
\]
The kernel of this matrix is generated by the function \eqref{F122} 
\begin{equation*}
F^{\vert 1\vert}_{2,2}=\Li_2^{\fu}-\frac{1}{2}\log^{\fu}\Li_1^{\fu},
\end{equation*}
which belongs to $\mathcal{I}_{S, 2}^{\PL}$ when $\vert S\vert =1$. This is the polylogarithmic motivic Chabauty--Kim function of depth $2$ and degree $2$ constructed and studied in \cite{DCW2}. In \cite{CDC1}, the geometric step for $\vert S\vert =1$ is carried out in depth $4$ and degree $4$ by hand, resulting in a depth $4$ polylogarithmic motivic Chabauty--Kim function
\[
F^{\vert 1 \vert}_{4,4}=f_\sigma f_\tau \Li_4^{\fu}-f_{\sigma\tau}\log^{\fu}\Li_3^{\fu}-\frac{(\log^{\fu})^3 \Li_1^{\fu}}{24}(f_\sigma f_\tau - 4f_{\sigma\tau})\in \mathcal{I}_{S, 4}^{\PL}.
\]

\subsubsection{The case $|S|\ge 2$}

One could attempt a similar strategy in the general case, however, it quickly proves computationally infeasible. The matrix $M(\theta^{\#}_{d, v})$ has coefficients in the shuffle algebra $\oh(U_S)_{\leq d}$. As an abstract algebra, the
shuffle algebra $\oh(U_S)$ is isomorphic to the free commutative algebra $\Q[\mathcal{L}]$ generated by the set $\mathcal{L}$ of \emph{Lyndon words} of $\Sigma$ over $\Q$~\cite[\S 4 Thm. (i)]{LYNDON}. We may thus view $M(\theta^{\#}_{d, v})$ as a matrix with coefficients in a polynomial ring over $\Q$.

When $\vert S\vert=2$, by \Cref{tab:s=1}, the minimal matrix size at which \eqref{eq:polylog-inequality} holds is for $(d, v)=(14, 65)$. When $d = 14$, $296$ distinct Lyndon words show up in the image of $\theta^{\#}_d$. Thus, this brute force approach requires computing the kernel of a matrix of size $7354311 \times 7370562$ over the ring $\Q[X_1, \ldots, X_{296}]$, which is infeasible.

A key contribution of this paper, which we outline in the next section, is a method to produce polylogarithmic motivic Chabauty--Kim functions without computing this kernel using matrix operations. In the remainder of this section, we show how the naive method can nevertheless be salvaged to give an upper bound on the dimension of the kernel of $M(\theta^{\#}_{d, v})$.

\subsection{Non-existence of Chabauty--Kim functions in low degree}

Recall that $M(\theta^{\#}_{d, v})$ is a matrix over the shuffle algebra $\oh(U_S)_{\leq d}$, which we can view as a polynomial ring generated by the Lyndon words $\mathcal{L}_{\geq -d}$ of degree $\geq -d$ on the generators $\Sigma$. Write $\oh(U_S)_{\leq d} = \Q[\mathcal{L}_{\geq -d}]= \Q[X_1, \ldots, X_n]$, where $n$ is the number of Lyndon words on the set $\Sigma$ of degree $\geq -d$.

Let $x \in \Z^n$. Then, we can evaluate $M(\theta^{\#}_{d, v})$ at $x$ to obtain a matrix $M(\theta^{\#}_{d, v})(x)$ of the same size as $M(\theta^{\#}_{d, v})$, but with coefficients in $\Q$. Observe that if $f\in \ker (\theta^{\#}_{d, v})\sub \oh(U_S)_{\leq d}[\log^\uu, \Li_1^\uu, \ldots, \Li_d^\uu]$, then its evaluation $f(x) \in \Q[\log^\uu, \Li_1^\uu, \ldots, \Li_d^\uu]$ is in the kernel of $M(\theta^{\#}_{d, v})(x)$.

\begin{proposition}\label{prop:guestimate}
    For all $x\in \Z^n$, we have
    \[\mathrm{rank}_{\oh(U_S)_{\leq d}} \ker M(\theta^{\#}_{d, v})\le \dim_\Q\ker M(\theta^{\#}_{d, v})(x).\]
\end{proposition}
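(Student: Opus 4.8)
The plan is to reduce the inequality to a general linear-algebra fact about the rank of a matrix over a polynomial ring versus the rank of its integer specialisations, combined with the observation (already noted in the excerpt) that $\ker(\theta^{\#}_{d,v})$ is a free $\oh(U_S)_{\leq d}$-module, being the kernel of a morphism of free modules over an integral domain. Write $A := \oh(U_S)_{\leq d} = \Q[X_1,\dots,X_n]$, let $m := \dim_\Phi(d,v)$ and $N := \dim_{\PL}(d,v)$, and let $M := M(\theta^{\#}_{d,v}) \in A^{m\times N}$. Since $A$ is a Noetherian integral domain, the submodule $\ker M \subseteq A^N$ is finitely generated and torsion-free, hence free (because $A$ is in fact a polynomial ring, but even over a general domain one can pass to the fraction field), so $\mathrm{rank}_A \ker M$ is well-defined and equals $N - \mathrm{rank}_{\mathrm{Frac}(A)} M$, where $\mathrm{rank}_{\mathrm{Frac}(A)} M$ is the size of the largest non-vanishing minor of $M$.

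First I would record the elementary fact that specialisation cannot increase the rank of a matrix: for any $x \in \Z^n$, if a $k\times k$ minor of $M$ vanishes identically in $A$ then so does its evaluation at $x$, hence $\mathrm{rank}_\Q M(x) \le \mathrm{rank}_{\mathrm{Frac}(A)} M$. Equivalently, via rank–nullity over $\Q$ for $M(x): \Q^N \to \Q^m$ and over $\mathrm{Frac}(A)$ for $M$, this reads
\[
\dim_\Q \ker M(x) = N - \mathrm{rank}_\Q M(x) \;\ge\; N - \mathrm{rank}_{\mathrm{Frac}(A)} M = \mathrm{rank}_A \ker M,
\]
which is exactly the claimed inequality. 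This is the whole argument: the left-hand side of the proposition is $\dim_\Q \ker M(x)$ and the right-hand side is $\mathrm{rank}_A \ker M$.

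The one point that needs a sentence of justification is \emph{why} $\mathrm{rank}_A \ker M = N - \mathrm{rank}_{\mathrm{Frac}(A)} M$, i.e.\ that forming the kernel commutes with tensoring up to the fraction field and that the rank is additive along the exact sequence $0 \to \ker M \to A^N \xrightarrow{M} A^m$. Both hold because localisation is exact (so $\ker M \otimes_A \mathrm{Frac}(A) = \ker(M \otimes_A \mathrm{Frac}(A))$) and because $\mathrm{rank}_A(-)$, defined as the $\mathrm{Frac}(A)$-dimension after tensoring, is additive on short exact sequences of finitely generated $A$-modules. The freeness of $\ker M$ stated in the excerpt is not strictly needed for the inequality — only the well-definedness of its rank — but it makes the bookkeeping cleanest.

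I do not expect any genuine obstacle here: the statement is a soft consequence of the semicontinuity of matrix rank under specialisation. The only thing to be careful about is the direction of the inequality (specialisation makes rank drop, hence makes the nullity go \emph{up}), and the fact that $x$ is an arbitrary integer point, so the bound holds uniformly — in particular one may take a \emph{generic} $x$, which will realise equality and explains \Cref{rem:Zar}; but for the proposition as stated, the one-line rank comparison above suffices.
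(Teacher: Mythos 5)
Your proposal is correct and takes essentially the same approach as the paper: both arguments boil down to the semicontinuity of matrix rank under specialisation at a $\Q$-point of $\spec \oh(U_S)_{\leq d}$, combined with rank–nullity. In fact your justification via vanishing minors is the careful version of the paper's one-line appeal to persisting linear dependences (which, taken literally, needs the extra observation that a nonzero dependence vector could specialise to zero — the minor argument sidesteps this).
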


\begin{proof}
    The rank of the kernel of a matrix with coordinates in the shuffle algebra can only increase upon specialisation of the matrix to a $\Q$-point of $\spec \oh(U_S)_{\leq d}$. Indeed, any non-trivial linear dependence between rows before substitution persists after substitution.
\end{proof}

\begin{remark}\label{rem:Zar}
    Conversely, for a general point $x\in \Z^n$, we have
    \[\mathrm{rank}_{\oh(U_S)_{\leq d}} \ker M(\theta^{\#}_{d, v}) = \dim_\Q\ker M(\theta^{\#}_{d, v})(x).\]
    Indeed, in order for the inequality $\mathrm{rank}_{\oh(U_S)_{\leq d}} \ker M(\theta^{\#}_{d, v})< \dim_\Q\ker M(\theta^{\#}_{d, v})(x)$ to hold, the rank $r$ of $M(\theta^{\#}_{d, v})$ must be greater than the rank of $M(\theta^{\#}_{d, v})(x)$. The point $x$ must therefore be a root of all the non-trivial $r\times r$ minors of $M(\theta^{\#}_{d, v})$. Thus, $x$ must lie in the intersection of finitely many hypersurfaces in $\A^n$, so in particular, in a Zariski closed subset.
\end{remark}

The utility of the observations made in Proposition \ref{prop:guestimate} and Remark \ref{rem:Zar} is that, while computing the kernel of $M(\theta^{\#}_{d, v})$ is computationally hard, computing the dimension of the kernel of $M(\theta^{\#}_{d, v})(x)$ for any $x\iZ^n$ is computationally easy.

Motivated by \Cref{prop:guestimate}, we have implemented the following algorithm in SageMath \cite{sagemath}. It takes as input a tuple of integers $(s, d, v)$, where $s = |S|$, and outputs an integer $r$, which is the dimension of $\ker (M(\theta^{\#}_{d, v})(x))$ for a random $x\iZ^n$. By \Cref{prop:guestimate}, if the output is $0$, then there are no Chabauty--Kim functions in depth $d$ and degree $v$ for $|S|=s$. Moreover, if the output is $r$ for several iterations of the same tuple $(s, d,v)$, then it is likely that the dimension of $M(\theta^{\#}_{d, v})(x)$ is exactly $r$.

\begin{algorithm}[Upper Bound Algorithm]\label{algo}
    \hfill
    
    \textbf{Input: }a triple of integers $(s,d,v)$, where $s$ is the number of primes in $S$, $d$ is the depth, and $v$ is the degree.

    \textbf{Process:}
    \begin{enumerate}
        \item Compute all monomials of degree $v$ in the variables $\log^\uu, \Li_1^\uu, \ldots, \Li_d^{\uu}$. These monomials form a basis of $(\oh(U_S)_{\leq d}[\log^\uu, \Li_1^\uu, \ldots, \Li_d^\uu])_v$ as an $\oh(U_S)_{\leq d}$-module.
        \item Compute all monomials of degree $v$ in the variables $\Phi_{\lambda}^\rho$ for $\rho\in \{\tau_\l, \sigma_{2k+1}\}$ and $\lambda = e_0, e_1, e_1e_0^{2k}$. These monomials form a basis of $(R_{\Phi, d})_v$ as an $\oh(U_S)_{\leq d}$-module.
        \item Compute the set of Lyndon words $\mathcal{L}_{\geq -d}$ of $\Sigma$ to express $\oh(U_S)_{\leq d}$ as a polynomial algebra $\Q[\mathcal{L}_{\geq -d}]=\Q[X_1, \ldots, X_n]$.
        \item Compute $\theta_{d,v}^\#(\log^\uu)$, $\theta_{d,v}^\#(\Li_1^\uu),\ldots, \theta_{d,v}^\#(\Li_d^\uu)$ as elements of $\Q[X_1, \ldots, X_n]$. \footnote{This step is the bottleneck of the algorithm, since it requires converting all words of length $d$ in $2$ letters into the dual PBWL basis.}
        \item Choose a random $n$-tuple $x\iZ^n$ and evaluate each of the polynomials $\theta_{d,v}^\#(\log^\uu))$, $\theta_{d,v}^\#(\Li_i^\uu)$ at $x$.
        \item Build the matrix $M(\theta_{d,v}^\#)(x)$ with coefficients in $\Q$.
        \item Compute the dimension $r$ of the kernel of this matrix, and return $r$.
    \end{enumerate}
    \textbf{Output: }an integer $r$ representing an estimate, and provably an upper bound, for the rank of the kernel of $M(\theta_{d,v}^\#)$.
\end{algorithm}

We have implemented this algorithm in SageMath \cite{sagemath}, and our code is available at our \href{https://github.com/Ariel-Z-Weiss/polylog-chabauty-kim}{Github repository}. Using the algorithm, we obtain the following result:

\begin{theorem}\label{thm:no}
    Let $|S| = 2$. There are no polylogarithmic motivic Chabauty--Kim functions for $(d, v)$ with $v < 18$.
\end{theorem}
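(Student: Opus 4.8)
The plan is to reduce the statement to a single finite computation, namely $\ker(\theta^{\#}_{17,17}) = 0$, and then to invoke \Cref{prop:guestimate} together with \Cref{algo}.

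\textbf{Step 1: reduce to depth and degree $17$.} I would first exploit that, by \Cref{theta}, $\theta^{\#}$ is a homomorphism of graded $\oh(U_S)$-algebras, so $\ker(\theta^{\#})$ is a graded ideal of the polynomial ring $\oh(U_S)[\log^{\fu}, \Li_1^{\fu}, \Li_2^{\fu}, \ldots]$. This yields three elementary facts. (i) Since that ring is an integral domain and $\log^{\fu}\neq 0$, multiplication by $\log^{\fu}$ embeds $\ker(\theta^{\#}_{d,v})$ into $\ker(\theta^{\#}_{d,v+1})$. (ii) For $d\le d'$, the inclusion $\oh(U_S)_{\leq d}[\log^{\fu},\ldots,\Li_d^{\fu}]\hookrightarrow \oh(U_S)_{\leq d'}[\log^{\fu},\ldots,\Li_{d'}^{\fu}]$ is compatible with $\theta^{\#}$, because the defining formulas of \Cref{theta} do not depend on the ambient depth; hence $\ker(\theta^{\#}_{d,v})\subseteq \ker(\theta^{\#}_{d',v})$. (iii) A homogeneous element of degree $v$ can only involve the variables $\log^{\fu},\Li_1^{\fu},\ldots,\Li_v^{\fu}$, with coefficients in $\oh(U_S)_{\leq v}$ (all generators of $\Lie(U_S)$ having negative degree), so $\ker(\theta^{\#}_{d,v})=\ker(\theta^{\#}_{\min(d,v),\,v})$. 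Putting these together: a nonzero polylogarithmic motivic Chabauty--Kim function of depth $d$ and degree $v$ with $v<18$ may, by (iii), be regarded as a nonzero element of $\ker(\theta^{\#}_{\min(d,v),\,v})$ with $\min(d,v)\le 17$; multiplying by $(\log^{\fu})^{17-v}$ via (i) and then enlarging the depth to $17$ via (ii) produces a nonzero element of $\ker(\theta^{\#}_{17,17})$. So it suffices to show $\ker(\theta^{\#}_{17,17})=0$.

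\textbf{Step 2: the computation.} Identify $\oh(U_S)_{\leq 17}\cong\Q[X_1,\ldots,X_n]$ using the Lyndon words of $\Sigma$ of degree $\ge -17$. By \Cref{prop:guestimate}, for every $x\in\Z^n$,
\[\mathrm{rank}_{\oh(U_S)_{\leq 17}}\ker M(\theta^{\#}_{17,17})\ \le\ \dim_{\Q}\ker M(\theta^{\#}_{17,17})(x).\]
I would then run \Cref{algo} on the input $(2,17,17)$: it assembles the matrix $M(\theta^{\#}_{17,17})$ of size $\dim_{\Phi}(17,17)\times\dim_{\PL}(17,17)$, specialises it at a concrete $x\in\Z^n$, and computes that the resulting $\Q$-matrix has trivial kernel. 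Hence $\ker(\theta^{\#}_{17,17})=0$, and by Step 1 there are no polylogarithmic motivic Chabauty--Kim functions in any depth with degree $v<18$.

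\textbf{Expected main obstacle.} Step 1 is purely formal; the difficulty lies entirely in Step 2. The hard part is not the mathematics — after specialisation it is a rank computation over $\Q$ — but the scale and bookkeeping: the matrix $M(\theta^{\#}_{17,17})$ has several thousand rows, and building it requires expanding the products that define the $\theta^{\#}(\Li_k^{\fu})$ and then rewriting the resulting shuffle products of abstract shuffle-basis elements $f_w$ in terms of the polynomial generators $X_1,\ldots,X_n$ of $\oh(U_S)_{\leq 17}$. This is precisely what the SageMath implementation of \Cref{algo} handles; conceptually, the only point worth stressing is that, by \Cref{prop:guestimate}, evaluating at a single (random) integer point already forces the full kernel over the shuffle algebra to vanish, so no symbolic computation over $\Q[X_1,\ldots,X_n]$ is required.
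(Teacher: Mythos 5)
Your proposal is correct and follows essentially the same route as the paper: reduce everything to the vanishing of a single kernel and certify that vanishing by specialising the matrix at an integer point via \Cref{prop:guestimate} and \Cref{algo}. The one concrete difference is in Step 2: the paper runs the algorithm on the smaller input $(2,16,17)$ and then disposes of the variable $\Li_{17}^{\fu}$ by the elementary observation that the cocycle $\Phi^{\sigma_{17}}_{e_1e_0^{16}}$ occurs in $\theta^{\#}(\Li_{17}^{\fu})$ but in no $\theta^{\#}(\Li_n^{\fu})$ with $n<17$, so its coefficient in any kernel element must vanish; you instead run the larger $(2,17,17)$ computation directly, which is equally valid but forgoes that computational saving. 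Your Step 1, which the paper leaves implicit, is a welcome addition; only note that in the paper's grading the coefficients of a degree-$v$ homogeneous element live in all of $\oh(U_S)_{\le d}$ (the coefficient ring sits in degree $0$), so the reduction from depth $d>v$ to depth $v$ should be phrased via the fact that the matrix entries lie in $\oh(U_S)_{\le v}$ and $\oh(U_S)_{\le d}$ is faithfully flat over it, rather than by constraining the coefficients themselves.
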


\begin{proof}
    We ran Algorithm \ref{algo} with $(s,d,v)=(2,16,17)$, and found the output to be $r = 0$. We note that this is sufficient to show that there are no motivic Chabauty--Kim functions for $(d,v)=(17, 17)$, since the polynomial $\Li_{17}^\uu$ cannot appear in a motivic Chabauty--Kim function of degree $17$: the cocycle $\Phi^{\sigma_{17}}_{e_1e_0^{16}}$ appears in $\theta^{\sharp}(\Li_{17}^\uu)$ but not in $\theta^{\sharp}(\Li_{n}^\uu)$ for $n < 17$. 
\end{proof}

    The computation was carried out on The Ohio State University's high performance cluster, and took around $66$ hours across $96$ cores, using $432$GB of memory. The overwhelming majority of this time was spent converting words in the shuffle algebra $\O(U_S)$ into the dual PBWL basis (Poincar\'e--Birkhoff--Witt basis constructed on the Lyndon basis) using an improved version of SageMath's built-in \verb|ShuffleAlgebra.to_dual_pbw_element| method, rewritten to enable parallelisation, and to fix inefficiencies. 

\begin{remark}\label{rem:infeasible}
    The algorithm produces an output of $1$ when $(s, d, v) = (2, 6, 18)$, strongly suggesting, according to Remark \ref{rem:Zar}, that there should exist a Chabauty--Kim function in this depth and degree.
    For $(s, d, v)=(2, 6, 18)$, we have $\dim_{\PL}(d, v) = 996$ and $\dim_{\Phi}(d, v) = 4183$. Additionally, $30$ Lyndon words of degree $\geq -18$ appear in the image of $\theta^{\#}$.
    It appears computationally infeasible to directly compute the kernel of $M(\theta_{6,18}^\#)$. A heuristic argument supporting this observation, assuming Gaussian elimination for direct computation, suggests that the space complexity required to represent the entries of $M(\theta_{6,18}^\#)$ grows exponentially with each row reduction step. Indeed, suppose that, in some extreme best case scenario, each row of the matrix is made up of zeros and of polynomials of degree $2$ in a \textit{single} variable. Then, since each row reduction step requires multiplying every subsequent row by the current pivot, the degree of the $n\th$ pivot will be $2^{2^n}$. Since a polynomial of degree $N$ in one variable is represented by a $\Q$-vector of length $N+1$, we obtain a space complexity lower bound of $\Omega(2^{2^{\min(\dim_{\PL}(d, v), \dim_{\Phi}(d, v))}})$. Since time complexity is lower bounded by space complexity, our rough computational complexity estimate is $2^{2^{996}}$. To contextualise this astronomically large number, the estimated number of atoms in the observable universe is between $10^{80}$ and $10^{85}$, approximately $2^{256}$. 
\end{remark}

\begin{remark}
    Originally, it was not clear to us that a motivic Chabauty--Kim function of depth $6$ and weight $18$ existed. Running the algorithm with $(2,6,18)$ made us suspect the existence of such a function. The elimination theory performed in \Cref{chapter:resultants} gives a function of depth $6$ and degree $26$, which is a multiple of $(\log^{\mathrm{u}})^6$, and thus has a factor of degree $20$. The fact that the algorithm with $(2,6,18)$ returned $1$ made us look for an additional factor of degree $2$, which turned out to be $F^{\vert 1\vert}_{2,2}$ defined by \eqref{F122}. In this sense, Algorithm \ref{algo} guided our search. \Cref{rem:infeasible} explains and quantifies why a brute force approach to finding a function of depth $6$ and degree $18$ is infeasible (contrary to the case $\vert S\vert=1$). Finally, we note that the $(2,6,18)$ calculation runs fairly quickly on a home computer ($10$ seconds), whereas the $(2,16,17)$ calculation took $66$ hours on a cluster.
\end{remark}

\section{The geometric step via resultants}
\label{chapter:resultants}

Inspired by ideas in \cite{DCJ}, we introduce a method using resultants, whose goal is to produce a polylogarithmic motivic Chabauty--Kim function of depth $\leq d$, i.e., a non-zero element in the kernel of the homomorphism (\Cref{theta})
\[\theta^{\#}_d: \oh(U_S)_{\leq d}[\log^{\fu},\Li^{\fu}_1,...,\Li^{\fu}_{d}]\to \oh(U_S)_{\leq d}[\Phi, d]. \]
We assume from now on that $|S|=2$ and write $S=\{ p, q \}$, so that $\Sigma_{-1} = \{\tau_{p},\tau_{q}\}$. With this assumption, the output of this section is an explicit non-trivial element in $\ker(\theta^{\sharp}_{6,18})$ (Definition \ref{def:F618} and Theorem \ref{prop:F5}). 

\subsection{The equations defining the map $\theta^{\sharp}$}\label{s:equations}

Making \Cref{theta} explicit in the case $\vert S\vert=2$ gives the formulas:
\begin{align}
\label{eq:theta log for two primes}
\theta^{\#}(\log^{\fu})  & = f_{\tau_{p}} \Phi_{e_{0}}^{\tau_{p}} +  f_{\tau_{q}} \Phi_{e_{0}}^{\tau_{q}} \\ 
\label{eq:theta Li1}
\theta^{\#}(\Li_{1}^{\fu}) & = f_{\tau_{p}} \Phi_{e_{1}}^{\tau_{p}} +  f_{\tau_{q}} \Phi_{e_{1}}^{\tau_{q}} \\ 
\label{eq:theta Li2}
\theta^{\#}(\Li^{\fu}_{2}) & =  f_{\tau_{p}\tau_{p}} \Phi_{e_{1}}^{\tau_{p}} \Phi_{e_{0}}^{\tau_{p}} +  f_{\tau_{p}\tau_{q}} \Phi_{e_{1}}^{\tau_{p}} \Phi_{e_{0}}^{\tau_{q}} +  f_{\tau_{q}\tau_{p}} \Phi_{e_{1}}^{\tau_{q}} \Phi_{e_{0}}^{\tau_{p}} + f_{\tau_{q}\tau_{q}} \Phi_{e_{1}}^{\tau_{q}} \Phi_{e_{0}}^{\tau_{q}} \\ 
\label{eq:theta Li3}
\theta^{\#}(\Li^{\fu}_{3}) & = f_{\sigma_3} \Phi_{e_{1}e_{0}^{2}}^{\sigma_{3}} + \sum_{\tau_{(1)},\tau_{(2)},\tau_{(3)} \in \{\tau_{p},\tau_{q}\}} f_{\tau_{(1)}\tau_{(2)}\tau_{(3)} } \Phi_{e_{1}}^{\tau_{(1)}} \Phi_{e_{0}}^{\tau_{(2)}} \Phi_{e_{0}}^{\tau_{(3)}} \\  
\label{eq:theta Li4}
\theta^{\#}(\Li^{\fu}_{4}) & = \sum_{\tau\in \{\tau_{p},\tau_{q}\}} f_{\sigma_3 \tau} \Phi_{e_{1}e_{0}^{2}}^{\sigma_{3}} \Phi_{e_{0}}^{\tau} + \sum_{\tau_{(1)},\tau_{(2)},\tau_{(3)},\tau_{(4)} \in \{\tau_{p},\tau_{q}\}} f_{\tau_{(1)}\tau_{(2)}\tau_{(3)}\tau_{(4)}} \Phi_{e_{1}}^{\tau_{(1)}} \Phi_{e_{0}}^{\tau_{(2)}} \Phi_{e_{0}}^{\tau_{(3)}}  \Phi_{e_{0}}^{\tau_{(4)}}.
\end{align}
More generally, for $n\geq 2$, we have
\begin{multline}  \label{eq:theta Lin for two primes odd}
\theta^{\#}(\Li^{\fu}_{2n-1}) = f_{\sigma_{2n-1}}\Phi_{e_1e_0^{2n-2}}^{\sigma_{2n-1}}+ 
\sum_{\substack{ 3\leq r\text{ odd} \leq 2n-2 \\ \tau_{(r+1)}, \ldots, \tau_{(2n-1)} \in \{\tau_{p},\tau_{q}\}}} f_{\sigma_{r} \tau_{(r+1)} \dots \tau_{(2n-1)}}
 \Phi_{e_{1}e_{0}^{r-1}}^{\sigma_{r}} \Phi_{e_{0}}^{\tau_{(r+1)}} \ldots 
 \Phi_{e_{0}}^{\tau_{(2n-1)}} \\
+\sum_{\tau_{(1)} ,\ldots, \tau_{(2n-1)}\in \{\tau_{p},\tau_{q}\}} f_{\tau_{(1)} \dots \tau_{(2n-1)}} \Phi_{e_{1}}^{\tau_{(1)}}\Phi_{e_{0}}^{\tau_{(2)}} \ldots \Phi_{e_{0}}^{\tau_{(2n-1)}}, 
\end{multline}
\begin{multline}  \label{eq:theta Lin for two primes pair}
\theta^{\#}(\Li^{\fu}_{2n}) = \sum_{\tau\in \{ \tau_p, \tau_q  \}} f_{\sigma_{2n-1}\tau}\Phi_{e_1e_0^{2n-2}}^{\sigma_{2n-1}}\Phi_{e_0}^{\tau}+ 
\sum_{\substack{ 3\leq r\text{ odd} \leq 2n-2 \\ \tau_{(r+1)}, \ldots, \tau_{(2n)} \in \{\tau_{p},\tau_{q}\}}} f_{\sigma_{r} \tau_{(r+1)} \dots \tau_{(2n)}}
 \Phi_{e_{1}e_{0}^{r-1}}^{\sigma_{r}} \Phi_{e_{0}}^{\tau_{(r+1)}} \ldots 
 \Phi_{e_{0}}^{\tau_{(2n)}} \\
+\sum_{\tau_{(1)} ,\ldots, \tau_{(2n)}\in \{\tau_{p},\tau_{q}\}} f_{\tau_{(1)} \dots \tau_{(2n)}} \Phi_{e_{1}}^{\tau_{(1)}}\Phi_{e_{0}}^{\tau_{(2)}} \ldots \Phi_{e_{0}}^{\tau_{(2n)}}.
\end{multline}

\subsection{The elimination process}\label{s:elim}

We now describe the process of ``eliminating'' variables $\Phi_{\lambda}^{\rho}$, i.e., expressing them in terms of other variables $\Phi_{\lambda'}^{\rho'}$ and images under $\theta^{\#}$ in order to obtain a Chabauty--Kim function. 
The system of equations described in \Cref{s:equations} is linear with respect to the variables $\Phi^{\sigma_{2n-1}}_{e_{1}e_{0}^{2n-2}}$ and the variables $\Phi_{e_{1}}^{\tau_p}$ and $\Phi_{e_{1}}^{\tau_q}$. Consequently, we first seek to eliminate these variables. Eventually, the elimination of the variables $\Phi^{\tau_p}_{e_0}$ and $\Phi^{\tau_q}_{e_0}$ requires the use of a resultant. Throughout this section, we will be working with polynomials with coefficients in the fraction field $K=\mathrm{Frac}(\oh(U_S))$.

\subsubsection{Elimination of $\Phi^{\sigma_{3}}_{e_{1}e_{0}^{2}}$ and $\Phi^{\sigma_{5}}_{e_{1}e_{0}^{4}}$} \label{s:elimsigma}

Using equation \eqref{eq:theta Lin for two primes odd} with $n\in \{2, 3\}$, we see that
\begin{equation}  \label{eq:elimination Phi_sigma_3 for two primes}
\Phi^{\sigma_{3}}_{e_{1}e_{0}^{2}} = \frac{1}{f_{\sigma_{3}}} \bigg( 
\theta^{\#}(\Li^{\fu}_{3}) -  
\sum_{\tau_{(1)} ,\tau_{(2)}, \tau_{(3)}} f_{\tau_{(1)} \tau_{(2)} \tau_{(3)}} \Phi_{e_{1}}^{\tau_{(1)}}\Phi_{e_{0}}^{\tau_{(2)}} \Phi_{e_{0}}^{\tau_{(3)}} \bigg),
\end{equation}
\begin{equation}  \label{eq:elimination Phi_sigma_5 for two primes}
\Phi^{\sigma_{5}}_{e_{1}e_{0}^{4}} = \frac{1}{f_{\sigma_{5}}} \bigg( 
\theta^{\#}(\Li^{\fu}_{5}) -  
\sum_{\tau_{(4)}, \tau_{(5)}} f_{\sigma_{3} \tau_{(4)} \tau_{(5)}}
 \Phi_{e_{1}e_{0}^{2}}^{\sigma_{3}} \Phi_{e_{0}}^{\tau_{(4)}} 
 \Phi_{e_{0}}^{\tau_{(5)}} - \sum_{\tau_{(1)}, \ldots, \tau_{(5)}} f_{\tau_{(1)}\tau_{(2)}\tau_{(3)}\tau_{(4)}\tau_{(5)}} \Phi_{e_{1}}^{\tau_{(1)}}\Phi_{e_{0}}^{\tau_{(2)}} \ldots \Phi_{e_{0}}^{\tau_{(5)}} \bigg).
\end{equation}
By substituting \eqref{eq:elimination Phi_sigma_3 for two primes} in \eqref{eq:elimination Phi_sigma_5 for two primes}, we obtain
\begin{multline}  \label{eq:elimination Phi_sigma_5 for two primes'}
\Phi^{\sigma_{5}}_{e_{1}e_{0}^{4}} = 
\frac{1}{f_{\sigma_{5}}} \bigg( 
\theta^{\#}(\Li^{\fu}_{5}) -  
\sum_{\tau_{(4)} ,\tau_{(5)}} 
\frac{f_{\sigma_{3} \tau_{(4)} \tau_{(5)}}}{f_{\sigma_{3}}} \bigg( 
\theta^{\#}(\Li^{\fu}_{3}) -  
\sum_{\tau_{(1)} ,\tau_{(2)},\tau_{(3)}} f_{\tau_{(1)} \tau_{(2)} \tau_{(3)}} \Phi_{e_{1}}^{\tau_{(1)}}\Phi_{e_{0}}^{\tau_{(2)}} \Phi_{e_{0}}^{\tau_{(3)}} \bigg)
 \Phi_{e_{0}}^{\tau_{(4)}} 
 \Phi_{e_{0}}^{\tau_{(5)}}
 \\ - \sum_{\tau_{(1)},\ldots, \tau_{(5)}} f_{\tau_{(1)}\tau_{(2)}\tau_{(3)}\tau_{(4)} \tau_{(5)}} \Phi_{e_{1}}^{\tau_{(1)}}\Phi_{e_{0}}^{\tau_{(2)}} \Phi_{e_{0}}^{\tau_{(3)}}\Phi_{e_{0}}^{\tau_{(4)}} \Phi_{e_{0}}^{\tau_{(5)}} \bigg),
\end{multline}
which can be rewritten as
\begin{multline}  \label{eq:elimination Phi_sigma_5 for two primes' 2}
\Phi^{\sigma_{5}}_{e_{1}e_{0}^{4}} = 
\frac{1}{f_{\sigma_{5}}} 
\theta^{\#}(\Li^{\fu}_{5}) -  
\sum_{\tau_{(4)} ,\tau_{(5)}} 
\frac{f_{\sigma_{3} \tau_{(4)} \tau_{(5)}}}{f_{\sigma_{3}} f_{\sigma_5}} 
\theta^{\#}(\Li^{\fu}_{3}) \Phi_{e_{0}}^{\tau_{(4)}} 
 \Phi_{e_{0}}^{\tau_{(5)}} \\ 
 +  
\sum_{\tau_{(1)} ,\ldots, \tau_{(5)}} \frac{1}{f_{\sigma_{3}}f_{\sigma_5}} \bigg( f_{\tau_{(1)} \tau_{(2)} \tau_{(3)}} f_{\sigma_{3} \tau_{(4)} \tau_{(5)}} - f_{\tau_{(1)}\tau_{(2)}\tau_{(3)}\tau_{(4)} \tau_{(5)}} f_{\sigma_{3}}\bigg)\Phi_{e_{1}}^{\tau_{(1)}}\Phi_{e_{0}}^{\tau_{(2)}} \Phi_{e_{0}}^{\tau_{(3)}}
 \Phi_{e_{0}}^{\tau_{(4)}} 
 \Phi_{e_{0}}^{\tau_{(5)}}.
\end{multline}

\subsubsection{Elimination of $\Phi_{e_{1}}^{\tau_{p}}$ and $\Phi_{e_{1}}^{\tau_{q}}$} \label{s:elim1}

Equations \eqref{eq:theta Li1} and \eqref{eq:theta Li2} are equivalent to the linear system
\begin{equation} \label{eq:linear system}
\left( \begin{matrix} \theta^{\#}(\Li^{\fu}_1) \\ \theta^{\#}(\Li^{\fu}_2) \end{matrix} \right) = \left( \begin{matrix} f_{\tau_p} & f_{\tau_q} \\ f_{\tau_p^2}\Phi_{e_0}^{\tau_p}+f_{\tau_p\tau_q}\Phi_{e_0}^{\tau_q} & f_{\tau_q\tau_p}\Phi_{e_0}^{\tau_p}+f_{\tau_q^2}\Phi_{e_0}^{\tau_q}  \end{matrix}\right) \left( \begin{matrix} \Phi_{e_1}^{\tau_p} \\ \Phi_{e_1}^{\tau_q} \end{matrix} \right).
\end{equation}
Denoting by $\Delta$ the determinant of the $2\times 2$ matrix in \eqref{eq:linear system}, we have
\begin{equation} \label{eq: elimination Phi_e1 for two primes}
\Delta \left( \begin{matrix} \Phi_{e_1}^{\tau_p} \\ \Phi_{e_1}^{\tau_q} \end{matrix} \right)=\left( \begin{matrix} f_{\tau_q\tau_p}\Phi_{e_0}^{\tau_p}+f_{\tau_q^2}\Phi_{e_0}^{\tau_q} & -f_{\tau_q} \\ -f_{\tau_p^2}\Phi_{e_0}^{\tau_p}-f_{\tau_p\tau_q}\Phi_{e_0}^{\tau_q} &   
f_{\tau_p}
\end{matrix}\right) \left( \begin{matrix} \theta^{\#}(\Li^{\fu}_1) \\ \theta^{\#}(\Li^{\fu}_2) \end{matrix} \right).
\end{equation}

Let $f_{[\tau_q, \tau_p]}:=f_{\tau_q \tau_p}- f_{\tau_p \tau_q}$.

\begin{lemma}\label{eq:detM}
The determinant $\Delta$ is equal to
$\frac{1}{2} f_{[\tau_q, \tau_p]}\theta^{\#}(\log^{\fu})$.
\end{lemma}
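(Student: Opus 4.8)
The plan is a direct computation: expand the $2\times 2$ determinant in \eqref{eq:linear system} and simplify using the shuffle relations among the abstract shuffle basis elements $f_{\tau_p}, f_{\tau_q}, f_{\tau_p^2}, f_{\tau_q^2}, f_{\tau_p\tau_q}, f_{\tau_q\tau_p}$ of $\oh(U_S)$.

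First I would expand $\Delta$ along the first row and collect the coefficients of $\Phi_{e_0}^{\tau_p}$ and $\Phi_{e_0}^{\tau_q}$, obtaining
\[
\Delta = \bigl(f_{\tau_p}f_{\tau_q\tau_p} - f_{\tau_q}f_{\tau_p^2}\bigr)\Phi_{e_0}^{\tau_p} + \bigl(f_{\tau_p}f_{\tau_q^2} - f_{\tau_q}f_{\tau_p\tau_q}\bigr)\Phi_{e_0}^{\tau_q}.
\]
Next I would invoke the defining formula for the shuffle product on $\oh(U_S)$, which gives the three identities $f_{\tau_p}f_{\tau_p} = 2f_{\tau_p^2}$, $f_{\tau_q}f_{\tau_q} = 2f_{\tau_q^2}$, and $f_{\tau_p}f_{\tau_q} = f_{\tau_p\tau_q} + f_{\tau_q\tau_p}$. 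Substituting $f_{\tau_p^2} = \frac{1}{2}f_{\tau_p}^2$ and $f_{\tau_q^2} = \frac{1}{2}f_{\tau_q}^2$ and factoring, the first coefficient becomes $f_{\tau_p}\bigl(f_{\tau_q\tau_p} - \frac{1}{2}f_{\tau_p}f_{\tau_q}\bigr) = f_{\tau_p}\cdot\frac{1}{2}\bigl(f_{\tau_q\tau_p} - f_{\tau_p\tau_q}\bigr) = \frac{1}{2}f_{[\tau_q,\tau_p]}f_{\tau_p}$, and symmetrically the second coefficient becomes $f_{\tau_q}\bigl(\frac{1}{2}f_{\tau_p}f_{\tau_q} - f_{\tau_p\tau_q}\bigr) = \frac{1}{2}f_{[\tau_q,\tau_p]}f_{\tau_q}$.

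Finally, factoring out $\frac{1}{2}f_{[\tau_q,\tau_p]}$ and applying \eqref{eq:theta log for two primes} yields
\[
\Delta = \frac{1}{2}f_{[\tau_q,\tau_p]}\bigl(f_{\tau_p}\Phi_{e_0}^{\tau_p} + f_{\tau_q}\Phi_{e_0}^{\tau_q}\bigr) = \frac{1}{2}f_{[\tau_q,\tau_p]}\,\theta^{\#}(\log^{\fu}),
\]
as claimed. There is no real obstacle here; the one point requiring care is the bookkeeping of the shuffle relations, especially the factor $2$ in $f_{\tau_p}f_{\tau_p} = 2f_{\tau_p^2}$ coming from the two ways a repeated letter shuffles with itself --- this is exactly what produces the overall coefficient $\frac{1}{2}$.
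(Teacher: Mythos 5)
Your proposal is correct and follows essentially the same route as the paper: expand the $2\times 2$ determinant and simplify via the shuffle relations $f_{\tau_p}f_{\tau_p}=2f_{\tau_p^2}$, $f_{\tau_q}f_{\tau_q}=2f_{\tau_q^2}$, and $f_{\tau_p}f_{\tau_q}=f_{\tau_p\tau_q}+f_{\tau_q\tau_p}$. The only (cosmetic) difference is that you reduce both coefficients symmetrically to $\tfrac{1}{2}f_{[\tau_q,\tau_p]}$ and factor at the end, whereas the paper substitutes $f_{\tau_q}\Phi_{e_0}^{\tau_q}=\theta^{\#}(\log^{\fu})-f_{\tau_p}\Phi_{e_0}^{\tau_p}$ mid-computation and observes a cancellation; your bookkeeping, including the source of the factor $\tfrac12$, is accurate.
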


\begin{proof}
    Using \eqref{eq:theta log for two primes} and the shuffle equation $f_{\tau_p}f_{\tau_q}=f_{\tau_p \tau_q}+f_{\tau_q \tau_p}$, we see that
\begin{align*}
\Delta&=f_{\tau_p}(f_{\tau_q\tau_p}\Phi_{e_0}^{\tau_p}+f_{\tau_q^2}\Phi_{e_0}^{\tau_q})-f_{\tau_q}(f_{\tau_p^2}\Phi_{e_0}^{\tau_p}+f_{\tau_p\tau_q}\Phi_{e_0}^{\tau_q}) 
\\
& = f_{\tau_p}\Phi_{e_0}^{\tau_p}\left(f_{\tau_q \tau_p}-\frac{1}{2}f_{\tau_q}f_{\tau_p}\right)+f_{\tau_q}\Phi_{e_0}^{\tau_q}\left( \frac{1}{2}f_{\tau_p}f_{\tau_q}- f_{\tau_p \tau_q}\right).
\\ & =f_{\tau_p}\Phi_{e_0}^{\tau_p}\left(f_{\tau_q \tau_p}-\frac{1}{2}f_{\tau_q}f_{\tau_p}\right)+(\theta^{\#}(\log^{\fu})-f_{\tau_p}\Phi_{e_0}^{\tau_p})\left( \frac{1}{2}f_{\tau_p}f_{\tau_q}- f_{\tau_p \tau_q}\right)
\\
&=f_{\tau_p}\Phi_{e_0}^{\tau_p}\left(f_{\tau_q \tau_p}-\frac{1}{2}f_{\tau_q}f_{\tau_p} - \frac{1}{2}f_{\tau_p}f_{\tau_q}+ f_{\tau_p \tau_q}\right)+\theta^{\#}(\log^{\fu})\left( \frac{1}{2}f_{\tau_p}f_{\tau_q}- f_{\tau_p \tau_q}\right)
\\
&= 0 + \theta^{\#}(\log^{\fu})\left( \frac{1}{2}f_{\tau_q\tau_p}- \frac{1}{2}f_{\tau_p \tau_q}\right).
\end{align*}
\end{proof}

\begin{lemma}\label{lem:1p}
    We have 
    \[
    \Delta \Phi_{e_1}^{\tau_p}=\frac{1}{2}f_{[\tau_q,\tau_p]}\theta^{\#}(\Li_1^{\fu})\Phi_{e_0}^{\tau_p}-f_{\tau_q} \theta^{\#}(F^{\vert 1\vert}_{2,2}),
    \]
    where $F^{\vert 1\vert}_{2,2}$ was defined in \eqref{F122}.
\end{lemma}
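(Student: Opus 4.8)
The plan is to mirror the proof of \Cref{eq:detM}: I would start from the Cramer-rule identity \eqref{eq: elimination Phi_e1 for two primes}, use that $\theta^{\#}$ is an algebra homomorphism to bring in the factor $\theta^{\#}(F^{\vert 1\vert}_{2,2})$, and then clean up the leftover coefficient using the shuffle relations among $f_{\tau_p}$, $f_{\tau_q}$, $f_{\tau_p\tau_q}$, $f_{\tau_q\tau_p}$, $f_{\tau_q^2}$.

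First I would read off the first coordinate of the vector identity \eqref{eq: elimination Phi_e1 for two primes}, namely
\[
\Delta\,\Phi_{e_1}^{\tau_p}=\bigl(f_{\tau_q\tau_p}\Phi_{e_0}^{\tau_p}+f_{\tau_q^2}\Phi_{e_0}^{\tau_q}\bigr)\theta^{\#}(\Li^{\fu}_1)-f_{\tau_q}\,\theta^{\#}(\Li^{\fu}_2).
\]
Then, since $F^{\vert 1\vert}_{2,2}=\Li_2^{\fu}-\tfrac12\log^{\fu}\Li_1^{\fu}$ and $\theta^{\#}$ is multiplicative, I would substitute $\theta^{\#}(\Li_2^{\fu})=\theta^{\#}(F^{\vert 1\vert}_{2,2})+\tfrac12\theta^{\#}(\log^{\fu})\theta^{\#}(\Li_1^{\fu})$ and group the terms proportional to $\theta^{\#}(\Li_1^{\fu})$, obtaining
\[
\Delta\,\Phi_{e_1}^{\tau_p}=\Bigl(f_{\tau_q\tau_p}\Phi_{e_0}^{\tau_p}+f_{\tau_q^2}\Phi_{e_0}^{\tau_q}-\tfrac12 f_{\tau_q}\,\theta^{\#}(\log^{\fu})\Bigr)\theta^{\#}(\Li_1^{\fu})-f_{\tau_q}\,\theta^{\#}(F^{\vert 1\vert}_{2,2}).
\]

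The only remaining computation is to verify that the bracket equals $\tfrac12 f_{[\tau_q,\tau_p]}\Phi_{e_0}^{\tau_p}$. Expanding $\theta^{\#}(\log^{\fu})=f_{\tau_p}\Phi_{e_0}^{\tau_p}+f_{\tau_q}\Phi_{e_0}^{\tau_q}$ via \eqref{eq:theta log for two primes}, the coefficient of $\Phi_{e_0}^{\tau_q}$ is $f_{\tau_q^2}-\tfrac12 f_{\tau_q}f_{\tau_q}$, which vanishes since $f_{\tau_q}f_{\tau_q}=2f_{\tau_q^2}$, and the coefficient of $\Phi_{e_0}^{\tau_p}$ is $f_{\tau_q\tau_p}-\tfrac12 f_{\tau_p}f_{\tau_q}=f_{\tau_q\tau_p}-\tfrac12(f_{\tau_p\tau_q}+f_{\tau_q\tau_p})=\tfrac12 f_{[\tau_q,\tau_p]}$ by the shuffle relation $f_{\tau_p}f_{\tau_q}=f_{\tau_p\tau_q}+f_{\tau_q\tau_p}$. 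I do not expect any real obstacle here: this is a two-line Cramer-rule computation followed by exactly the shuffle cancellation already exploited in \Cref{eq:detM}; the only point requiring care is to rewrite $\theta^{\#}(\Li_2^{\fu})$ through $F^{\vert 1\vert}_{2,2}$ before simplifying, so that the $F^{\vert 1\vert}_{2,2}$-term is isolated cleanly.
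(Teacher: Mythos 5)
Your proposal is correct and is essentially the same computation as the paper's proof: both start from the Cramer-rule identity \eqref{eq: elimination Phi_e1 for two primes}, use $\theta^{\#}(\log^{\fu})=f_{\tau_p}\Phi_{e_0}^{\tau_p}+f_{\tau_q}\Phi_{e_0}^{\tau_q}$ together with the shuffle relations $f_{\tau_q}^2=2f_{\tau_q\tau_q}$ and $f_{\tau_p}f_{\tau_q}=f_{\tau_p\tau_q}+f_{\tau_q\tau_p}$, differing only in whether one isolates the $F^{\vert 1\vert}_{2,2}$ term before or after eliminating $\Phi_{e_0}^{\tau_q}$.
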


\begin{proof}
    Using \eqref{eq: elimination Phi_e1 for two primes}, (\ref{eq:theta log for two primes}), and the shuffle equation $f_{\tau_p}f_{\tau_q}=f_{\tau_p \tau_q}+f_{\tau_q \tau_p}$, we see that
    \begin{align*}
        \Delta \Phi_{e_1}^{\tau_p}&=\bigg(f_{\tau_q\tau_p}\Phi_{e_0}^{\tau_p}+f_{\tau_q^2}\Phi_{e_0}^{\tau_q}\bigg)\theta^{\#}(\Li_1^{\fu})-f_{\tau_q}\theta^{\#}(\Li_2^{\fu}) \\
        & = \bigg(f_{\tau_q\tau_p}\Phi_{e_0}^{\tau_p}+f_{\tau_q^2}\bigg(\frac{1}{f_{\tau_q}}(\theta^{\#}(\log^{\fu})-f_{\tau_p}\Phi_{e_0}^{\tau_p})\bigg)\bigg)\theta^{\#}(\Li_1^{\fu})-f_{\tau_q}\theta^{\#}(\Li_2^{\fu}) \\        &=\bigg(f_{\tau_q\tau_p}\Phi_{e_0}^{\tau_p}+\frac{f_{\tau_q}}{2}(\theta^{\#}(\log^{\fu})-f_{\tau_p}\Phi_{e_0}^{\tau_p})\bigg)\theta^{\#}(\Li_1^{\fu})-f_{\tau_q}\theta^{\#}(\Li_2^{\fu}) \\       &=\bigg(f_{\tau_q\tau_p}-\frac{f_{\tau_p}f_{\tau_q}}{2}\bigg)\theta^{\#}(\Li_1^{\fu})\Phi_{e_0}^{\tau_p}+\frac{f_{\tau_q}}{2}\theta^{\#}(\log^{\fu}\Li_1^{\fu})-f_{\tau_q}\theta^{\#}(\Li_2^{\fu}) \\
        &=\frac{1}{2}f_{[\tau_q,\tau_p]}\theta^{\#}(\Li_1^{\fu})\Phi_{e_0}^{\tau_p}-f_{\tau_q} \theta^{\#}\bigg( \Li_2^{\fu}-\frac{1}{2}\log^{\fu}\Li_1^{\fu} \bigg).
    \end{align*}
\end{proof}

\begin{remark}
    Since $F^{\vert 1\vert}_{2,2}$ is of degree $2<18$, we deduce by \Cref{thm:no} that $\theta^{\#}(F^{\vert 1\vert}_{2,2})\neq 0$ even though $F^{\vert 1\vert}_{2,2}\in \cI^{\PL}_{S',2}$ when $\vert S'\vert=1$. One can also compute this fact by hand.
\end{remark}

\begin{lemma}\label{lem:1q}
    We have 
    \[
    f_{\tau_q}\Delta \Phi_{e_1}^{\tau_q}=(f_{\tau_p^2\tau_q}-f_{\tau_q\tau_p^2})\theta^{\#}(\Li_1^{\fu})\Phi_{e_0}^{\tau_p}
    +\big( f_{\tau_p}f_{\tau_q}\theta^{\#}(\Li_2^{\fu})-f_{\tau_p\tau_q}\theta^{\#}(\log^{\fu}\Li_1^{\fu})\big).
    \]
\end{lemma}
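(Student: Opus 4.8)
\textbf{Proof proposal for Lemma \ref{lem:1q}.} The plan is to mirror the argument used for \Cref{lem:1p}, extracting now the \emph{second} component of the matrix identity \eqref{eq: elimination Phi_e1 for two primes} rather than the first. Reading off that component gives
\[
\Delta\,\Phi_{e_1}^{\tau_q} = \bigl(-f_{\tau_p^2}\Phi_{e_0}^{\tau_p} - f_{\tau_p\tau_q}\Phi_{e_0}^{\tau_q}\bigr)\theta^{\#}(\Li_1^{\fu}) + f_{\tau_p}\,\theta^{\#}(\Li_2^{\fu}),
\]
and I would then multiply through by $f_{\tau_q}$ so that the mixed term $f_{\tau_q}\Phi_{e_0}^{\tau_q}$ appears and can be replaced. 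Indeed, by \eqref{eq:theta log for two primes} we have $f_{\tau_q}\Phi_{e_0}^{\tau_q} = \theta^{\#}(\log^{\fu}) - f_{\tau_p}\Phi_{e_0}^{\tau_p}$, which is the same substitution that powered \Cref{eq:detM} and \Cref{lem:1p}.

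After this substitution the right-hand side becomes a sum of three types of terms: a multiple of $\Phi_{e_0}^{\tau_p}\theta^{\#}(\Li_1^{\fu})$ with coefficient $-f_{\tau_q}f_{\tau_p^2} + f_{\tau_p}f_{\tau_p\tau_q}$; a term $-f_{\tau_p\tau_q}\theta^{\#}(\log^{\fu})\theta^{\#}(\Li_1^{\fu})$; and the term $f_{\tau_p}f_{\tau_q}\theta^{\#}(\Li_2^{\fu})$. The only genuine bookkeeping step is simplifying the first coefficient: expanding the two shuffle products of a letter against a length-two word into the length-three shuffle basis, namely $f_{\tau_q}f_{\tau_p^2} = f_{\tau_q\tau_p^2} + f_{\tau_p\tau_q\tau_p} + f_{\tau_p^2\tau_q}$ and $f_{\tau_p}f_{\tau_p\tau_q} = 2f_{\tau_p^2\tau_q} + f_{\tau_p\tau_q\tau_p}$, from which the $f_{\tau_p\tau_q\tau_p}$ contributions cancel and one is left precisely with $f_{\tau_p^2\tau_q} - f_{\tau_q\tau_p^2}$. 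Finally, since $\theta^{\#}$ is a homomorphism of $\oh(U_S)$-algebras (\Cref{theta}), $\theta^{\#}(\log^{\fu})\theta^{\#}(\Li_1^{\fu}) = \theta^{\#}(\log^{\fu}\Li_1^{\fu})$, and collecting terms yields the stated formula.

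I do not expect any serious obstacle here: unlike \Cref{lem:1p}, no identification with $F^{\vert 1\vert}_{2,2}$ is needed, and the computation is a direct shuffle-algebra manipulation combined with the single substitution $f_{\tau_q}\Phi_{e_0}^{\tau_q} = \theta^{\#}(\log^{\fu}) - f_{\tau_p}\Phi_{e_0}^{\tau_p}$. The one point requiring care is keeping the signs straight in the adjugate matrix of \eqref{eq: elimination Phi_e1 for two primes} and correctly expanding the two length-three shuffles.
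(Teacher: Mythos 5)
Your proposal is correct and follows essentially the same route as the paper's proof: both extract the second row of the adjugate identity \eqref{eq: elimination Phi_e1 for two primes}, substitute $f_{\tau_q}\Phi_{e_0}^{\tau_q}=\theta^{\#}(\log^{\fu})-f_{\tau_p}\Phi_{e_0}^{\tau_p}$, and reduce the coefficient of $\Phi_{e_0}^{\tau_p}\theta^{\#}(\Li_1^{\fu})$ via the same shuffle expansions $f_{\tau_p}f_{\tau_p\tau_q}-f_{\tau_q}f_{\tau_p^2}=f_{\tau_p^2\tau_q}-f_{\tau_q\tau_p^2}$. The only cosmetic difference is that you multiply by $f_{\tau_q}$ at the outset, whereas the paper carries the $1/f_{\tau_q}$ denominator and clears it at the end.
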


\begin{proof}
    Using \eqref{eq: elimination Phi_e1 for two primes} and (\ref{eq:theta log for two primes}), we see that
    \begin{align*}
        \Delta \Phi_{e_1}^{\tau_q}&=-\bigg(f_{\tau_p^2}\Phi_{e_0}^{\tau_p}+f_{\tau_p\tau_q}\Phi_{e_0}^{\tau_q}\bigg)\theta^{\#}(\Li_1^{\fu})+f_{\tau_p}\theta^{\#}(\Li_2^{\fu}) \\
        &=-\bigg(f_{\tau_p^2}\Phi_{e_0}^{\tau_p}+\frac{f_{\tau_p\tau_q}}{f_{\tau_q}}(\theta^{\#}(\log^{\fu})-f_{\tau_p}\Phi_{e_0}^{\tau_p})\bigg)\theta^{\#}(\Li_1^{\fu})+f_{\tau_p}\theta^{\#}(\Li_2^{\fu}) \\
        &=-\bigg(\bigg(f_{\tau_p^2}-\frac{f_{\tau_p\tau_q}f_{\tau_p}}{f_{\tau_q}}\bigg)\Phi_{e_0}^{\tau_p}+\frac{f_{\tau_p\tau_q}}{f_{\tau_q}}\theta^{\#}(\log^{\fu})\bigg)\theta^{\#}(\Li_1^{\fu})+f_{\tau_p}\theta^{\#}(\Li_2^{\fu}) \\
        &=\frac{1}{f_{\tau_q}}(f_{\tau_p\tau_q}f_{\tau_p}-f_{\tau_p^2}f_{\tau_q})\theta^{\#}(\Li_1^{\fu})\Phi_{e_0}^{\tau_p}+\frac{1}{f_{\tau_q}}\bigg(f_{\tau_p}f_{\tau_q}\theta^{\#}(\Li_2^{\fu})-f_{\tau_p\tau_q}\theta^{\#}(\log^{\fu}\Li_1^{\fu})\bigg).
    \end{align*}
    The result follows from the shuffle equation
    \[
    f_{\tau_p\tau_q}f_{\tau_p}-f_{\tau_p^2}f_{\tau_q}=2f_{\tau_p^2\tau_q}+f_{\tau_p\tau_q\tau_p}-f_{\tau_p^2\tau_q}-f_{\tau_p\tau_q\tau_p}-f_{\tau_q\tau_p^2}=f_{\tau_p^2\tau_q}-f_{\tau_q\tau_p^2}.
    \]
\end{proof}

\subsubsection{Elimination of $\Phi_{e_{0}}^{\tau_{p}}$ and $\Phi_{e_{0}}^{\tau_{q}}$} \label{s:elim0}

At this stage, we will encounter non-linear polynomial equations and we will need a resultant. Let us consider equation (\ref{eq:theta Lin for two primes pair}) for $d=2,3$: 
\begin{equation} \label{eq:theta Li4 for two primes}
\theta^{\#}(\Li^{\fu}_{4}) = \sum_{\tau} f_{\sigma_3 \tau} \Phi_{e_{1}e_{0}^{2}}^{\sigma_{3}} \Phi_{e_{0}}^{\tau} + \sum_{\tau_{(1)},\ldots, \tau_{(4)}} f_{\tau_{(1)}\tau_{(2)}\tau_{(3)}\tau_{(4)}} \Phi_{e_{1}}^{\tau_{(1)}} \Phi_{e_{0}}^{\tau_{(2)}} \Phi_{e_{0}}^{\tau_{(3)}}  \Phi_{e_{0}}^{\tau_{(4)}}, 
\end{equation}
\begin{equation} \label{eq:theta Li6 for two primes}
\theta^{\#}(\Li^{\fu}_{6}) =  \sum_{\tau} f_{\sigma_5 \tau} \Phi_{e_{1}e_{0}^{5}}^{\sigma_{5}} \Phi_{e_{0}}^{\tau}
+ 
 \sum_{\tau} f_{\sigma_3 \tau_{(4)}\tau_{(5)}\tau_{(6)}} \Phi_{e_{1}e_{0}^{2}}^{\sigma_{3}} \Phi_{e_{0}}^{\tau_{(4)}}\Phi_{e_{0}}^{\tau_{(5)}}\Phi_{e_{0}}^{\tau_{(6)}}
 + \sum_{\tau_{(1)},\ldots,\tau_{(6)}} f_{\tau_{(1)}\ldots \tau_{(6)}} \Phi_{e_{1}}^{\tau_{(1)}} \Phi_{e_{0}}^{\tau_{(2)}} \ldots \Phi_{e_{0}}^{\tau_{(6)}}. 
\end{equation}

Using Sections \ref{s:elimsigma} and \ref{s:elim1}, we can eliminate the variables $\Phi^{\sigma_{3}}_{e_{1}e_{0}^{2}}$, $\Phi^{\sigma_{5}}_{e_{1}e_{0}^{4}}$, $\Phi^{\tau_{p}}_{e_{1}}$, and $\Phi^{\tau_{q}}_{e_{1}}$ in (\ref{eq:theta Li4 for two primes}) and (\ref{eq:theta Li6 for two primes}), and regard the latter equations as two polynomial equations in $(\Phi_{e_{0}}^{\tau_{p}},\Phi_{e_{0}}^{\tau_{q}})$, as follows. 
First, in view of using \eqref{eq: elimination Phi_e1 for two primes}, we multiply equations (\ref{eq:theta Li4 for two primes}) and (\ref{eq:theta Li6 for two primes}) by $\Delta$, to obtain 
\begin{equation} \label{eq:theta Li4 bis}
\Delta\theta^{\#}(\Li^{\fu}_{4}) = \sum_{\tau} f_{\sigma_3 \tau} \Delta\Phi_{e_{1}e_{0}^{2}}^{\sigma_{3}} \Phi_{e_{0}}^{\tau} + \sum_{\tau_{(1)},\ldots,\tau_{(4)}} f_{\tau_{(1)}\tau_{(2)}\tau_{(3)}\tau_{(4)}} \Delta\Phi_{e_{1}}^{\tau_{(1)}} \Phi_{e_{0}}^{\tau_{(2)}} \Phi_{e_{0}}^{\tau_{(3)}}  \Phi_{e_{0}}^{\tau_{(4)}}, 
\end{equation}
\begin{multline} \label{eq:theta Li6 bis}
\Delta\theta^{\#}(\Li^{\fu}_{6}) = \sum_{\tau} f_{\sigma_5 \tau} \Delta\Phi_{e_{1}e_{0}^{4}}^{\sigma_{5}} \Phi_{e_{0}}^{\tau}
+ 
 \sum_{\tau_{(4)}, \tau_{(5)}, \tau_{(6)}} f_{\sigma_3 \tau_{(4)}\tau_{(5)}\tau_{(6)}} \Delta \Phi_{e_{1}e_{0}^{2}}^{\sigma_{3}} \Phi_{e_{0}}^{\tau_{(4)}}\Phi_{e_{0}}^{\tau_{(5)}}\Phi_{e_{0}}^{\tau_{(6)}} \\
 + \sum_{\tau_{(1)},\ldots,\tau_{(6)}} f_{\tau_{(1)}\ldots \tau_{(6)}} \Delta \Phi_{e_{1}}^{\tau_{(1)}} \Phi_{e_{0}}^{\tau_{(2)}} \ldots \Phi_{e_{0}}^{\tau_{(6)}}. 
\end{multline}
Then, in (\ref{eq:theta Li4 bis}) and (\ref{eq:theta Li6 bis}) we insert 
\begin{itemize}
    \item equations (\ref{eq:elimination Phi_sigma_3 for two primes}) and (\ref{eq:elimination Phi_sigma_5 for two primes'}) to eliminate $\Phi_{e_{1}e_{0}^{2}}^{\sigma_{3}}$ and $\Phi_{e_{1}e_{0}^{4}}^{\sigma_{5}}$;
    \item the formulas of Lemmas \ref{lem:1p} and \ref{lem:1q} to eliminate $\Delta \Phi_{e_{1}}^{\tau_{p}}$ and $\Delta\Phi_{e_{1}}^{\tau_{q}}$;
    \item the formula of Lemma \ref{eq:detM} to eliminate $\Delta$.
\end{itemize}
Then, (\ref{eq:theta Li4 for two primes}) and (\ref{eq:theta Li6 for two primes}) become polynomial equations in $(\Phi_{e_{0}}^{\tau_{p}},\Phi_{e_{0}}^{\tau_{q}})$. Finally, use equation (\ref{eq:theta log for two primes}) in the form
\begin{equation} \label{eq:theta log for two primes'}
\Phi_{e_0}^{\tau_q}=\frac{1}{f_{\tau_q}}(\theta^{\#}(\log^{\fu})-f_{\tau_p}\Phi_{e_0}^{\tau_p})
\end{equation}
to eliminate $\Phi_{e_{0}}^{\tau_{q}}$. After clearing denominators (i.e., multiplying by $f_{\tau_q}^4 f_{\sigma_3}$ in the case of $\Li_4^{\fu}$ and by $f_{\tau_q}^6 f_{\sigma_3} f_{\sigma_5}$ in the case of $\Li_6^{\fu}$), we obtain two polynomial equations in one variable of the following form (see the next paragraph for precise definitions and computations):
\begin{equation}  \label{eq:common root}
P_{4}(\Phi_{e_{0}}^{\tau_{p}}) = P_{6}(\Phi_{e_{0}}^{\tau_{p}})  = 0 
\end{equation}
where 
\[
    P_{i}(X) \in (\oh(U_S)[\theta^{\#}(\log^{\fu}), (\theta^{\#}(\Li_n^{\fu}))_{1\leq n \leq i}])[X], \qquad i\in \{ 4,6 \}.
\]
In particular, (\ref{eq:common root}) implies that $P_{4}(X)$ and $P_{6}(X)$ have a common root, namely $\Phi_{e_0}^{\tau_p}$, whence 
\begin{equation} \label{eq: resultant vanishesP} 
\mathrm{Res}(P_{4}(X),P_{6}(X))= 0.  
\end{equation}
Moreover, $P_{4}(X)$ and $P_{6}(X)$ have natural expressions of the form
$$ P_i(X)=\theta^{\#}(\nu_i(X)), \qquad i\in \{ 4,6 \}, $$
where $\theta^{\#}$ is applied to the coefficients of $\nu_i(X)$, and
\[
\nu_{i}(X) \in (\oh(U_S)[\log^{\fu},(\Li_n^{\fu})_{1\leq n \leq i}])[X], \qquad i\in \{ 4,6 \}.
\]
\begin{proposition}\label{eq: resultant vanishes} 
The element $\mathrm{Res}(\nu_{4}(X),\nu_{6}(X)) \in \oh(U_S)[\log^{\fu}, \Li_1^{\fu}, \ldots, \Li_6^{\fu}]$ belongs to $\ker(\theta^{\sharp})$.
\end{proposition}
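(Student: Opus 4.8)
The plan is to deduce the identity $\theta^{\#}\big(\mathrm{Res}(\nu_4(X),\nu_6(X))\big)=0$ from \eqref{eq:common root} by exploiting the fact that a resultant always lies in the ideal generated by the two polynomials it is built from. Concretely, I would first recall the following purely formal fact: for any commutative ring $A$ and any $g_1,g_2\in A[X]$, if $\mathrm{Res}(g_1,g_2)\in A$ denotes their resultant with respect to the variable $X$ (formed with the $X$-degrees of $g_1$ and $g_2$), then there exist $A_1(X),A_2(X)\in A[X]$, whose coefficients are $\Z$-polynomial expressions in the coefficients of $g_1$ and $g_2$, such that
\[
A_1(X)\,g_1(X)+A_2(X)\,g_2(X)=\mathrm{Res}(g_1,g_2)
\]
as an identity in $A[X]$ (the right-hand side regarded as a constant polynomial). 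This is the standard Bézout-type relation coming from the adjugate of the Sylvester matrix.

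I would then apply this with $A=\oh(U_S)[\log^{\fu},\Li_1^{\fu},\ldots,\Li_6^{\fu}]$, $g_1=\nu_4(X)$, $g_2=\nu_6(X)$, obtaining $A_1(X),A_2(X)\in A[X]$ with $A_1(X)\nu_4(X)+A_2(X)\nu_6(X)=\mathrm{Res}(\nu_4(X),\nu_6(X))$. Applying $\theta^{\#}$ coefficient-wise — i.e.\ via the induced ring homomorphism $A[X]\to\big(\oh(U_S)[\Phi]\big)[X]$ fixing $X$ and acting by $\theta^{\#}$ on coefficients — and using that $\theta^{\#}(\nu_i(X))=P_i(X)$ by construction, this turns into the identity
\[
\theta^{\#}(A_1)(X)\,P_4(X)+\theta^{\#}(A_2)(X)\,P_6(X)=\theta^{\#}\big(\mathrm{Res}(\nu_4(X),\nu_6(X))\big)
\]
in $\big(\oh(U_S)[\Phi]\big)[X]$, the right-hand side being independent of $X$. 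Since $P_4(X),P_6(X)\in\big(\oh(U_S)[\Phi]\big)[X]$ and $\Phi_{e_0}^{\tau_p}\in\oh(U_S)[\Phi]$, I would substitute $X=\Phi_{e_0}^{\tau_p}$; by \eqref{eq:common root} we have $P_4(\Phi_{e_0}^{\tau_p})=P_6(\Phi_{e_0}^{\tau_p})=0$, so the left-hand side vanishes, forcing $\theta^{\#}\big(\mathrm{Res}(\nu_4(X),\nu_6(X))\big)=0$, which is the assertion.

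I do not expect a serious obstacle; the one point that needs care is that pushing $\theta^{\#}$ naively \emph{inside} the resultant would be sensitive to a drop in $X$-degree (if $\theta^{\#}$ annihilates a leading coefficient of $\nu_i$, then $P_i$ has smaller degree than $\nu_i$ and $\theta^{\#}(\mathrm{Res}(\nu_4,\nu_6))$ need not coincide with $\mathrm{Res}(P_4,P_6)$ computed with the reduced degrees). The ideal-membership formulation above avoids this entirely: the Bézout relation is a formal polynomial identity in the coefficients, hence preserved by any coefficient-wise ring homomorphism irrespective of degree behaviour, and the final step uses only the substitution of the common root $\Phi_{e_0}^{\tau_p}$ via \eqref{eq:common root}. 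I would also remark, for the containment part of the statement, that $\mathrm{Res}(\nu_4(X),\nu_6(X))$ lies in $\oh(U_S)[\log^{\fu},\Li_1^{\fu},\ldots,\Li_6^{\fu}]$ rather than merely in its fraction field, simply because it is a $\Z$-polynomial in the coefficients of $\nu_4(X)$ and $\nu_6(X)$, which — after the clearing of denominators carried out in \S\ref{s:elim0} — already lie in that ring.
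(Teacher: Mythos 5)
Your proof is correct and follows essentially the same route as the paper: the paper simply writes $\theta^{\#}(\mathrm{Res}(\nu_{4}(X),\nu_{6}(X)))=\mathrm{Res}(\theta^{\#}(\nu_{4}(X)),\theta^{\#}(\nu_{6}(X)))=\mathrm{Res}(P_4(X),P_6(X))=0$, invoking \eqref{eq: resultant vanishesP}, which in turn rests on the same common root $\Phi_{e_0}^{\tau_p}$ from \eqref{eq:common root} that you use. Your detour through the B\'ezout relation $A_1\nu_4+A_2\nu_6=\mathrm{Res}(\nu_4,\nu_6)$ is a slightly more careful implementation of the same idea: it makes the commutation with $\theta^{\#}$ automatic and, as you note, immune to a possible drop in $X$-degree under $\theta^{\#}$ — a caveat the paper's one-line proof leaves implicit (it is harmless there provided both resultants are read off the Sylvester matrix with the formal degrees $2$ and $4$).
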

\begin{proof}
We have
    \begin{equation} 
    \theta^{\sharp}(\mathrm{Res}(\nu_{4}(X),\nu_{6}(X)))=
 \mathrm{Res}(\theta^{\sharp}(\nu_{4}(X)),\theta^{\sharp}(\nu_{6}(X))) = \Res(P_4(X), P_6(X)) = 0, 
\end{equation}
where we used \eqref{eq: resultant vanishesP} in the last equality.
\end{proof}

\subsection{Computation of the Chabauty--Kim function $\mathrm{Res}(\nu_{4}(X),\nu_{6}(X))$}

We now compute the polynomials $\nu_{4}(X)$ and $\nu_{6}(X)$ evoked in \Cref{s:elim0}, in order to obtain a more explicit expression for the Chabauty--Kim function $\mathrm{Res}(\nu_{4}(X),\nu_{6}(X))$. Such an expression will be instrumental in proving its non-triviality, and in computing its degree.

\subsubsection{An additional formula for the elimination of $\Phi_{e_{0}}^{\tau_{q}}$} 

We are going to encounter several expressions of the following form, where $F$ is a function with values in $\oh(U_S)$: 
$$ \sum_{\tau_{(j_1)},\ldots,\tau_{(j_r)} \in \{\tau_{p},\tau_{q}\}} F(\tau_{(j_1)},\ldots,\tau_{(j_r)}) \Phi_{e_{0}}^{\tau_{(j_1)}} \cdots \Phi_{e_{0}}^{\tau_{(j_r)}}. $$
Using equation (\ref{eq:theta log for two primes'}), any expression of this type can be rewritten as 
\begin{equation} \label{eq:function} 
\sum_{i=0}^{r} \sum_{\substack{I \subset \{j_1,\ldots,j_r\} \\ |I| = i}}  F(\tau_{(j_1)}^I,\ldots,\tau_{(j_r)}^I) (\Phi_{e_{0}}^{\tau_{p}})^{i} \bigg(  \frac{1}{ f_{\tau_{q}}} \big( \theta^{\#}(\log^{\fu}) - f_{\tau_{p}} \Phi_{e_{0}}^{\tau_{p}} \big) \bigg)^{r-i},
\end{equation}
where $\tau_{(j_k)}^I = \left\{ \begin{array}{ll} \tau_{p} \text{  if }j_k \in I
\\ \tau_{q}\text{ if }j_k \not\in I.
\end{array}\right.$

\subsubsection{The polynomial $\nu_{4}(X)$} 

We start with equation (\ref{eq:theta Li4 bis}), which is the formula for $\theta^{\#}(\Li^{\fu}_{4})$ multiplied by $\Delta$. We insert equation (\ref{eq:elimination Phi_sigma_3 for two primes}) to eliminate $\Phi_{e_{1}e_{0}^{2}}^{\sigma_{3}}$. Moreover, whenever $\Delta\Phi_{e_{1}}^{\tau}$ appears, we separate the terms in $\tau = \tau_{p}$ and $\tau = \tau_{q}$. We obtain
\begin{multline} \label{eq:theta Li4 bis bis}  \Delta \theta^{\#}(\Li^{\fu}_{4}) = \sum_{\tau}  \frac{f_{\sigma_3 \tau}}{f_{\sigma_{3}}}  \bigg( \Delta 
\theta^{\#}(\Li^{\fu}_{3}) -  
\sum_{\tau_{(3)}, \tau_{(4)}}
\bigg[ f_{\tau_{p}\tau_{(3)}\tau_{(4)}} \Delta \Phi_{e_{1}}^{\tau_{p}}
+  f_{\tau_{q}\tau_{(3)}\tau_{(4)}} \Delta \Phi_{e_{1}}^{\tau_{q}} \bigg]
\Phi_{e_{0}}^{\tau_{(3)}} \Phi_{e_{0}}^{\tau_{(4)}} \bigg) \Phi_{e_{0}}^{\tau}
\\ + \sum_{\tau_{(2)},\tau_{(3)},\tau_{(4)}} \bigg[ f_{\tau_{p}\tau_{(2)}\tau_{(3)}\tau_{(4)}} \Delta \Phi_{e_{1}}^{\tau_{p}}
+  f_{\tau_{q}\tau_{(2)}\tau_{(3)}\tau_{(4)}} \Delta \Phi_{e_{1}}^{\tau_{q}} \bigg]\Phi_{e_{0}}^{\tau_{(2)}} \Phi_{e_{0}}^{\tau_{(3)}}  \Phi_{e_{0}}^{\tau_{(4)}}. 
\end{multline}
By collecting the $\Delta \Phi_{e_{1}}^{\tau_{p}}$ and $\Delta \Phi_{e_{1}}^{\tau_{q}}$ terms, this can be rewritten as
\begin{multline} \label{eq:theta Li4 bis bis bis} 
\Delta \theta^{\#}(\Li^{\fu}_{4}) = \sum_{\tau} \frac{f_{\sigma_3 \tau}}{f_{\sigma_{3}}} \Delta 
\theta^{\#}(\Li^{\fu}_{3}) \Phi_{e_{0}}^{\tau}
\\ + \sum_{\tau_{(2)},\tau_{(3)},\tau_{(4)}} \frac{1}{f_{\sigma_3}} \bigg[ \bigg(f_{\tau_{p}\tau_{(2)}\tau_{(3)}\tau_{(4)}} f_{\sigma_3} - f_{\tau_p\tau_{(3)}\tau_{(4)}}f_{\sigma_3 \tau_{(2)}}\bigg) \Delta \Phi_{e_{1}}^{\tau_{p}} \bigg. \\
\bigg. +  \bigg( f_{\tau_{q}\tau_{(2)}\tau_{(3)}\tau_{(4)}}f_{\sigma_3} - f_{\tau_q\tau_{(3)}\tau_{(4)}}f_{\sigma_3 \tau_{(2)}}  \bigg) \Delta \Phi_{e_{1}}^{\tau_{q}} \bigg]\Phi_{e_{0}}^{\tau_{(2)}} \Phi_{e_{0}}^{\tau_{(3)}}  \Phi_{e_{0}}^{\tau_{(4)}}.
\end{multline}
We use Lemmas \ref{lem:1p} and \ref{lem:1q} in (\ref{eq:theta Li4 bis bis bis}) to eliminate $\Delta \Phi_{e_{1}}^{\tau_{p}}$ and $\Delta \Phi_{e_{1}}^{\tau_{q}}$, and use Lemma \ref{eq:detM} to eliminate $\Delta$. We obtain 
\begin{multline} \label{eq:one before last P4 bis}
\frac{1}{2} f_{[\tau_q, \tau_p]}\theta^{\#}(\log^{\fu}) \theta^{\#}(\Li^{\fu}_{4}) = \sum_{\tau} \frac{f_{\sigma_3 \tau}f_{[\tau_q, \tau_p]}}{2f_{\sigma_{3}}}\theta^{\#}(\log^{\fu}) 
\theta^{\#}(\Li^{\fu}_{3}) \Phi_{e_{0}}^{\tau}
\\ + \sum_{\tau_{(2)},\tau_{(3)},\tau_{(4)}} \frac{1}{f_{\sigma_3}} \bigg[ \bigg(f_{\tau_{p}\tau_{(2)}\tau_{(3)}\tau_{(4)}} f_{\sigma_3} - f_{\tau_p\tau_{(3)}\tau_{(4)}}f_{\sigma_3 \tau_{(2)}}\bigg) \bigg( \frac{1}{2}f_{[\tau_q,\tau_p]}\theta^{\#}(\Li_1^{\fu})\Phi_{e_0}^{\tau_p}-f_{\tau_q} \theta^{\#}(F^{\vert 1\vert}_{2,2}) \bigg) \bigg. \\
\bigg. +  \frac{1}{f_{\tau_q}} \bigg( f_{\tau_{q}\tau_{(2)}\tau_{(3)}\tau_{(4)}}f_{\sigma_3} - f_{\tau_q\tau_{(3)}\tau_{(4)}}f_{\sigma_3 \tau_{(2)}}  \bigg) \bigg( (f_{\tau_p^2\tau_q}-f_{\tau_q\tau_p^2})\theta^{\#}(\Li_1^{\fu})\Phi_{e_0}^{\tau_p}
    +\big( f_{\tau_p}f_{\tau_q}\theta^{\#}(\Li_2^{\fu})-f_{\tau_p\tau_q}\theta^{\#}(\log^{\fu}\Li_1^{\fu})\big) \bigg) \bigg] \\
    \cdot \Phi_{e_{0}}^{\tau_{(2)}} \Phi_{e_{0}}^{\tau_{(3)}}  \Phi_{e_{0}}^{\tau_{(4)}}.
\end{multline}

Substituting equations (\ref{eq:theta log for two primes'}) and (\ref{eq:function}) in equation (\ref{eq:one before last P4 bis}) to eliminate $\Phi_{e_{0}}^{\tau_{q}}$, and clearing denominators by multiplying by $f_{\tau_q}^4 f_{\sigma_3}$, we obtain an equation of the form $P_{4}(\Phi_{e_{0}}^{\tau_{p}})=0$ where $P_{4}(X)= \theta^{\sharp}(\nu_{4}(X))$ with
\begin{multline} \label{eq:nu4}
\nu_4(X)= \\ -\frac{1}{2} f_{\tau_q}^4 f_{\sigma_3} f_{[\tau_q, \tau_p]}\log^{\fu} \Li^{\fu}_{4} + \frac{1}{2}f_{\tau_q}^3 f_{\sigma_3 \tau_q}f_{[\tau_q, \tau_p]}(\log^{\fu})^2 
\Li^{\fu}_{3} +\frac{1}{2}f_{\tau_q}^3 f_{[\tau_q, \tau_p]} \big( f_{\sigma_3 \tau_p}f_{\tau_q} - f_{\sigma_3 \tau_q}f_{\tau_p}  \big)\log^{\fu} 
\Li^{\fu}_{3} X
\\ + \sum_{i=0}^3 \sum_{\substack{I\subset \{ 2,3,4\} \\ \vert I\vert=i}} f_{\tau_q}^{i+1} \bigg[ \bigg(f_{\tau_{p}\tau_{(2)}^I\tau_{(3)}^I\tau_{(4)}^I} f_{\sigma_3} - f_{\tau_p\tau_{(3)}^I\tau_{(4)}^I}f_{\sigma_3 \tau_{(2)}^I}\bigg) \bigg( \frac{1}{2}f_{[\tau_q,\tau_p]}\Li_1^{\fu}X-f_{\tau_q} F^{\vert 1\vert}_{2,2} \bigg) \bigg. \\
\bigg. +  \frac{1}{f_{\tau_q}}\bigg( f_{\tau_{q}\tau_{(2)}^I\tau_{(3)}^I\tau_{(4)}^I}f_{\sigma_3} - f_{\tau_q\tau_{(3)}^I\tau_{(4)}^I}f_{\sigma_3 \tau_{(2)}^I}  \bigg) \bigg( (f_{\tau_p^2\tau_q}-f_{\tau_q\tau_p^2})\Li_1^{\fu}X
    +\big( f_{\tau_p}f_{\tau_q}\Li_2^{\fu}-f_{\tau_p\tau_q}\log^{\fu}\Li_1^{\fu}\big) \bigg) \bigg] \\ 
    \cdot X^i \big( \log^{\fu}-f_{\tau_p}X \big)^{3-i}.
\end{multline}

\begin{lemma}\label{lem:4cst2}
    The polynomial $\nu_4(X)$ has degree $2$ and is a multiple of $\log^{\fu}$. 
\end{lemma}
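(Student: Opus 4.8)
The plan is to read off the degree and the divisibility by $\log^{\fu}$ directly from the explicit formula \eqref{eq:nu4}, treating the two claims separately.

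\textbf{Degree in $X$.} First I would inspect the three ``top'' terms of \eqref{eq:nu4}: the terms $-\tfrac12 f_{\tau_q}^4 f_{\sigma_3} f_{[\tau_q,\tau_p]}\log^{\fu}\Li^{\fu}_4$ and $\tfrac12 f_{\tau_q}^3 f_{\sigma_3\tau_q} f_{[\tau_q,\tau_p]}(\log^{\fu})^2\Li^{\fu}_3$ are constant in $X$, and the term $\tfrac12 f_{\tau_q}^3 f_{[\tau_q,\tau_p]}(f_{\sigma_3\tau_p}f_{\tau_q}-f_{\sigma_3\tau_q}f_{\tau_p})\log^{\fu}\Li^{\fu}_3\,X$ is linear in $X$. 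The remaining contribution is the double sum over $i\in\{0,1,2,3\}$ and $I\subseteq\{2,3,4\}$ with $|I|=i$. In the summand indexed by $(i,I)$, the bracketed factor contains $X$ to degree at most $1$ (it appears only through $\Li^{\fu}_1 X$), and this is multiplied by $X^i(\log^{\fu}-f_{\tau_p}X)^{3-i}$, which has $X$-degree exactly $i+(3-i)=3$. Hence a priori each summand has $X$-degree at most $4$, and I need to show the degree-$4$ and degree-$3$ parts cancel. I would extract the coefficient of $X^4$ across the whole double sum: it comes only from the $\Li^{\fu}_1 X$ piece of the bracket times the $(-f_{\tau_p}X)^{3-i}$ piece of $X^i(\log^{\fu}-f_{\tau_p}X)^{3-i}$, so it equals $\Li^{\fu}_1 (-f_{\tau_p})^{3-i}$ times a shuffle-algebra coefficient, summed over $(i,I)$. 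The expectation, matching the sentence right before the lemma (``clearing denominators by multiplying by $f_{\tau_q}^4 f_{\sigma_3}$''), is that all contributions of $X$-degree $3$ and $4$ vanish identically after using the shuffle relations among the $f_w$ — indeed $P_4(X)$ was constructed from a \emph{linear} system in $\Phi^{\tau_p}_{e_1}, \Phi^{\tau_q}_{e_1}$ and a substitution of a single cubic relation, so the genuine $X=\Phi^{\tau_p}_{e_0}$-degree should drop to $2$. I would verify this cancellation by collecting the $X^4$ and $X^3$ coefficients, substituting the shuffle identities $f_{\tau_a}f_{\tau_b\tau_c}=\sum_{\text{shuffles}}$ and $f_{\tau_a}f_{\tau_b}=f_{\tau_a\tau_b}+f_{\tau_b\tau_a}$ (as in the proofs of Lemmas \ref{lem:1p}, \ref{lem:1q}), and checking they sum to zero. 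This cancellation is the main obstacle: it is a nontrivial shuffle-algebra identity and is exactly where a computer algebra check is most useful. Finally I would exhibit a nonzero $X^2$ coefficient (e.g.\ its $\Li^{\fu}_2$-component, which picks up $f_{\tau_p}^2\sum_I(\cdots)$ from the $i=0$, $3-i=3$ term together with contributions from $i=1,2$) to confirm the degree is exactly $2$, not smaller.

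\textbf{Divisibility by $\log^{\fu}$.} The cleanest route is to evaluate $\nu_4(X)$ modulo the ideal $(\log^{\fu})$ in $\oh(U_S)[\log^{\fu},\Li^{\fu}_1,\ldots,\Li^{\fu}_6][X]$ and show the result is $0$; equivalently, set $\log^{\fu}=0$ and check the polynomial vanishes. Setting $\log^{\fu}=0$ kills the first two constant terms and the linear term displayed explicitly, and replaces $X^i(\log^{\fu}-f_{\tau_p}X)^{3-i}$ by $(-f_{\tau_p})^{3-i}X^3$; it also kills the $f_{\tau_p\tau_q}\log^{\fu}\Li^{\fu}_1$ subterm inside the last bracket. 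So modulo $(\log^{\fu})$, $\nu_4(X)$ becomes $X^3$ times a sum over $(i,I)$ of $f_{\tau_q}^{i+1}(-f_{\tau_p})^{3-i}$ times a bracket built from $\tfrac12 f_{[\tau_q,\tau_p]}\Li^{\fu}_1 X - f_{\tau_q}F^{\vert1\vert}_{2,2}$ and $\tfrac1{f_{\tau_q}}(\cdots)\bigl((f_{\tau_p^2\tau_q}-f_{\tau_q\tau_p^2})\Li^{\fu}_1 X + f_{\tau_p}f_{\tau_q}\Li^{\fu}_2\bigr)$. I would then argue this vanishes. The conceptual reason is that $\Phi^{\tau_p}_{e_0}$ and $\Phi^{\tau_q}_{e_0}$ are not independent once $\log^{\fu}=0$: equation \eqref{eq:theta log for two primes'} forces $\Phi^{\tau_q}_{e_0}=-\tfrac{f_{\tau_p}}{f_{\tau_q}}\Phi^{\tau_p}_{e_0}$, and $\Delta=\tfrac12 f_{[\tau_q,\tau_p]}\theta^{\#}(\log^{\fu})$ (Lemma \ref{eq:detM}) vanishes, so the entire derivation of $P_4$ from multiplying \eqref{eq:theta Li4 bis} by $\Delta$ degenerates to $0=0$; the polynomial $P_4(X)$ literally has $\theta^{\#}(\log^{\fu})$ as a factor, hence $\nu_4(X)$ has $\log^{\fu}$ as a factor. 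To turn this into a proof, rather than re-deriving, I would simply collect the $\log^{\fu}=0$ specialisation term-by-term and show the shuffle-algebra coefficients cancel — again a finite shuffle identity, checkable by hand or machine. Concretely, factoring $\nu_4(X)=\log^{\fu}\cdot\mu_4(X)$ with $\mu_4(X)$ of degree $2$ then follows, completing the lemma.

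I expect the degree-$2$ claim to be the more delicate of the two: it requires the simultaneous vanishing of the $X^4$- and $X^3$-coefficients, which encodes the fact that the elimination of the linear variables $\Phi^{\tau}_{e_1}$ was exact. The $\log^{\fu}$-divisibility, by contrast, is almost structural — it is built into the construction via the factor $\Delta = \tfrac12 f_{[\tau_q,\tau_p]}\theta^{\#}(\log^{\fu})$ and the reader can be pointed to Lemma \ref{eq:detM} — and should require only a short verification on \eqref{eq:nu4}. In writing the proof I would state both coefficient identities explicitly, reduce each to a consequence of the shuffle product formula, and remark that they were confirmed symbolically in the accompanying SageMath code.
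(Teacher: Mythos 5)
Your plan is correct and, on the degree claim, is essentially the paper's proof: after observing that the a priori $X$-degree of \eqref{eq:nu4} is $4$, the vanishing of the $X^3$- and $X^4$-coefficients is a finite shuffle-algebra identity that the paper verifies symbolically in SageMath, exactly as you propose. On the divisibility claim you do more work than necessary. You correctly compute that setting $\log^{\fu}=0$ in \eqref{eq:nu4} kills the three explicit terms and collapses each summand of the double sum to $X^3$ times something of $X$-degree at most $1$, so that $\nu_4(X)\bmod(\log^{\fu})$ is supported entirely in $X$-degrees $3$ and $4$ --- but you then propose a second, independent shuffle-identity verification that this reduction vanishes. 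That verification is redundant: once the degree claim is established, $\deg_X\nu_4\le 2$ forces the mod-$\log^{\fu}$ reduction (which lives only in degrees $3$ and $4$) to be zero. This is precisely what the word ``then'' in the paper's one-line proof is doing, and it is the observation you were one step away from. Separately, your heuristic that $P_4(X)$ ``literally has $\theta^{\#}(\log^{\fu})$ as a factor'' because the derivation began by multiplying by $\Delta=\tfrac12 f_{[\tau_q,\tau_p]}\theta^{\#}(\log^{\fu})$ is not a proof as stated (the substituted right-hand sides are not individually multiples of $\Delta$), but since you explicitly decline to rely on it, this is not a gap.
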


\begin{proof}
    The statement about the degree was verified using SageMath \cite{sagemath} (see our  \href{https://github.com/Ariel-Z-Weiss/polylog-chabauty-kim}{GitHub repository} for the code). It is then clear from \eqref{eq:nu4} that $\nu_4(X)$ is a multiple of $\log^{\fu}$. 
\end{proof}

\subsubsection{The polynomial $\nu_6(X)$}

We start with equation (\ref{eq:theta Li6 bis}) which is the formula for $\theta^{\sharp}(\Li^{\fu}_{6})$ multiplied by $\Delta$. We insert (\ref{eq:elimination Phi_sigma_3 for two primes}) and (\ref{eq:elimination Phi_sigma_5 for two primes' 2}) to eliminate $\Phi_{e_{1}e_{0}^{2}}^{\sigma_{3}}$ and $\Phi_{e_{1}e_{0}^{2}}^{\sigma_{5}}$. We obtain
\begin{multline}
\Delta\theta^{\#}(\Li^{\fu}_{6}) = \sum_{\tau}  \frac{f_{\sigma_5 \tau}}{f_{\sigma_5}} \Delta
\theta^{\#}(\Li^{\fu}_{5}) \Phi_{e_0}^{\tau} \\
+ \sum_{\tau_{(4)}, \tau_{(5)}, \tau_{(6)}}\frac{1}{f_{\sigma_{3}} f_{\sigma_5}} \bigg( f_{\sigma_3 \tau_{(4)}\tau_{(5)}\tau_{(6)}} f_{\sigma_5} - f_{\sigma_{3} \tau_{(4)} \tau_{(5)}} f_{\sigma_5 \tau_{(6)}} \bigg)
\Delta\theta^{\#}(\Li^{\fu}_{3}) \Phi_{e_{0}}^{\tau_{(4)}}\Phi_{e_{0}}^{\tau_{(5)}}\Phi_{e_{0}}^{\tau_{(6)}} \\
 +  
\sum_{\tau_{(1)} ,\ldots, \tau_{(6)}} \frac{1}{f_{\sigma_{3}}f_{\sigma_5}} \bigg( f_{\tau_{(1)} \tau_{(2)} \tau_{(3)}} f_{\sigma_{3} \tau_{(4)} \tau_{(5)}} f_{\sigma_5 \tau_{(6)}} - f_{\tau_{(1)}\tau_{(2)}\tau_{(3)}\tau_{(4)} \tau_{(5)}} f_{\sigma_{3}} f_{\sigma_5 \tau_{(6)}} \bigg. \\
\bigg. - f_{\tau_{(1)} \tau_{(2)} \tau_{(3)}} f_{\sigma_3 \tau_{(4)}\tau_{(5)}\tau_{(6)}}f_{\sigma_5} + f_{\tau_{(1)} \tau_{(2)}\ldots \tau_{(6)}} f_{\sigma_3} f_{\sigma_5} \bigg)\Delta\Phi_{e_{1}}^{\tau_{(1)}}\Phi_{e_{0}}^{\tau_{(2)}} \Phi_{e_{0}}^{\tau_{(3)}}
 \Phi_{e_{0}}^{\tau_{(4)}} 
 \Phi_{e_{0}}^{\tau_{(5)}}\Phi_{e_0}^{\tau_{(6)}}.
\end{multline}
Splitting the sums indexed by $\tau$ and $\tau_{(1)}$ yields
\begin{multline}\label{eq:theta Li6 bis bis}
\Delta\theta^{\#}(\Li^{\fu}_{6}) = \frac{f_{\sigma_5 \tau_p}}{f_{\sigma_5}} \Delta
\theta^{\#}(\Li^{\fu}_{5}) \Phi_{e_0}^{\tau_p} + \frac{f_{\sigma_5 \tau_q}}{f_{\sigma_5}} \Delta
\theta^{\#}(\Li^{\fu}_{5}) \Phi_{e_0}^{\tau_q} \\
+ \sum_{\tau_{(4)}, \tau_{(5)}, \tau_{(6)}}\frac{1}{f_{\sigma_{3}} f_{\sigma_5}} \bigg( f_{\sigma_3 \tau_{(4)}\tau_{(5)}\tau_{(6)}} f_{\sigma_5} - f_{\sigma_{3} \tau_{(4)} \tau_{(5)}} f_{\sigma_5 \tau_{(6)}} \bigg)
\Delta\theta^{\#}(\Li^{\fu}_{3}) \Phi_{e_{0}}^{\tau_{(4)}}\Phi_{e_{0}}^{\tau_{(5)}}\Phi_{e_{0}}^{\tau_{(6)}} \\
 +  
\sum_{\tau_{(2)} ,\ldots, \tau_{(6)}} \frac{1}{f_{\sigma_{3}}f_{\sigma_5}} \bigg[ \bigg( f_{\tau_{p} \tau_{(2)} \tau_{(3)}} f_{\sigma_{3} \tau_{(4)} \tau_{(5)}} f_{\sigma_5 \tau_{(6)}} - f_{\tau_{p}\tau_{(2)}\tau_{(3)}\tau_{(4)} \tau_{(5)}} f_{\sigma_{3}} f_{\sigma_5 \tau_{(6)}} \bigg. \bigg. \\
 - f_{\tau_{p} \tau_{(2)} \tau_{(3)}} f_{\sigma_3 \tau_{(4)}\tau_{(5)}\tau_{(6)}}f_{\sigma_5} + f_{\tau_{p} \tau_{(2)}\ldots \tau_{(6)}} f_{\sigma_3} f_{\sigma_5} \bigg)\Delta\Phi_{e_{1}}^{\tau_{p}} \\
 \bigg. \bigg. + 
\bigg( f_{\tau_{q} \tau_{(2)} \tau_{(3)}} f_{\sigma_{3} \tau_{(4)} \tau_{(5)}} f_{\sigma_5 \tau_{(6)}} - f_{\tau_{q}\tau_{(2)}\tau_{(3)}\tau_{(4)} \tau_{(5)}} f_{\sigma_{3}} f_{\sigma_5 \tau_{(6)}} \bigg. \bigg. \\
 - f_{\tau_{q} \tau_{(2)} \tau_{(3)}} f_{\sigma_3 \tau_{(4)}\tau_{(5)}\tau_{(6)}}f_{\sigma_5} + f_{\tau_{q} \tau_{(2)}\ldots \tau_{(6)}} f_{\sigma_3} f_{\sigma_5} \bigg)\Delta\Phi_{e_{1}}^{\tau_{q}} \bigg]
 \Phi_{e_{0}}^{\tau_{(2)}} \Phi_{e_{0}}^{\tau_{(3)}}
 \Phi_{e_{0}}^{\tau_{(4)}} 
 \Phi_{e_{0}}^{\tau_{(5)}}\Phi_{e_0}^{\tau_{(6)}}.
\end{multline}
Use Lemmas \ref{lem:1p} and \ref{lem:1q} in (\ref{eq:theta Li4 bis bis bis}) to eliminate $\Delta \Phi_{e_{1}}^{\tau_{p}}$ and $\Delta \Phi_{e_{1}}^{\tau_{q}}$, and Lemma \ref{eq:detM} to eliminate $\Delta$:
\begin{multline}\label{eq:one before last P6}
\bigg( \frac{1}{2} f_{[\tau_q, \tau_p]}\theta^{\#}(\log^{\fu}) \bigg)\theta^{\#}(\Li^{\fu}_{6}) \\ =  \frac{f_{\sigma_5 \tau_p}}{f_{\sigma_5}} \bigg( \frac{1}{2} f_{[\tau_q, \tau_p]}\theta^{\#}(\log^{\fu}) \bigg)
\theta^{\#}(\Li^{\fu}_{5}) \Phi_{e_0}^{\tau_p} + \frac{f_{\sigma_5 \tau_q}}{f_{\sigma_5}} \bigg( \frac{1}{2} f_{[\tau_q, \tau_p]}\theta^{\#}(\log^{\fu}) \bigg)
\theta^{\#}(\Li^{\fu}_{5}) \Phi_{e_0}^{\tau_q} \\
+ \sum_{\tau_{(4)}, \tau_{(5)}, \tau_{(6)}}\frac{1}{f_{\sigma_{3}} f_{\sigma_5}} \bigg( f_{\sigma_3 \tau_{(4)}\tau_{(5)}\tau_{(6)}} f_{\sigma_5} - f_{\sigma_{3} \tau_{(4)} \tau_{(5)}} f_{\sigma_5 \tau_{(6)}} \bigg)
\bigg( \frac{1}{2} f_{[\tau_q, \tau_p]}\theta^{\#}(\log^{\fu}) \bigg)\theta^{\#}(\Li^{\fu}_{3}) \Phi_{e_{0}}^{\tau_{(4)}}\Phi_{e_{0}}^{\tau_{(5)}}\Phi_{e_{0}}^{\tau_{(6)}} \\
 +  
\sum_{\tau_{(2)} ,\ldots, \tau_{(6)}} \frac{1}{f_{\sigma_{3}}f_{\sigma_5}} \bigg[ \bigg( f_{\tau_{p} \tau_{(2)} \tau_{(3)}} f_{\sigma_{3} \tau_{(4)} \tau_{(5)}} f_{\sigma_5 \tau_{(6)}} - f_{\tau_{p}\tau_{(2)}\tau_{(3)}\tau_{(4)} \tau_{(5)}} f_{\sigma_{3}} f_{\sigma_5 \tau_{(6)}} \bigg. \bigg. \\
 - f_{\tau_{p} \tau_{(2)} \tau_{(3)}} f_{\sigma_3 \tau_{(4)}\tau_{(5)}\tau_{(6)}}f_{\sigma_5} + f_{\tau_{p} \tau_{(2)}\ldots \tau_{(6)}} f_{\sigma_3} f_{\sigma_5} \bigg)\bigg( \frac{1}{2}f_{[\tau_q,\tau_p]}\theta^{\#}(\Li_1^{\fu})\Phi_{e_0}^{\tau_p}-f_{\tau_q} \theta^{\#}(F^{\vert 1\vert}_{2,2}) \bigg) \\
 \bigg. \bigg. + \frac{1}{f_{\tau_q}}
\bigg( f_{\tau_{q} \tau_{(2)} \tau_{(3)}} f_{\sigma_{3} \tau_{(4)} \tau_{(5)}} f_{\sigma_5 \tau_{(6)}} - f_{\tau_{q}\tau_{(2)}\tau_{(3)}\tau_{(4)} \tau_{(5)}} f_{\sigma_{3}} f_{\sigma_5 \tau_{(6)}} 
 - f_{\tau_{q} \tau_{(2)} \tau_{(3)}} f_{\sigma_3 \tau_{(4)}\tau_{(5)}\tau_{(6)}}f_{\sigma_5} \bigg. \bigg. \\
 + f_{\tau_{q} \tau_{(2)}\ldots \tau_{(6)}} f_{\sigma_3} f_{\sigma_5} \bigg)\bigg( (f_{\tau_p^2\tau_q}-f_{\tau_q\tau_p^2})\theta^{\#}(\Li_1^{\fu})\Phi_{e_0}^{\tau_p}
    +\big( f_{\tau_p}f_{\tau_q}\theta^{\#}(\Li_2^{\fu})-f_{\tau_p\tau_q}\theta^{\#}(\log^{\fu}\Li_1^{\fu})\big) \bigg) \bigg] \\
 \cdot \Phi_{e_{0}}^{\tau_{(2)}} \Phi_{e_{0}}^{\tau_{(3)}}
 \Phi_{e_{0}}^{\tau_{(4)}} 
 \Phi_{e_{0}}^{\tau_{(5)}}\Phi_{e_0}^{\tau_{(6)}}.
\end{multline}
Inserting equations (\ref{eq:theta log for two primes'}) and (\ref{eq:function}) in equation (\ref{eq:one before last P6}) to eliminate $\Phi_{e_{0}}^{\tau_{q}}$, and clearing denominators by multiplying by $f_{\tau_q}^6 f_{\sigma_3} f_{\sigma_5}$, we obtain an equation of the form $P_{6}(\Phi_{e_{0}}^{\tau_{p}})=0$ where $P_{6}(X) = \theta^{\sharp}(\nu_{6}(X))$ with
\begin{multline}\label{eq:nu6}
\nu_6(X)= \\
-\frac{1}{2} f_{\tau_q}^6 f_{\sigma_3} f_{\sigma_5} f_{[\tau_q, \tau_p]}\log^{\fu} \Li^{\fu}_{6}
+ \frac{1}{2} f_{\tau_q}^5 f_{\sigma_3} f_{\sigma_5 \tau_q}  f_{[\tau_q, \tau_p]}(\log^{\fu})^2 
\Li^{\fu}_{5}
 \\ + \frac{1}{2} f_{\tau_q}^5 f_{\sigma_3} f_{[\tau_q, \tau_p]} \big(f_{\sigma_5 \tau_p} f_{\tau_q}  - f_{\sigma_5 \tau_q} f_{\tau_p} \big)\log^{\fu} 
\Li^{\fu}_{5} X \\
+ \sum_{j=0}^3\sum_{\substack{J\subset \{ 4,5,6 \} \\ \vert J\vert=j }} \frac{1}{2} f_{\tau_q}^{3+j}f_{[\tau_q, \tau_p]} \bigg( f_{\sigma_3 \tau_{(4)}^J\tau_{(5)}^J\tau_{(6)}^J} f_{\sigma_5} - f_{\sigma_{3} \tau_{(4)}^J \tau_{(5)}^J} f_{\sigma_5 \tau_{(6)}^J} \bigg)
\log^{\fu}\Li^{\fu}_{3}X^j \big( \log^{\fu}-f_{\tau_p}X \big)^{3-j} \\
 +  
\sum_{i=0}^5 \sum_{\substack{I\subset \{ 2, \ldots, 6 \} \\ \vert I \vert=i }} f_{\tau_q}^{i+1} \bigg[ \bigg( f_{\tau_{p} \tau_{(2)}^I \tau_{(3)}^I} f_{\sigma_{3} \tau_{(4)}^I \tau_{(5)}}^I f_{\sigma_5 \tau_{(6)}^I} - f_{\tau_{p}\tau_{(2)}^I\tau_{(3)}^I\tau_{(4)}^I \tau_{(5)}^I} f_{\sigma_{3}} f_{\sigma_5 \tau_{(6)}^I} \bigg. \bigg. \\
 - f_{\tau_{p} \tau_{(2)}^I \tau_{(3)}^I} f_{\sigma_3 \tau_{(4)}^I\tau_{(5)}^I\tau_{(6)}^I}f_{\sigma_5} + f_{\tau_{p} \tau_{(2)}^I\ldots \tau_{(6)}^I} f_{\sigma_3} f_{\sigma_5} \bigg)\bigg(  \frac{1}{2} f_{[\tau_q, \tau_p]}\Li^{\fu}_1 X- f_{\tau_q}F^{\vert 1\vert}_{2,2} \bigg) \\
 \bigg. \bigg. + \frac{1}{f_{\tau_q}} 
\bigg( f_{\tau_{q} \tau_{(2)}^I \tau_{(3)}^I} f_{\sigma_{3} \tau_{(4)}^I \tau_{(5)}^I} f_{\sigma_5 \tau_{(6)}^I} - f_{\tau_{q}\tau_{(2)}^I\tau_{(3)}^I\tau_{(4)}^I \tau_{(5)}^I} f_{\sigma_{3}} f_{\sigma_5 \tau_{(6)}^I} 
 - f_{\tau_{q} \tau_{(2)}^I \tau_{(3)}^I} f_{\sigma_3 \tau_{(4)}^I\tau_{(5)}^I\tau_{(6)}^I}f_{\sigma_5} \bigg. \bigg. \\
 + f_{\tau_{q} \tau_{(2)}^I\ldots \tau_{(6)}^I} f_{\sigma_3} f_{\sigma_5} \bigg)\bigg( (f_{\tau_p^2\tau_q}-f_{\tau_q\tau_p^2})\Li_1^{\fu} X + \big( f_{\tau_p}f_{\tau_q}\Li_2^{\fu}-f_{\tau_p\tau_q}\log^{\fu}\Li_1^{\fu}\big) \bigg) \bigg]
 X^i \big( \log^{\fu}-f_{\tau_p}X \big)^{5-i}.
\end{multline}

\begin{lemma}\label{lem:4cst}
    The polynomial $\nu_6(X)$ has degree $4$ and is a multiple of $\log^{\fu}$. 
\end{lemma}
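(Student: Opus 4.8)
The plan is to proceed exactly as in the proof of \Cref{lem:4cst2}. First I would pin down the degree of $\nu_6(X)$ in the variable $X$. Inspection of \eqref{eq:nu6} shows a priori that $\deg_X \nu_6(X) \le 6$: the only monomials of $X$-degree exceeding $4$ arise in the last double sum, where the factor $X^i(\log^{\fu}-f_{\tau_p}X)^{5-i}$ has $X$-degree at most $5$ and is multiplied by a bracket of $X$-degree $\le 1$. Claiming that $\deg_X\nu_6(X)=4$ is thus equivalent to the vanishing of the coefficients of $X^5$ and of $X^6$ in \eqref{eq:nu6}, together with non-vanishing of the coefficient of $X^4$. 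Each of these coefficients is an explicit $\oh(U_S)$-linear combination of products of at most four abstract shuffle-basis elements $f_w$, summed over the subsets $I\subseteq\{2,\ldots,6\}$ appearing in \eqref{eq:nu6}, and I would verify the vanishing by the same direct SageMath computation used for $\nu_4(X)$ (the code is in our GitHub repository).

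Granting that $\deg_X\nu_6(X)=4$, divisibility by $\log^{\fu}$ would then follow from the explicit shape of \eqref{eq:nu6} by the following bookkeeping. For each $0\le k\le 4$, the coefficient of $X^k$ in $\nu_6(X)$ is automatically a multiple of $\log^{\fu}$: the first three summands of \eqref{eq:nu6} carry an explicit factor $\log^{\fu}$ (or $(\log^{\fu})^2$); the fourth summand (the sum over $j$) carries the explicit factor $\log^{\fu}\Li_3^{\fu}$; and in the fifth summand (the sum over $i$), expanding $X^i(\log^{\fu}-f_{\tau_p}X)^{5-i}=\sum_m\binom{5-i}{m}(\log^{\fu})^{5-i-m}(-f_{\tau_p})^mX^{i+m}$ and multiplying by the bracket, which has $X$-degree $\le 1$, shows that the coefficient of $X^k$ is a combination of $(\log^{\fu})^{5-k}$ and $(\log^{\fu})^{6-k}$ with coefficients in $\oh(U_S)[\log^{\fu},\Li_1^{\fu},\Li_2^{\fu}]$, and both exponents are positive when $k\le 4$. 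Consequently, the only potentially $\log^{\fu}$-free contributions to $\nu_6(X)$ lie in the coefficients of $X^5$ and $X^6$, which vanish by the degree statement; hence $\log^{\fu}\mid\nu_6(X)$.

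The main obstacle is purely computational: the coefficients of $X^5$ and $X^6$ in \eqref{eq:nu6} are sums over all $I\subseteq\{2,\ldots,6\}$ of products of several $f_w$'s, and certifying their vanishing requires expanding the shuffle products and collecting terms --- feasible for a computer algebra system but not realistically by hand. I expect no conceptual difficulty beyond this bookkeeping.
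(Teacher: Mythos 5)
Your proposal is correct and follows essentially the same route as the paper: the degree statement is certified by the same SageMath computation, and the divisibility by $\log^{\fu}$ is read off from the explicit form of \eqref{eq:nu6} once the coefficients of $X^5$ and $X^6$ are known to vanish. The paper simply declares the second step ``clear''; your expansion of $X^i(\log^{\fu}-f_{\tau_p}X)^{5-i}$ showing that the coefficient of $X^k$ carries $(\log^{\fu})^{5-k}$ and $(\log^{\fu})^{6-k}$ for $k\le 4$ is exactly the bookkeeping that justifies that claim.
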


\begin{proof}
    The statement about the degreelwas verified using SageMath \cite{sagemath} (see our  \href{https://github.com/Ariel-Z-Weiss/polylog-chabauty-kim}{GitHub repository} for the code). It is then clear from $\eqref{eq:nu6}$ that $\nu_6(X)$ is a multiple of $\log^{\fu}$. 
\end{proof}

\subsection{The Chabauty--Kim function $F^{\vert 2\vert}_{6,18}$}

Writing
\begin{align*}
    \nu_{4}(X) & = a_3X^2 +a_4X + a_{5} \in \oh(U_S)[\log^{\fu}, \Li_1^{\fu}, \ldots, \Li_4^{\fu}] \\
    \nu_{6}(X) & = b_3X^4+b_4X^3+b_5X^2+b_6X + b_{7} \in \oh(U_S)[\log^{\fu}, \Li_1^{\fu}, \ldots, \Li_6^{\fu}],
\end{align*}
one way to express their resultant is as the determinant of the following $6\times 6$ matrix with coefficients in $\oh(U_S)$:
\begin{equation}\label{resmatrixassu}
\mathrm{Res}(\nu_4(X), \nu_6(X))=
\left\vert
\begin{matrix}
a_3 & 0 & 0 & 0 & b_3 & 0 \\
a_4 & a_3 & 0 & 0 & b_4 & b_3 \\
a_5 & a_4 & a_3 & 0 & b_5 & b_4 \\
0 & a_5 & a_4 & a_3 & b_6 & b_5 \\
0 & 0 & a_5 & a_4 & b_7 & b_6 \\
0 & 0 & 0 & a_5 & 0 & b_7 \\
\end{matrix}
\right\vert.
\end{equation}

The indexing of the coefficients of the polynomials is such that $a_i, b_j \in \oh(U_S)[\log^{\fu}, \Li_1^{\fu}, \ldots, \Li_6^{\fu}]$ are homogeneous of degrees $i$ and $j$, respectively. This is clear from equations \eqref{eq:nu4} and \eqref{eq:nu6}. Note that $\mathrm{Res}(\nu_4(X), \nu_6(X))$ is homogeneous of degree $\deg((a_3)^4(b_7)^2)=12+14=26$. But all the coefficients $a_i$ and $b_j$ are multiples of $\log^{\fu}$ by Lemmas \ref{lem:4cst2} and \ref{lem:4cst}. Writing $a_i=a_i'\log^{\fu}$ and $b_j=b_j'\log^{\fu}$, we observe that 
\begin{equation}\label{resmatrixassu2}
\mathrm{Res}(\nu_4(X), \nu_6(X))=(\log^{\fu})^6
\left\vert
\begin{matrix}
a'_3 & 0 & 0 & 0 & b'_3 & 0 \\
a'_4 & a'_3 & 0 & 0 & b'_4 & b'_3 \\
a'_5 & a'_4 & a'_3 & 0 & b'_5 & b'_4 \\
0 & a'_5 & a'_4 & a'_3 & b'_6 & b'_5 \\
0 & 0 & a'_5 & a'_4 & b'_7 & b'_6 \\
0 & 0 & 0 & a'_5 & 0 & b'_7 \\
\end{matrix}
\right\vert,
\end{equation}
where the latter resultant is of degree $20$. 

\begin{lemma}\label{prop:34}
    The coefficients $a_3'$ and $b_3'$ are multiples of $F^{\vert 1\vert}_{2,2}$.
\end{lemma}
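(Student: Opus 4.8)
The plan is to extract the leading coefficients $a_3$ (the coefficient of $X^2$ in $\nu_4(X)$) and $b_3$ (the coefficient of $X^4$ in $\nu_6(X)$) from the explicit formulas \eqref{eq:nu4} and \eqref{eq:nu6}, and then to reduce the divisibility statement to a finite identity in the shuffle algebra $\oh(U_S)$.

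\emph{Locating the leading coefficient.} In \eqref{eq:nu4} the first three summands have $X$-degree at most $1$, so the coefficient of $X^2$ comes entirely from the final double sum; in particular neither $\Li_3^{\fu}$ nor $\Li_4^{\fu}$ occurs in $a_3$. Expanding the factors $X^i(\log^{\fu}-f_{\tau_p}X)^{3-i}$ by the binomial theorem and collecting the coefficient of $X^2$, one checks directly that it is divisible by $\log^{\fu}$ (this re-proves the divisibility assertion of \Cref{lem:4cst2}), so we write $a_3=a_3'\log^{\fu}$. Bookkeeping of the surviving monomials then shows that the only monomials in $\log^{\fu},\Li_1^{\fu},\Li_2^{\fu}$ that can appear in $a_3'$ are $\Li_2^{\fu}$ and $\log^{\fu}\Li_1^{\fu}$: the $\Li_1^{\fu}$-part of each bracket always gets multiplied by an extra power of $\log^{\fu}$ supplied by the binomial factor, while its $\Li_2^{\fu}$- and $F^{\vert 1\vert}_{2,2}$-parts contribute only $\log^{\fu}\Li_2^{\fu}$ and $\log^{\fu}F^{\vert 1\vert}_{2,2}$, and no $(\log^{\fu})^2$ or $(\Li_1^{\fu})^2$ term can occur. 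Hence $a_3'=D\,\Li_2^{\fu}+B\,\log^{\fu}\Li_1^{\fu}$ for uniquely determined homogeneous $D,B\in\oh(U_S)$. The same analysis applied to \eqref{eq:nu6}, whose first four summands have $X$-degree at most $3$ (so that $\Li_3^{\fu},\Li_5^{\fu},\Li_6^{\fu}$ do not occur in $b_3$), yields $b_3=b_3'\log^{\fu}$ with $b_3'=D'\,\Li_2^{\fu}+B'\,\log^{\fu}\Li_1^{\fu}$, $D',B'\in\oh(U_S)$.

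\emph{Reduction to shuffle identities.} Using $\Li_2^{\fu}=F^{\vert 1\vert}_{2,2}+\tfrac12\log^{\fu}\Li_1^{\fu}$ we get $a_3'=D\,F^{\vert 1\vert}_{2,2}+(B+\tfrac12 D)\log^{\fu}\Li_1^{\fu}$. Since $F^{\vert 1\vert}_{2,2}$ is monic of degree $1$ in $\Li_2^{\fu}$, hence irreducible in the polynomial ring $\oh(U_S)[\log^{\fu},\Li_1^{\fu},\Li_2^{\fu}]$, and divides neither $\log^{\fu}$ nor $\Li_1^{\fu}$, an element $c\,\log^{\fu}\Li_1^{\fu}$ with $c\in\oh(U_S)$ lies in the ideal $(F^{\vert 1\vert}_{2,2})$ only when $c=0$. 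Therefore $a_3'$ is a multiple of $F^{\vert 1\vert}_{2,2}$ if and only if $2B+D=0$, and likewise $b_3'$ is a multiple of $F^{\vert 1\vert}_{2,2}$ if and only if $2B'+D'=0$. It thus remains to read off $B,D$ from \eqref{eq:nu4} and $B',D'$ from \eqref{eq:nu6} — each is an $\oh(U_S)$-linear combination, indexed by the subsets $I\subseteq\{2,3,4\}$ (resp.\ $I\subseteq\{2,\dots,6\}$), of products of elements of the abstract shuffle basis built from $f_{\tau_p},f_{\tau_q},f_{\sigma_3}$ (and $f_{\sigma_5}$ in the case of $b_3$) — and to check the vanishings $2B+D=0$ and $2B'+D'=0$.

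The main obstacle is verifying these two identities in $\oh(U_S)$. Each unwinds into a bookkeeping computation with the shuffle relations ($f_{\tau_p}f_{\tau_q}=f_{\tau_p\tau_q}+f_{\tau_q\tau_p}$, $f_{\tau_p\tau_q}f_{\tau_p}=f_{\tau_p^2\tau_q}+f_{\tau_p\tau_q\tau_p}$, $f_{\sigma_3}f_\tau=f_{\sigma_3\tau}+f_{\tau\sigma_3}$, and so on), organised over the finitely many index sets $I$ occurring in \eqref{eq:nu4} and \eqref{eq:nu6}. One can either carry this out by hand — grouping the sums according to $|I|$ and exploiting the recursive way in which \Cref{lem:1p}, \Cref{lem:1q} and \Cref{eq:detM} feed into the elimination — or verify it on a computer, as was done for the degree assertions in \Cref{lem:4cst2} and \Cref{lem:4cst}. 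Granting these identities, we obtain $a_3'=D\,F^{\vert 1\vert}_{2,2}$ and $b_3'=D'\,F^{\vert 1\vert}_{2,2}$, which proves the lemma.
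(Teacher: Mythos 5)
Your structural reduction is correct and is genuinely sharper than what the paper records. You rightly observe that the coefficient of $X^2$ in \eqref{eq:nu4} (resp.\ of $X^4$ in \eqref{eq:nu6}) receives contributions only from the final double sum, that expanding $X^i(\log^{\fu}-f_{\tau_p}X)^{3-i}$ (resp.\ the exponent $5-i$) forces the quotient by $\log^{\fu}$ to lie in the $\oh(U_S)$-span of $\Li_2^{\fu}$ and $\log^{\fu}\Li_1^{\fu}$, and that divisibility by $F^{\vert 1\vert}_{2,2}$ is then equivalent, by a correct degree argument, to the single linear relation $2B+D=0$ (resp.\ $2B'+D'=0$) among shuffle-algebra coefficients. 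This isolates exactly what must be checked, which the paper's one-line proof does not do.

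The gap is that you never verify the identities $2B+D=0$ and $2B'+D'=0$, and these are the entire content of the lemma. The quantities $B, D, B', D'$ are specific $\oh(U_S)$-linear combinations, indexed over the subsets $I$, of products of shuffle-basis elements produced by the multi-stage elimination (Lemma \ref{eq:detM}, Lemmas \ref{lem:1p} and \ref{lem:1q}, and the substitution \eqref{eq:theta log for two primes'}); there is no a priori structural reason for the relations to hold, and indeed the paper treats the appearance of the factor $F^{\vert 1\vert}_{2,2}$ as an observed rather than predicted phenomenon. Writing that the check "can be carried out by hand \dots or verified on a computer" defers precisely the step that constitutes the proof. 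The paper's own argument is a direct SageMath verification of the divisibility; yours bottoms out in the same kind of computation, merely repackaged as two scalar identities in $\oh(U_S)$. To complete the proof you would need to exhibit $B, D, B', D'$ explicitly and carry the shuffle-relation manipulations through to the stated vanishing (or actually run the computation), which you have not done.
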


\begin{proof}
    We have verified this in SageMath \cite{sagemath}. See our  \href{https://github.com/Ariel-Z-Weiss/polylog-chabauty-kim}{GitHub repository} for the code.
\end{proof}

\begin{definition}\label{def:F618}
    Define $F^{\vert 2\vert}_{6,18}\in \oh(U_S)[\log^{\fu}, \Li_1^{\fu}, \ldots, \Li_6^{\fu}]$ by 
    \[
    \mathrm{Res}(\nu_4(X), \nu_6(X))=(\log^{\fu})^6 F^{\vert 1\vert}_{2,2} F^{\vert 2\vert}_{6,18}.
    \]
\end{definition}

\begin{theorem}\label{prop:F5} 
The polynomial $F^{\vert 2\vert}_{6,18}$ is a non-trivial polylogarithmic motivic Chabauty--Kim function of depth $6$ and degree $18$.
\end{theorem}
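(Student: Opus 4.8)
The plan is to establish, in turn, that $F^{\vert 2\vert}_{6,18}$ lies in $\ker(\theta^{\#})$, that it has the stated depth and degree, and that it is non-zero; the last point is where the real work lies.

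Membership in the kernel is formal. By \Cref{eq: resultant vanishes} we have $\mathrm{Res}(\nu_4(X),\nu_6(X))\in\ker(\theta^{\#})$. The target $\oh(U_S)[\Phi]$ of the $\oh(U_S)$-algebra homomorphism $\theta^{\#}$ is a polynomial ring over $\oh(U_S)\cong\Q[\mathcal{L}]$, hence an integral domain. Applying $\theta^{\#}$ to the relation of \Cref{def:F618} gives
\[
\theta^{\#}(\log^{\fu})^{6}\cdot\theta^{\#}\!\left(F^{\vert 1\vert}_{2,2}\right)\cdot\theta^{\#}\!\left(F^{\vert 2\vert}_{6,18}\right)=0 .
\]
Since $\theta^{\#}(\log^{\fu})=f_{\tau_p}\Phi_{e_0}^{\tau_p}+f_{\tau_q}\Phi_{e_0}^{\tau_q}\neq 0$ and $\theta^{\#}(F^{\vert 1\vert}_{2,2})\neq 0$ (the remark after \Cref{lem:1p}), the domain property forces $\theta^{\#}(F^{\vert 2\vert}_{6,18})=0$.

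For well-definedness, depth, and degree: by \Cref{lem:4cst2} and \Cref{lem:4cst} every entry of the Sylvester matrix \eqref{resmatrixassu} is a multiple of $\log^{\fu}$, giving the factorisation \eqref{resmatrixassu2}; by \Cref{prop:34} both entries of the first row of \eqref{resmatrixassu2} lie in the ideal $(F^{\vert 1\vert}_{2,2})$, so, the determinant being linear in that row, $F^{\vert 1\vert}_{2,2}$ divides $\mathrm{Res}(\nu_4,\nu_6)/(\log^{\fu})^{6}$, and $F^{\vert 2\vert}_{6,18}$ is a genuine element of $\oh(U_S)[\log^{\fu},\Li_1^{\fu},\dots,\Li_6^{\fu}]$ because $F^{\vert 1\vert}_{2,2}$ is a non-zero-divisor. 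The only free generators of $\Lie(U_S)$ occurring in the coefficients of $\nu_4$ and $\nu_6$ (see \eqref{eq:nu4} and \eqref{eq:nu6}) are $\tau_p,\tau_q,\sigma_3,\sigma_5$, all of degree $\geq -6$, so all coefficients of $\mathrm{Res}(\nu_4,\nu_6)$, hence of $F^{\vert 2\vert}_{6,18}$, lie in $\oh(U_S)_{\leq 6}$; thus $F^{\vert 2\vert}_{6,18}\in\oh(U_S)_{\leq 6}[\log^{\fu},\Li_1^{\fu},\dots,\Li_6^{\fu}]$. Finally, $\mathrm{Res}(\nu_4,\nu_6)$ is homogeneous of degree $26$ (the computation preceding \Cref{def:F618}) and $(\log^{\fu})^{6}F^{\vert 1\vert}_{2,2}$ of degree $8$, so $F^{\vert 2\vert}_{6,18}$ is homogeneous of degree $18$. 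Together with the vanishing $\theta^{\#}(F^{\vert 2\vert}_{6,18})=0$ established above, this shows $F^{\vert 2\vert}_{6,18}\in\mathcal{I}^{\PL}_{S,6}$ is a polylogarithmic motivic Chabauty--Kim function of depth $6$ and degree $18$.

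It remains to prove $F^{\vert 2\vert}_{6,18}\neq 0$, equivalently $\mathrm{Res}(\nu_4,\nu_6)\neq 0$; this is the main obstacle, since a direct computation of the resultant is infeasible and a priori it might vanish. The idea is to track the $\Li_6^{\fu}$-content. From \eqref{eq:nu4} and \eqref{eq:nu6}, $\Li_6^{\fu}$ enters the coefficients of $\nu_4$ and $\nu_6$ only through the single term $-\frac{1}{2} f_{\tau_q}^{6}f_{\sigma_3}f_{\sigma_5}f_{[\tau_q,\tau_p]}\log^{\fu}\Li_6^{\fu}$ of $\nu_6$, which is part of the constant term $b_7$. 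In \eqref{resmatrixassu} the entry $b_7$ appears only at positions $(5,5)$ and $(6,6)$, so $\mathrm{Res}(\nu_4,\nu_6)$ has degree at most $2$ in $\Li_6^{\fu}$, and by multilinearity of the determinant its $(\Li_6^{\fu})^{2}$-coefficient equals $a_3^{4}$ — the upper-left $4\times 4$ minor of \eqref{resmatrixassu} — times the square of $-\frac{1}{2} f_{\tau_q}^{6}f_{\sigma_3}f_{\sigma_5}f_{[\tau_q,\tau_p]}\log^{\fu}$. Both factors are non-zero: $a_3\neq 0$ as $\nu_4$ has $X$-degree exactly $2$ by \Cref{lem:4cst2}, and the second factor is a product of non-zero elements of the domain $\oh(U_S)[\log^{\fu}]$. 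Hence $\mathrm{Res}(\nu_4,\nu_6)\neq 0$, and since $(\log^{\fu})^{6}F^{\vert 1\vert}_{2,2}$ involves no $\Li_6^{\fu}$, we even get $\deg_{\Li_6^{\fu}}F^{\vert 2\vert}_{6,18}=2$, so $F^{\vert 2\vert}_{6,18}$ is non-zero and genuinely of depth $6$. (Alternatively, $F^{\vert 2\vert}_{6,18}\neq 0$ is immediate from its explicit SageMath expression.)
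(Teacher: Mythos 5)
Your proof is correct and follows essentially the same route as the paper: non-triviality is established exactly as in the paper's argument, by observing that the only contribution to the $(\Li_6^{\fu})^2$-part of the resultant comes from the diagonal product $a_3^4 b_7^2$, yielding the non-zero term $\tfrac{1}{4}a_3^4 f_{\tau_q}^{12}(f_{\sigma_3}f_{\sigma_5}f_{[\tau_q,\tau_p]})^2(\log^{\fu})^2(\Li_6^{\fu})^2$. The extra details you supply --- kernel membership via the integral-domain property of $\oh(U_S)[\Phi]$, divisibility by $F^{\vert 1\vert}_{2,2}$ from linearity of the determinant in the first row of \eqref{resmatrixassu2}, and the exactness of the depth --- are correct elaborations of points the paper treats as immediate.
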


\begin{proof}
    Since $\mathrm{Res}(\nu_4(X), \nu_6(X))$ has degree $26$, it is clear that the degree of $F^{\vert 2\vert}_{6,18}$ is $18$.
    In view of Definition \ref{def:F618} and Proposition \ref{eq: resultant vanishes}, the only part of the statement that requires justification is the non-triviality. 
    It suffices to prove the non-triviality of $\mathrm{Res}(\nu_4(X), \nu_6(X))$. We observe from \eqref{eq:nu6} that the constant coefficients of $\nu_6(X)$ is of the form:
    \[
    b_7=-\frac{1}{2}f_{\tau_q}^6 f_{\sigma_3} f_{\sigma_5}f_{[\tau_q,\tau_p]} \log^{\fu} \Li^{\fu}_6 + \beta, \qquad \text{ for some } \beta \in \oh(U_S)[\log^{\fu}, \Li_1^{\fu}, \ldots, \Li_5^{\fu}]_7.
    \]
    In particular, we see that $b_7\neq 0$.
    Moreover, it is clear from \eqref{eq:nu6} that $b_j\in \oh(U_S)[\log^{\fu}, \Li_1^{\fu}, \ldots, \Li_5^{\fu}]$ for $3\leq j\leq 6$. In other words, only the constant term of $\nu_6(X)$ involves the variable $\Li^{\fu}_6$. On the other hand, the leading coefficient of $\nu_4(X)$ is the explicit non-zero quantity $a_3$ that can be read off \eqref{eq:nu4}. When writing out the determinant \eqref{resmatrixassu} as a cofactor expansion, we see that $\mathrm{Res}(\nu_4(X), \nu_6(X))\in \oh(U_S)[\log^{\fu}, \Li_1^{\fu}, \ldots, \Li_6^{\fu}]$ contains the following term arising from the diagonal product $(a_3)^4(b_7)^2$:  
    \[
    \frac{1}{4}a_3^4 f_{\tau_q}^{12} (f_{\sigma_3} f_{\sigma_5}f_{[\tau_q,\tau_p]})^2 (\log^{\fu})^2(\Li_6^{\fu})^2.
    \]
    By the above remarks, the diagonal contribution to the determinant is the only term which is a multiple of $(\Li_6^{\fu})^2$. Since $a_3\neq 0$ by Lemma \ref{lem:4cst2}, it follows that $\mathrm{Res}(\nu_4(X), \nu_6(X))\neq 0$.
\end{proof}

\subsection{Chabauty--Kim functions of higher degree}

We have constructed polynomials $P_{4}(X)$ and $P_{6}(X)$, as well as $\nu_{4}(X)$ and $\nu_{6}(X)$, using as starting point the equations \eqref{eq:theta Lin for two primes pair} for $\theta^{\#}(\Li^{\fu}_{4})$ and $\theta^{\#}(\Li^{\fu}_{6})$. More generally, a similar process of elimination as in \Cref{s:elim} would give rise, for any $d\geq 2$, to polynomials $P_{2d}(X) \in \theta^{\#}(\oh(U_S)[\log^{\fu}, \Li_1^{\fu}, \ldots, \Li_{2d}^{\fu}])$ and $\nu_{2d}(X) \in \oh(U_S)[\log^{\fu}, \Li_1^{\fu}, \ldots, \Li_{2d}^{\fu}]$ such that $P_{2d}(\Phi_{e_0}^{\tau_p})=0$ and $P_{2d}(X)=\theta^{\#}(\nu_{2d}(X))$. Given $d_1 > d_2 \geq 2$, we then obtain polylogarithmic motivic Chabauty--Kim functions
$$ \mathrm{Res}(\nu_{2d_1}(X),\nu_{2d_2}(X))\in \cI^{\PL}_{S, 2d_1}.$$
A similar proof as in Theorem \ref{prop:F5} then guarantees that $$\mathrm{Res}(\nu_{2d_1}(X),\nu_{2d_2}(X))\neq 0.$$
The resultant method for $\vert S\vert=2$ thus provides
an infinite family of Chabauty--Kim functions.
In practice, these other functions will be of high degree and the complexity of the arithmetic step will increase many-fold. \footnote{We expect the degree of $\nu_{2d}(X)$ to be $2d-2$. In fact, we checked this for $d=2, 3, 4, 5$ in SageMath \cite{sagemath}. We also verified that the leading coefficient of $\nu_{2d}(X)$ is a multiple of $F^{\vert 1\vert}_{2,2}$ for $d=2,3,4,5$.} 

\section{Towards the multiple primes case}\label{s:multiple}

We briefly outline a strategy using resultants to tackle the geometric step for a set $S$ with no restriction on its size. 

Let $s=\vert S\vert$. We then have $\Sigma_{-1} = \{\tau_{p_{1}},\ldots,\tau_{p_{s}}\}$. The map $\theta^{\#}$ is given by the usual formulas in Definition \ref{theta}.
The system of equations is therefore similar to the one in \Cref{s:equations}, except that the sums that are indexed by $\Sigma_{-1}$ now have $s$ terms. In particular, the system is still linear with respect to $\Phi^{\sigma_{2n-1}}_{e_{1}e_{0}^{2n-2}}$ and the $\Phi_{e_{1}}^{\tau}$'s, but now the number of variables $\Phi_{e_{0}}^{\tau}$ is $s$, as is the number of variables $\Phi_{e_{1}}^{\tau}$.

A generalisation of the elimination process described in \Cref{s:elim} goes as follows. Take as starting point the equation for $\theta^{\#}(\Li^{\fu}_{2d})$ and proceed to eliminate the variables $\Phi^{\sigma_{2n-1}}_{e_{1}e_{0}^{2n-2}}$ with $n\leq d$ by induction on $n$, starting with $n=d$. Then proceed to invert a linear system to eliminate the $\Phi_{e_{1}}^{\tau}$'s, provided that it is invertible. It is unclear whether the determinant of the corresponding matrix will have as nice a shape as the one in Lemma \ref{eq:detM}.  
After inverting the system, only the $s$ variables $\Phi_{e_{0}}^{\tau}$ remain. In order to eliminate these, proceed by induction on $n$ as follows. 

Assume that we have polynomials $P_{1},\ldots,P_{s} \in \oh(U_S)[X_{1},\ldots,X_{s}]$ with a common root 
$(x_{1},\ldots,x_{s})$. Then for each $2 \leq i \leq n$, we have 
\[
\mathrm{Res}_{X_{s}}(P_{1}(x_{1},\ldots,x_{s-1},X_{s}), P_{i}(x_{1},\ldots,x_{s-1},X_{s})) =0.
\]
In particular, the polynomials 
\[
Q_{i}(X_{1},\ldots,X_{s-1}) = \mathrm{Res}_{X_{s}}(P_{1}(X_{1},\ldots,X_{s-1},X_{s}), P_{i}(X_{1},\ldots,X_{s-1},X_{s}))
\]
have the common root $(x_{1},\ldots,x_{s-1})$. Proceed by induction from here, by iteratively taking resultants of resultants. 
The outcome of this procedure is an element in $\ker(\theta^{\#})$. Verifying the non-triviality of such a function seems non-trivial when $s>2$ (in Theorem \ref{prop:F5} we proved it in the case $s=2$). Moreover, in the case $s=2$ we noticed that $\Res(\nu_4(X), \nu_6(X))$ is a multiple of $(\log^{\fu})^6 F^{\vert 1\vert}_{2,2}$, an observation that allowed us to extract a polylogarithmic Chabauty--Kim function of lower degree. We do not know if a similar phenomenon happens when $s>2$.

\begin{remark}
Alternatively, one could use the Macaulay resultant, which applies directly to a family of any number of polynomials (and not just two polynomials). This remark is also relevant in the $s=2$ case. However, since all the steps of the elimination in the case $s=2$ are linear except for the elimination step of $\Phi_{e_0}^{\tau_p}$, the method presented using classical resultants seemed the most natural choice.
\end{remark}

%%%%%%%%%%%%%%%%%%%%%%%%%%%%%%%%%%%%%%%%%%%%%%%%%%%%%%%%%%%%

\begin{bibdiv}
\begin{biblist}

\bib{BDCKW}{article}{
      author={Balakrishnan, Jennifer~S.},
      author={Dan-Cohen, Ishai},
      author={Kim, Minhyong},
      author={Wewers, Stefan},
       title={A non-abelian conjecture of {T}ate-{S}hafarevich type for
  hyperbolic curves},
        date={2018},
        ISSN={0025-5831,1432-1807},
     journal={Math. Ann.},
      volume={372},
      number={1-2},
       pages={369\ndash 428},
         url={https://doi.org/10.1007/s00208-018-1684-x},
      review={\MR{3856816}},
}

\bib{BB+}{article}{
      author={Best, Alex~J.},
      author={Betts, L.~Alexander},
      author={Kumpitsch, Theresa},
      author={L\"udtke, Martin},
      author={McAndrew, Angus~W.},
      author={Qian, Lie},
      author={Studnia, Elie},
      author={Xu, Yujie},
       title={Refined {S}elmer equations for the thrice-punctured line in depth
  two},
        date={2024},
        ISSN={0025-5718,1088-6842},
     journal={Math. Comp.},
      volume={93},
      number={347},
       pages={1497\ndash 1527},
         url={https://doi.org/10.1090/mcom/3898},
      review={\MR{4709209}},
}

\bib{bettsdogra}{article}{
      author={Betts, L.~Alexander},
      author={Dogra, Netan},
       title={The local theory of unipotent {K}ummer maps and refined {S}elmer
  schemes},
        date={2020},
      eprint={1909.05734},
         url={https://arxiv.org/abs/1909.05734},
}

\bib{BKL24}{article}{
      author={Betts, L.~Alexander},
      author={Kumpitsch, Theresa},
      author={Lüdtke, Martin},
       title={{C}habauty-{K}im and the {S}ection {C}onjecture for locally
  geometric sections},
        date={2024},
      eprint={2305.09462},
         url={https://arxiv.org/abs/2305.09462},
}

\bib{borel}{article}{
      author={Borel, Armand},
       title={Stable real cohomology of arithmetic groups},
        date={1974},
        ISSN={0012-9593},
     journal={Ann. Sci. \'Ecole Norm. Sup. (4)},
      volume={7},
       pages={235\ndash 272},
         url={http://www.numdam.org/item?id=ASENS_1974_4_7_2_235_0},
      review={\MR{387496}},
}

\bib{perp}{article}{
      author={Chatzistamatiou, Andre},
      author={\"Unver, Sinan},
       title={On {$p$}-adic periods for mixed {T}ate motives over a number
  field},
        date={2013},
        ISSN={1073-2780,1945-001X},
     journal={Math. Res. Lett.},
      volume={20},
      number={5},
       pages={825\ndash 844},
         url={https://doi.org/10.4310/MRL.2013.v20.n5.a2},
      review={\MR{3207355}},
}

\bib{CDC1}{article}{
      author={Corwin, David},
      author={Dan-Cohen, Ishai},
       title={The polylog quotient and the {G}oncharov quotient in
  computational {C}habauty--{K}im {T}heory {I}},
        date={2020},
        ISSN={1793-0421,1793-7310},
     journal={Int. J. Number Theory},
      volume={16},
      number={8},
       pages={1859\ndash 1905},
         url={https://doi.org/10.1142/S1793042120500967},
      review={\MR{4143688}},
}

\bib{DC}{article}{
      author={Dan-Cohen, Ishai},
       title={Mixed {T}ate motives and the unit equation {II}},
        date={2020},
        ISSN={1937-0652,1944-7833},
     journal={Algebra Number Theory},
      volume={14},
      number={5},
       pages={1175\ndash 1237},
         url={https://doi.org/10.2140/ant.2020.14.1175},
      review={\MR{4129385}},
}

\bib{CDC2}{article}{
      author={Dan-Cohen, Ishai},
      author={Corwin, David},
       title={The polylog quotient and the {G}oncharov quotient in
  computational {C}habauty-{K}im theory {II}},
        date={2020},
        ISSN={0002-9947,1088-6850},
     journal={Trans. Amer. Math. Soc.},
      volume={373},
      number={10},
       pages={6835\ndash 6861},
         url={https://doi.org/10.1090/tran/7964},
      review={\MR{4155193}},
}

\bib{DCJ}{article}{
      author={Dan-Cohen, Ishai},
      author={Jarossay, David},
       title={{$M_{0,5}$}: toward the {C}habauty-{K}im method in higher
  dimensions},
        date={2023},
        ISSN={0025-5793,2041-7942},
     journal={Mathematika},
      volume={69},
      number={4},
       pages={1011\ndash 1059},
      review={\MR{4627907}},
}

\bib{DCW1}{article}{
      author={Dan-Cohen, Ishai},
      author={Wewers, Stefan},
       title={Explicit {C}habauty-{K}im theory for the thrice punctured line in
  depth 2},
        date={2015},
        ISSN={0024-6115,1460-244X},
     journal={Proc. Lond. Math. Soc. (3)},
      volume={110},
      number={1},
       pages={133\ndash 171},
         url={https://doi.org/10.1112/plms/pdu034},
      review={\MR{3299602}},
}

\bib{DCW2}{article}{
      author={Dan-Cohen, Ishai},
      author={Wewers, Stefan},
       title={Mixed {T}ate motives and the unit equation},
        date={2016},
        ISSN={1073-7928,1687-0247},
     journal={Int. Math. Res. Not. IMRN},
      number={17},
       pages={5291\ndash 5354},
         url={https://doi.org/10.1093/imrn/rnv239},
      review={\MR{3556439}},
}

\bib{del}{incollection}{
      author={Deligne, Pierre},
       title={Le groupe fondamental de la droite projective moins trois
  points},
        date={1989},
   booktitle={Galois groups over {${\bf Q}$} ({B}erkeley, {CA}, 1987)},
      series={Math. Sci. Res. Inst. Publ.},
      volume={16},
   publisher={Springer, New York},
       pages={79\ndash 297},
         url={https://doi.org/10.1007/978-1-4613-9649-9_3},
      review={\MR{1012168}},
}

\bib{delignegoncharov}{article}{
      author={Deligne, Pierre},
      author={Goncharov, Alexander~B.},
       title={Groupes fondamentaux motiviques de {T}ate mixte},
        date={2005},
        ISSN={0012-9593},
     journal={Ann. Sci. \'Ecole Norm. Sup. (4)},
      volume={38},
      number={1},
       pages={1\ndash 56},
         url={https://doi.org/10.1016/j.ansens.2004.11.001},
      review={\MR{2136480}},
}

\bib{kim}{article}{
      author={Kim, Minhyong},
       title={The motivic fundamental group of
  {$\mathbf{P}^1\setminus\{0,1,\infty\}$} and the theorem of {S}iegel},
        date={2005},
        ISSN={0020-9910,1432-1297},
     journal={Invent. Math.},
      volume={161},
      number={3},
       pages={629\ndash 656},
         url={https://doi.org/10.1007/s00222-004-0433-9},
      review={\MR{2181717}},
}

\bib{levineTM}{incollection}{
      author={Levine, Marc},
       title={Tate motives and the vanishing conjectures for algebraic
  {$K$}-theory},
        date={1993},
   booktitle={Algebraic {$K$}-theory and algebraic topology ({L}ake {L}ouise,
  {AB}, 1991)},
      series={NATO Adv. Sci. Inst. Ser. C: Math. Phys. Sci.},
      volume={407},
   publisher={Kluwer Acad. Publ., Dordrecht},
       pages={167\ndash 188},
         url={https://doi.org/10.1007/978-94-017-0695-7_7},
      review={\MR{1367296}},
}

\bib{martin}{article}{
      author={Lüdtke, Martin},
       title={Refined {C}habauty--{K}im computations for the thrice-punctured
  line over $\mathbb{Z}[1/6]$},
        date={2025},
     journal={Res. Number Theory},
      volume={11},
      number={1},
      PAGES = {Paper No. 24},
        url={https://doi.org/10.1007/s40993-024-00597-4},
         review = {\MR{4848883}},
}

\bib{LYNDON}{article}{
      author={Melan\c{c}on, Guy},
      author={Reutenauer, Christophe},
       title={Lyndon words, free algebras and shuffles},
        date={1989},
        ISSN={0008-414X,1496-4279},
     journal={Canad. J. Math.},
      volume={41},
      number={4},
       pages={577\ndash 591},
         url={https://doi.org/10.4153/CJM-1989-025-2},
      review={\MR{1012617}},
}

\bib{Rivano2}{book}{
      author={Saavedra~Rivano, Neantro},
       title={Cat\'egories {T}annakiennes},
      series={Lecture Notes in Mathematics},
   publisher={Springer-Verlag, Berlin-New York},
        date={1972},
      volume={Vol. 265},
      review={\MR{338002}},
}

\bib{sagemath}{manual}{
      author={{The Sage Developers}},
       title={{S}agemath, the {S}age {M}athematics {S}oftware {S}ystem
  ({V}ersion 10.3.2)},
        date={2024},
        note={{\tt https://www.sagemath.org}},
}

\end{biblist}
\end{bibdiv}

\end{document}